\date{}
\newtheorem{theo}{Theorem}[section]
\newtheorem{lemm}[theo]{Lemma}
\newtheorem{prop}[theo]{Proposition}
\newtheorem{cor}[theo]{Corollary}
\theoremstyle{definition}
\newtheorem{defi}[theo]{Definition}
\newtheorem{rem}[theo]{Remark}
\newtheorem{exam}[theo]{Example}
\newtheorem{assum}{Assumption}
\newcommand{\ep}{\varepsilon}
\newcommand{\relmiddle}[1]{\mathrel{}\middle#1\mathrel{}}
\newcommand{\1}{\mbox{\rm{1}}\hspace{-0.25em}\mbox{\rm{l}}}
\newcommand{\pert}{{\tau,\ep}}
\providecommand{\keywords}[1]{\textbf{Keywords:} #1}
\def\widebar{\accentset{{\cc@style\underline{\mskip10mu}}}}
\numberwithin{equation}{section}
\def\rnum#1{\expandafter{\romannumeral #1}} 
\def\Rnum#1{\uppercase\expandafter{\romannumeral #1}}
\title{Extended backward stochastic Volterra integral equations and their applications to time-inconsistent stochastic recursive control problems\thanks{This is a pre-copy-editing, author-produced PDF of an article accepted for publication in \emph{Math.\ Control Relat.\ Fields} following peer review. The definitive publisher-authenticated version is available online at: \url{https://www.aimsciences.org/article/doi/10.3934/mcrf.2020043}.}}
\author{Yushi Hamaguchi\thanks{Department of Mathematics, Kyoto University, Kyoto 606--8502, Japan, \href{mailto:hamaguchi@math.kyoto-u.ac.jp}{hamaguchi@math.kyoto-u.ac.jp}}}
\begin{document}
\maketitle


\begin{abstract}
In this paper, we study extended backward stochastic Volterra integral equations (EBSVIEs, for short). We establish the well-posedness under weaker assumptions than those of known results, and prove a new kind of regularity property for the solutions. As an application, we investigate, in the open-loop framework, a time-inconsistent stochastic recursive control problem where the cost functional is defined by the solution to a backward stochastic Volterra integral equation (BSVIE, for short). We show that the corresponding adjoint equations become EBSVIEs, and provide a necessary and sufficient condition for an open-loop equilibrium control via variational methods.
\end{abstract}

\keywords{Extended backward stochastic Volterra integral equation; backward stochastic Volterra integral equation; time-inconsistent stochastic recursive control problem; open-loop equilibrium control}


\section{Introduction}

Throughout this paper, we let $W(\cdot)$ be a $d$-dimensional Brownian motion on a complete probability space $(\Omega,\mathcal{F},\mathbb{P})$. $\mathbb{F}=(\mathcal{F}_t)_{t\geq0}$ denotes the $\mathbb{P}$-augmentation of the filtration generated by $W(\cdot)$. Let $0\leq S<T<\infty$ be fixed. In this paper, we study, together with an application to stochastic control, the following \emph{extended backward stochastic Volterra integral equation} (EBSVIE, for short):
\begin{equation}\label{EBSVIE}
\begin{split}
	&Y(t,s)=\psi(t)+\int^T_sg(t,r,Y(r,r),Y(t,r),Z(t,r))\,dr-\int^T_sZ(t,r)\,dW(r),\\
	&\hspace{5cm}s\in[S,T],\ t\in[S,T],
\end{split}
\end{equation}
where $\psi:\Omega\times[S,T]\to\mathbb{R}^m$ and $g:\Omega\times[S,T]^2\times\mathbb{R}^m\times\mathbb{R}^m\times\mathbb{R}^{m\times d}\to\mathbb{R}^m$ are given maps. By an adapted solution to \eqref{EBSVIE}, we mean a pair of $\mathbb{R}^m\times\mathbb{R}^{m\times d}$-valued random fields $(Y(\cdot,\cdot),Z(\cdot,\cdot))=\{(Y(t,s),Z(t,s))\}_{(t,s)\in[S,T]^2}$ such that
\begin{itemize}
\item
the map $[S,T]^2\times\Omega\ni(t,s,\omega)\mapsto(Y(t,s,\omega),Z(t,s,\omega))\in\mathbb{R}^m\times\mathbb{R}^{m\times d}$ is measurable,
\item
for each fixed $t\in[S,T]$, the process $Y(t,\cdot)=(Y(t,s))_{s\in[S,T]}$ is continuous and $\mathbb{F}$-adapted,
\item
for each fixed $t\in[S,T]$, the process $Z(t,\cdot)=(Z(t,s))_{s\in[S,T]}$ is $\mathbb{F}$-progressively measurable, and
\item
the equality in \eqref{EBSVIE} holds a.s.\ for any $s\in[S,T]$ and $t\in[S,T]$.
\end{itemize}
We call $\psi$ the free term and $g$ the generator of EBSVIE~\eqref{EBSVIE}. If the generator $g(t,r,\eta,y,z)$ does not depend on $y$, then EBSVIE~\eqref{EBSVIE} can be seen as an integral equation for $\eta(t)=Y(t,t)$ and $\zeta(t,s)=Z(t,s)$, and it reduces to the so-called Type-\Rnum{1} \emph{backward stochastic Volterra integral equation} (BSVIE, for short) of the following form:
\begin{equation*}
	\eta(t)=\psi(t)+\int^T_tg(t,s,\eta(s),\zeta(t,s))\,ds-\int^T_t\zeta(t,s)\,dW(s),\ t\in[S,T].
\end{equation*}
If moreover $\psi$ and $g$ do not depend on $t$, then the above equation reduces to a well-known \emph{backward stochastic differential equation} (BSDE, for short) with the adapted solution $(\eta(\cdot),\zeta(\cdot))$:
\begin{equation*}
	\eta(t)=\psi+\int^T_tg(s,\eta(s),\zeta(s))\,ds-\int^T_t\zeta(s)\,dW(s),\ t\in[S,T],
\end{equation*}
which can be rewritten in the differential form:
\begin{equation*}
	\begin{cases}
		d\eta(s)=-g(s,\eta(s),\zeta(s))\,ds+\zeta(s)\,dW(s),\ s\in[S,T],\\
		\eta(T)=\psi.
	\end{cases}
\end{equation*}
\par
BSDEs have been extensively researched, and established as a fundamental object in mathematical finance and stochastic control; see for example the survey paper \cite{a_ElKaroui-_97} and the textbook \cite{b_Zhang_17}. BSVIEs were firstly studied by Lin~\cite{a_Lin_02}, and further investigated by Yong~\cite{a_Yong_06,a_Yong_08}, Shi--Wang~\cite{a_Shi-Wang_12}, Wang--Yong~\cite{a_Wang-Yong_19}, Shi--Wen--Xiong~\cite{a_Shi-Wen-Xiong_20}, and so on. BSVIEs have become a popular tool for studying some problems in mathematical finance. Yong~\cite{a_Yong_07} applied BSVIEs to dynamic risk measures. Wang--Sun--Yong~\cite{a_Wang-Sun-Yong_19} established the well-posedness of quadratic BSVIEs, and explored the applications of quadratic BSVIEs to equilibrium dynamic risk measures and equilibrium recursive utility processes. Shi--Wang--Yong~\cite{a_Shi-Wang-Yong_15} and Wang--Zhang~\cite{a_Wang-Zhang_17} investigated optimal control problems of BSVIEs. Recently, as a generalization of BSVIEs, Wang~\cite{a_Wang_20} introduced EBSVIEs, and investigated the Feynman--Kac formula for a non-local quasilinear parabolic partial differential equation (PDE, for short). A similar equation was considered by the author's work~\cite{a_Hamaguchi_20}. The author established the well-posedness of a ``flow of forward-backward stochastic differential equations'' over small time horizon, which is a coupled system of a stochastic differential equation (SDE, for short) and an EBSVIE. In this paper, we deal with the well-posedness and a regularity property of EBSVIE~\eqref{EBSVIE} under weaker assumptions than the literature. Furthermore, we show that EBSVIEs naturally arise in a time-inconsistent stochastic recursive control problem. To the best of our knowledge, the present paper is of the first result to show the applicability of EBSVIEs to stochastic control.
\par
\medskip
In recent years, time-inconsistent stochastic control problems have received remarkable attentions in stochastic control, mathematical finance and economics. Time-inconsistency for a dynamic control problem means that the so-called Bellman's principle of optimality does not hold. In other words, a restriction of an optimal control for a specific initial pair on a later time interval might not be optimal for that corresponding initial pair. Such a situation occurs for example in dynamic mean-variance control problems, and in utility maximization problems for consumption-investment strategies under non-exponential discounting. In order to deal with a time-inconsistent problem in a sophisticated way, Strotz~\cite{a_Strotz_73} introduced an approach which regards the dynamic problem as a non-cooperative game, where decisions at every instant of time are selected by different players (which represent the incarnations of the controller). Nash equilibria are therefore considered instead of optimal controls. This approach was adopted and further developed by Bj\"{o}rk--Khapko--Murgoci~\cite{a_Bjork-_17}, Djehiche--Huang~\cite{a_Djehiche-Huang_16}, Yong~\cite{a_Yong_12,a_Yong_17}, Wei--Yong--Yu~\cite{a_Wei_17}, Yan--Yong~\cite{a_Yan-Yong_19}, Wang--Yong~\cite{a_HWang-Yong_19}, Hu--Jin--Zhou~\cite{a_Hu-Jin-Zhou_12,a_Hu-Jin-Zhou_17}, Hu--Huang--Li~\cite{a_Hu-Huang-Li_17}, Alia~\cite{a_Alia_19}, and so on. Time-inconsistent consumption-investment problems under non-exponential discounting were studied by, for example, Ekeland--Pirvu~\cite{a_Ekeland-Pirvu_08}, Alia et al.~\cite{a_Alia-_17}, and Hamaguchi~\cite{a_Hamaguchi_19}. The equilibrium concepts investigated in the literature can be roughly divided into two different types, that is, (\rnum{1}) a \emph{closed-loop equilibrium strategy} and (\rnum{2}) an \emph{open-loop equilibrium control}. Let us briefly review these two concepts.
\begin{enumerate}
\renewcommand{\labelenumi}{(\roman{enumi})}
\item
A closed-loop equilibrium strategy is an equilibrium concept for a ``decision rule'' that a controller uses to select a ``control action'' based on each state. Mathematically, a strategy is a mapping from states to control actions, which is chosen independently of initial conditions. Concerning this formulation, Yong~\cite{a_Yong_12} performed a multi-person differential game approach for a general discounting time-inconsistent stochastic control problem, and characterized the closed-loop equilibrium strategy via the so-called equilibrium Hamilton--Jacobi--Bellman (HJB, for short) equation. This approach was further developed in \cite{a_Yong_17,a_Wei_17,a_Yan-Yong_19,a_HWang-Yong_19}.
\item
An open-loop equilibrium control is an equilibrium concept for a ``control process'' that a controller chooses based on the initial condition. Hu--Jin--Zhou~\cite{a_Hu-Jin-Zhou_12,a_Hu-Jin-Zhou_17} introduced and investigated an open-loop equilibrium control for a time-inconsistent stochastic linear-quadratic control problem, together with an application to a dynamic mean-variance control problem. They characterized an open-loop equilibrium control by using a variational method, which is a natural generalization of the stochastic maximum principle of Peng~\cite{a_Peng_90} to the time-inconsistent problem. This approach was further developed in \cite{a_Hu-Huang-Li_17,a_Yong_17,a_Yan-Yong_19,a_TWang_19,a_TWang_20,a_Alia-_17,a_Alia_19,a_Hamaguchi_19}.
\end{enumerate}
For a stochastic control problem, a recursive cost functional with exponential discounting can be described by the solution of a BSDE. On the other hand, as discussed in \cite{a_HWang-Yong_19}, when we consider a stochastic recursive control problem with general (non-exponential) discounting, then the proper definition of the recursive cost functional is the solution of a Type-\Rnum{1} BSVIE. In the closed-loop framework, Wang--Yong~\cite{a_HWang-Yong_19} and Yan--Yong~\cite{a_Yan-Yong_19} (Section 5) adopted the multi-person differential game approach, and studied a time-inconsistent stochastic recursive control problem where the cost functional was defined by the solution of a Type-\Rnum{1} BSVIE. A similar problem was studied by Wei--Yong--Yu~\cite{a_Wei_17} in the closed-loop framework, where the cost functional was defined by a family of parametrized BSDEs. On the other hand, to the best of our knowledge, time-inconsistent stochastic recursive control problems have not been studied in the open-loop framework.
\par
\medskip
In Sections~\ref{section: control problem} and \ref{section: proof of main result} of the present paper, we investigate, in the open-loop framework, a time-inconsistent stochastic recursive control problem where the cost functional is defined by the solution of a Type-\Rnum{1} BSVIE. We define an open-loop equilibrium control by a similar way to \cite{a_Hu-Jin-Zhou_12,a_Hu-Jin-Zhou_17}, and characterize it via variational methods. The key point is to derive the \emph{first-order adjoint equation} and the \emph{second-order adjoint equation}. In this paper, we show that the proper choices of the adjoint equations are EBSVIEs; see equations~\eqref{EBSVIE 1} and \eqref{EBSVIE 2}. For this reason, we see that EBSVIEs are important tools to deal with a time-inconsistent stochastic recursive control problem in the open-loop framework. Our method to derive the adjoint equations is inspired by Hu~\cite{a_Hu_17}. He investigated a (time-consistent) stochastic recursive control problem with the cost functional defined by the solution to a BSDE, and developed a global maximum principle. In this paper, we generalize his idea to a time-inconsistent setting with the cost functional defined by the solution to a BSVIE. It is also worth to mention that the paper \cite{a_Hu_17} provided a necessary condition for an optimal control in a time-consistent stochastic recursive control problem, while the papers \cite{a_Yong_12,a_Wei_17,a_Yan-Yong_19,a_HWang-Yong_19} provided sufficient conditions for closed-loop equilibrium strategies in time-inconsistent stochastic recursive control problems with general discounting. Compared with the above papers, we provide a \emph{necessary and sufficient condition} for an open-loop equilibrium control in a time-inconsistent stochastic recursive control problem with general discounting. We also refer to relevant works of Wang~\cite{a_TWang_19,a_TWang_20}, where the author characterized open-loop equilibrium controls in a linear-quadratic time-inconsistent mean-field control problem (which is different from our setting) by a system of conditions named as first-order and second-order equilibrium conditions.
\par
Unfortunately, the coefficients of the adjoint equations \eqref{EBSVIE 1} and \eqref{EBSVIE 2} do not satisfy the assumptions considered in \cite{a_Wang_20}; see Remark~\ref{remark: t-continuity}. Therefore, in order to justify the arguments in Sections~\ref{section: control problem} and \ref{section: proof of main result}, we need further observations on EBSVIEs. This is a motivation of Section~\ref{section: EBSVIE}. For the sake of the well-posedness of the adjoint equations, in Section~\ref{section: EBSVIE}, we prove the well-posedness of the general EBSVIE~\eqref{EBSVIE} under weaker assumptions than the literature. We provide a direct proof which is different from the original method of \cite{a_Wang_20}. Moreover, we show a new type of regularity property of the solution $(Y(\cdot,\cdot),Z(\cdot,\cdot))=\{(Y(t,s),Z(t,s))\}_{(t,s)\in[S,T]^2}$ to an EBSVIE with respect to the $t$-variable. This regularity result plays an interesting role in the study of time-inconsistent stochastic control problems.
\par
In the studies of EBSVIEs and time-inconsistent stochastic control problems, the ``diagonal process'' $Z(s,s)$ of a process $Z(t,s)$ with two time-parameters plays a crucial role. In some previous works on time-inconsistent stochastic control problems (for example, in \cite{a_Ekeland-Pirvu_08,a_Djehiche-Huang_16}), such a ``diagonal process'' was used without rigorous discussions, although even the well-definedness is not clear and questionable. Indeed, we show a counter example (Example~\ref{example: diagonal}) which says that there exists a (deterministic) process $Z(\cdot,\cdot)$ such that the term $Z(s,s)$ cannot be defined. Due to this technical difficulty, in some time-inconsistent stochastic control problems, the full characterization of an open-loop equilibrium control has been an open problem. In Section~4.1 of \cite{a_Yong_17}, a strong assumption, that is, the a.s.\ continuity of the map $(t,s)\mapsto Z(t,s)$, was imposed in the sufficient condition for an open-loop equilibrium control, but the a.s.\ continuity is difficult to check in general. Also, in Section~4 of \cite{a_Yan-Yong_19}, the characterization of an open-loop equilibrium control remained to include a limit procedure, and hence they did not provide a full characterization in a local form. In this paper, in order to overcome such difficulties arising in the existing literature, we show some abstract results on stochastic processes with two time-parameters, and provide a useful approach to treat the ``diagonal processes''. This observation is interesting by its own right, and plays a key role in our study. Indeed, this approach helps to solve the open problem arising in the full characterization (via a necessary and sufficient condition) of the open-loop equilibrium control in a time-inconsistent stochastic control problem under reasonable assumptions.
\par
\medskip
The contributions of this paper are summarized as follows:
\begin{itemize}
\item
We establish the well-posedness of the general EBSVIE~\eqref{EBSVIE} under weaker assumptions than the literature, and prove a new kind of regularity property of the solution (Theorems~\ref{theorem: EBSVIE} and \ref{theorem: EBSVIE derivative}).
\item
We provide a necessary and sufficient condition for an open-loop equilibrium control of a time-inconsistent stochastic recursive control problem via variational methods (Theorem~\ref{theorem: main result}).
\item
We derive the corresponding adjoint equations, which turn out to be EBSVIEs (equations~\eqref{EBSVIE 1}--\eqref{EBSVIE 2}).
\item
We provide a rigorous approach to deal with the ``diagonal process'' of a stochastic process which has two time-parameters (Lemma~\ref{lemma: Z diagonal}). This abstract result plays an important role in the studies of EBSVIEs and time-inconsistent control problems.
\end{itemize}
\par
The paper is organized as follows: In Section~\ref{section: preliminaries}, we introduce some notation and recall some known results. In Subsection~\ref{subsection: two time-parameters}, we investigate stochastic processes with two time-parameters. In Section~\ref{section: EBSVIE}, we prove the well-posedness of EBSVIE~\eqref{EBSVIE} and study the regularity of the solution $(Y(\cdot,\cdot),Z(\cdot,\cdot))=\{(Y(t,s),Z(t,s))\}_{(t,s)\in[S,T]^2}$ with respect to $t$. In Section~\ref{section: control problem}, we investigate a time-inconsistent stochastic recursive control problem in the open-loop framework; the main result of this section is Theorem~\ref{theorem: main result}. In Section~\ref{section: proof of main result}, we prove Theorem~\ref{theorem: main result} via variational methods. Some technical estimates needed in Section~\ref{section: proof of main result} are proved in Appendix~\ref{appendix}.


\section{Preliminaries}\label{section: preliminaries}

Throughout this paper, $\mathrm{Leb}_{[S,T]}$ denotes the Lebesgue measure on an interval $[S,T]$, and $\1_A$ denotes the indicator function for a given set $A$. $\mathbb{E}[\cdot]$ denotes the expectation, and $\mathbb{E}_t[\cdot]:=\mathbb{E}[\cdot|\mathcal{F}_t]$ denotes the conditional expectation given by $\mathcal{F}_t$ for each $t\geq0$. We say that a function $\rho:[0,\infty)\to[0,\infty)$ is a modulus of continuity if $\rho$ is continuous, increasing, and satisfies $\rho(0)=0$. Let $p,q\geq1$, $0\leq S<T<\infty$, and let $\mathbb{H}$ be a Euclidean space. We define the following spaces of (equivalent classes of) functions and random variables:
\begin{align*}
	&L^p(S,T;\mathbb{H}):=\left\{\varphi:[S,T]\to\mathbb{H}\relmiddle|\varphi\ \text{is measurable}, \int^T_S|\varphi(s)|^p\,ds<\infty\right\},\displaybreak[1]\\
	&L^0_{\mathcal{F}_T}(\Omega;\mathbb{H}):=\{\varphi:\Omega\to\mathbb{H}\,|\,\varphi\ \text{is}\ \mathcal{F}_T\text{-measurable}\},\displaybreak[1]\\
	&L^p_{\mathcal{F}_T}(\Omega;\mathbb{H}):=\{\varphi\in L^0_{\mathcal{F}_T}(\Omega;\mathbb{H})\,|\,\mathbb{E}\bigl[|\varphi|^p\bigr]<\infty\}.
\end{align*}
Furthermore, we introduce the following spaces of (equivalent classes of) processes:
\begin{align*}
	&L^0_\mathbb{F}(S,T;\mathbb{H}):=\{\varphi:\Omega\times[S,T]\to\mathbb{H}\,|\,\varphi(\cdot)\ \text{is progressively measurable}\},\displaybreak[1]\\
	&L^{p,q}_\mathbb{F}(S,T;\mathbb{H}):=\left\{\varphi(\cdot)\in L^0_\mathbb{F}(S,T;\mathbb{H})\relmiddle|\mathbb{E}\Bigl[\Bigl(\int^T_S|\varphi(s)|^q\,ds\Bigr)^{p/q}\Bigr]<\infty\right\},\displaybreak[1]\\
	&L^p_\mathbb{F}(\Omega;C([S,T];\mathbb{H})):=\left\{\varphi(\cdot)\in L^0_\mathbb{F}(S,T;\mathbb{H})\relmiddle|
		\begin{aligned}
		&\varphi(\cdot)\ \text{has continuous paths and satisfies}\\
		&\mathbb{E}\Bigl[\sup_{s\in[S,T]}|\varphi(s)|^p\Bigr]<\infty
		\end{aligned}\right\}.
\end{align*}
Define $L^p_\mathbb{F}(S,T;\mathbb{H}):=L^{p,p}_\mathbb{F}(S,T;\mathbb{H})$. Note that $L^0_\mathbb{F}(S,T;\mathbb{H})$ is a complete metric space with the metric
\begin{equation*}
	(\varphi_1(\cdot),\varphi_2(\cdot))\mapsto\mathbb{E}\Bigl[\int^T_S\min\{|\varphi_1(s)-\varphi_2(s)|,1\}\,ds\Bigr],\ \varphi_1(\cdot),\varphi_2(\cdot)\in L^0_\mathbb{F}(S,T;\mathbb{H}).
\end{equation*}
The induced topology coincides with the one of convergence in measure $\mathrm{Leb}_{[S,T]}\otimes\mathbb{P}$. That is, for $\varphi(\cdot),\varphi_n(\cdot)\in L^0_\mathbb{F}(S,T;\mathbb{H})$, $n\in\mathbb{N}$, $\varphi_n(\cdot)\to \varphi(\cdot)$ in $L^0_\mathbb{F}(S,T;\mathbb{H})$ if and only if
\begin{equation*}
	\lim_{n\to\infty}\mathbb{E}\Bigl[\int^T_S\1_{\{|\varphi_n(s)-\varphi(s)|\geq\ep\}}\,ds\Bigr]=0,\ \forall\,\ep>0.
\end{equation*}
$L^{p,q}_\mathbb{F}(S,T;\mathbb{H})$ and $L^p_\mathbb{F}(\Omega;C([S,T];\mathbb{H}))$ are Banach spaces with the norms
\begin{align*}
	&\|z(\cdot)\|_{L^{p,q}_\mathbb{F}(S,T;\mathbb{H})}:=\mathbb{E}\Bigl[\Bigl(\int^T_S|z(s)|^q\,ds\Bigr)^{p/q}\Bigr]^{1/p},\ z(\cdot)\in L^{p,q}_\mathbb{F}(S,T;\mathbb{H}),
\shortintertext{and}
	&\|y(\cdot)\|_{L^p_\mathbb{F}(\Omega;C([S,T];\mathbb{H}))}:=\mathbb{E}\Bigl[\sup_{s\in[S,T]}|y(s)|^p\Bigr]^{1/p},\ y(\cdot)\in L^p_\mathbb{F}(\Omega;C([S,T];\mathbb{H})),
\end{align*}
respectively.
\par
The following lemma is standard, but plays an interesting role in our study.


\begin{lemm}\label{lemma: regularity}
Let $p,q\geq1$, $0\leq S<T<\infty$, and let $\mathbb{H}$ be a Euclidean space.
\begin{enumerate}
\renewcommand{\labelenumi}{(\roman{enumi})}
\item
It holds that
\begin{equation*}
	L^p_\mathbb{F}(\Omega;C([S,T];\mathbb{H}))\subset L^{p,q}_\mathbb{F}(S,T;\mathbb{H})\subset L^{p,1}_\mathbb{F}(S,T;\mathbb{H})\subset L^1_\mathbb{F}(S,T;\mathbb{H})\subset L^0_\mathbb{F}(S,T;\mathbb{H}),
\end{equation*}
and the embeddings are continuous.
\item
Let $\varphi_n(\cdot)\in L^0_\mathbb{F}(S,T;\mathbb{R})$, $n\in\mathbb{N}$, be uniformly bounded and $\lim_{n\to\infty}\varphi_n(\cdot)=0$ in $L^0_\mathbb{F}(S,T;\mathbb{R})$. Then for any $z(\cdot)\in L^{p,1}_\mathbb{F}(S,T;\mathbb{H})$, it holds that $\lim_{n\to\infty}\varphi_n(\cdot)z(\cdot)=0$ in $L^{p,1}_\mathbb{F}(S,T;\mathbb{H})$. 
\end{enumerate}
\end{lemm}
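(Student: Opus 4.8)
The plan is to prove the chain of inclusions in (i) from the innermost embedding outward, and then deduce (ii) by combining a uniform-integrability argument with the continuity of the $L^{p,1}_\mathbb{F}$-norm already established in (i).

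For (i), I would first treat $L^1_\mathbb{F}(S,T;\mathbb{H})\subset L^0_\mathbb{F}(S,T;\mathbb{H})$: this is immediate since an element with $\mathbb{E}\int^T_S|\varphi(s)|\,ds<\infty$ is in particular progressively measurable, and convergence in $L^1$ implies convergence in measure $\mathrm{Leb}_{[S,T]}\otimes\mathbb{P}$, which by the remarks preceding the lemma is exactly the topology of $L^0_\mathbb{F}$. Next, $L^{p,1}_\mathbb{F}(S,T;\mathbb{H})\subset L^1_\mathbb{F}(S,T;\mathbb{H})$ follows from Jensen's inequality (or $L^p(\Omega)\hookrightarrow L^1(\Omega)$ on the finite measure space $(\Omega,\mathbb{P})$) applied to the random variable $\int^T_S|\varphi(s)|\,ds$, giving $\|\varphi\|_{L^{1,1}_\mathbb{F}}=\mathbb{E}\bigl[\int^T_S|\varphi(s)|\,ds\bigr]\le\mathbb{E}\bigl[(\int^T_S|\varphi(s)|\,ds)^p\bigr]^{1/p}=\|\varphi\|_{L^{p,1}_\mathbb{F}}$. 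For $L^{p,q}_\mathbb{F}(S,T;\mathbb{H})\subset L^{p,1}_\mathbb{F}(S,T;\mathbb{H})$, I would apply H\"older's inequality in the $s$-variable on the finite interval $[S,T]$: $\int^T_S|\varphi(s)|\,ds\le (T-S)^{1-1/q}\bigl(\int^T_S|\varphi(s)|^q\,ds\bigr)^{1/q}$, then raise to the $p$-th power and take expectations, yielding $\|\varphi\|_{L^{p,1}_\mathbb{F}}\le (T-S)^{1-1/q}\|\varphi\|_{L^{p,q}_\mathbb{F}}$. Finally, $L^p_\mathbb{F}(\Omega;C([S,T];\mathbb{H}))\subset L^{p,q}_\mathbb{F}(S,T;\mathbb{H})$ follows by bounding $\int^T_S|\varphi(s)|^q\,ds\le (T-S)\sup_{s\in[S,T]}|\varphi(s)|^q$, raising to the $p/q$ power and taking expectations, so $\|\varphi\|_{L^{p,q}_\mathbb{F}}\le (T-S)^{1/q}\|\varphi\|_{L^p_\mathbb{F}(\Omega;C([S,T];\mathbb{H}))}$. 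Each step exhibits an explicit constant, so continuity of the embeddings is automatic.

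For (ii), fix $z(\cdot)\in L^{p,1}_\mathbb{F}(S,T;\mathbb{H})$ and let $M:=\sup_n\|\varphi_n\|_{L^\infty}<\infty$. The target is $\|\varphi_n z\|_{L^{p,1}_\mathbb{F}}=\mathbb{E}\bigl[\bigl(\int^T_S|\varphi_n(s)|\,|z(s)|\,ds\bigr)^p\bigr]^{1/p}\to 0$. Set $X_n:=\int^T_S|\varphi_n(s)|\,|z(s)|\,ds$; then $0\le X_n\le M\int^T_S|z(s)|\,ds=:X$ with $X\in L^p(\Omega)$ by assumption, so $\{X_n^p\}$ is dominated by $X^p\in L^1(\Omega)$. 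It therefore suffices, by dominated convergence, to show $X_n\to 0$ in probability; then $X_n^p\to 0$ in probability and the domination upgrades this to $L^1(\Omega)$-convergence. To see $X_n\to0$ in probability I would argue along subsequences: given any subsequence, since $\varphi_n\to 0$ in $L^0_\mathbb{F}$ means $\varphi_n\to 0$ in measure $\mathrm{Leb}_{[S,T]}\otimes\mathbb{P}$, a further subsequence converges to $0$ for $\mathrm{Leb}_{[S,T]}\otimes\mathbb{P}$-a.e.\ $(s,\omega)$; along that subsequence, for $\mathbb{P}$-a.e.\ $\omega$ we have $\varphi_n(s,\omega)\to0$ for a.e.\ $s$ and $|\varphi_n(s,\omega)|\,|z(s,\omega)|\le M|z(s,\omega)|$ with $|z(\cdot,\omega)|\in L^1(S,T)$ (this integrability holds $\mathbb{P}$-a.s.\ since $\mathbb{E}\int^T_S|z(s)|\,ds<\infty$), so the dominated convergence theorem in $s$ gives $X_n(\omega)\to0$; hence $X_n\to0$ a.s.\ along the sub-subsequence, in particular in probability. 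Since every subsequence has a further subsequence converging to $0$ in probability, the whole sequence $X_n\to0$ in probability, completing the argument.

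The computations in (i) are entirely routine applications of H\"older/Jensen on finite measure spaces. The only point requiring a little care is (ii): one must not try to pass the limit inside the $ds$-integral directly, because $\varphi_n\to0$ only in measure, not pointwise; the standard remedy — the subsequence-of-subsequence principle together with the a.s.\ extraction from convergence in the product measure, and the $L^p(\Omega)$-domination supplied by $z\in L^{p,1}_\mathbb{F}$ — is what makes the proof go through. This is the step I expect to be the main (mild) obstacle.
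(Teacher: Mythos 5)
The paper states this lemma without proof (it is labelled as ``standard''), so there is no argument of the author's to compare against; your proposal supplies the natural proof and it is correct. Both the H\"older/Jensen estimates with explicit constants in (i) and the subsequence-of-subsequences extraction combined with dominated convergence (in $s$ pathwise, then in $\omega$ using the dominating variable $M\int^T_S|z(s)|\,ds\in L^p(\Omega)$) in (ii) are exactly the standard route, and the one subtlety you flag --- that $\varphi_n\to0$ only in measure, so one must not pass the limit inside the $ds$-integral directly --- is handled correctly.
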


For each $p\geq1$ and Euclidean spaces $\mathbb{H}$ and $\mathbb{G}$, we define
\begin{equation*}
	\mathcal{H}^p_\mathbb{F}(S,T;\mathbb{H}\times\mathbb{G}):=L^p_\mathbb{F}(\Omega;C([S,T];\mathbb{H}))\times L^{p,2}_\mathbb{F}(S,T;\mathbb{G}).
\end{equation*}
Note that $\mathcal{H}^p_\mathbb{F}(S,T;\mathbb{H}\times\mathbb{G})$ is a Banach space with the norm defined by
\begin{equation*}
	\|y(\cdot),z(\cdot)\|_{\mathcal{H}^p_\mathbb{F}(S,T;\mathbb{H}\times\mathbb{G})}:=\mathbb{E}\Bigl[\sup_{s\in[S,T]}|y(s)|^p+\Bigl(\int^T_S|z(s)|^2\,ds\Bigr)^{p/2}\Bigr]^{1/p}
\end{equation*}
for $(y(\cdot),z(\cdot))\in\mathcal{H}^p_\mathbb{F}(S,T;\mathbb{H}\times\mathbb{G})$.


\subsection{Stochastic processes with two time-parameters}\label{subsection: two time-parameters}

In order to study EBSVIEs and time-inconsistent stochastic control problems, we have to consider stochastic processes which have two time-parameters. Now we observe such processes rigorously. Let $p,q\geq1$, $0\leq S<T<\infty$, and let $\mathbb{H}$ be a Euclidean space. For $\mathcal{L}_\mathbb{F}(\mathbb{H})=L^0_\mathbb{F}(S,T;\mathbb{H}),\,L^{p,q}_\mathbb{F}(S,T;\mathbb{H}),\,L^p_\mathbb{F}(\Omega;C([S,T];\mathbb{H}))$, we denote by $C([S,T];\mathcal{L}_\mathbb{F}(\mathbb{H}))$ the space of $\mathcal{L}_\mathbb{F}(\mathbb{H})$-valued continuous functions. Furthermore, we define
\begin{equation*}
	\tilde{C}([S,T];\mathcal{L}_\mathbb{F}(\mathbb{H})):=\left\{\varphi:\Omega\times[S,T]^2\to\mathbb{H}\relmiddle|
		\begin{aligned}
		&\varphi\ \text{is measurable},\ \varphi(t,\cdot)\in\mathcal{L}_\mathbb{F}(\mathbb{H}),\ \forall\,t\in[S,T],\\
		&\text{and}\ t\mapsto\varphi(t,\cdot)\in\mathcal{L}_\mathbb{F}(\mathbb{H})\ \text{is continuous}
		\end{aligned}\right\}.
\end{equation*}
Note that each element of $\tilde{C}([S,T];\mathcal{L}_\mathbb{F}(\mathbb{H}))$ is \emph{jointly measurable} on $\Omega\times[S,T]^2$. Let us discuss a relationship between $C([S,T];\mathcal{L}_\mathbb{F}(\mathbb{H}))$ and $\tilde{C}([S,T];\mathcal{L}_\mathbb{F}(\mathbb{H}))$. To do so, we show the following abstract lemma.


\begin{lemm}\label{lemma: measurable version}
Let $(X,\Sigma,\mu)$ be a finite measure space and $(E,\|\cdot\|_E)$ be a Banach space. Denote by $\mathcal{B}(E)$ the Borel $\sigma$-field with respect to the norm topology of $E$. We denote by $\mathcal{L}(X;E)$ the space of (equivalent classes of) $E$-valued measurable functions, which is a complete metric space with the topology of convergence in measure $\mu$. Then, for any $\varphi\in C([S,T];\mathcal{L}(X;E))$, there exists a jointly measurable function $\tilde{\varphi}:[S,T]\times X\to E$ such that, for any $t\in[S,T]$, $\varphi(t)(x)=\tilde{\varphi}(t,x)$ in $E$ for $\mu$-a.e.\,$x\in X$.
\end{lemm}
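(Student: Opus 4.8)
The plan is to construct $\tilde\varphi$ by discretizing the time parameter $[S,T]$, passing to a limit in measure, and then extracting a jointly measurable version using completeness of $\mathcal{L}(X;E)$. First I would fix, for each $n\in\mathbb{N}$, the uniform partition $S=t^n_0<t^n_1<\dots<t^n_{2^n}=T$ with $t^n_k=S+k(T-S)2^{-n}$, and define the piecewise-constant-in-$t$ function
\begin{equation*}
	\tilde\varphi_n(t,x):=\sum_{k=0}^{2^n-1}\1_{[t^n_k,t^n_{k+1})}(t)\,\varphi(t^n_k)(x)+\1_{\{T\}}(t)\,\varphi(T)(x),
\end{equation*}
where for each $k$ we fix once and for all a genuine measurable representative of the equivalence class $\varphi(t^n_k)\in\mathcal{L}(X;E)$. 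Each $\tilde\varphi_n$ is jointly measurable on $[S,T]\times X$ (it is a finite sum of products of a measurable function of $t$ with a measurable function of $x$), and for each fixed $t$ the function $\tilde\varphi_n(t,\cdot)$ represents the class $\varphi(\lfloor t\rfloor_n)$, where $\lfloor t\rfloor_n$ denotes the left endpoint of the dyadic subinterval containing $t$ (with $\lfloor T\rfloor_n=T$).

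Next I would use the continuity of $\varphi:[S,T]\to\mathcal{L}(X;E)$: since $[S,T]$ is compact, $\varphi$ is uniformly continuous, so $\sup_{t\in[S,T]}d\bigl(\varphi(\lfloor t\rfloor_n),\varphi(t)\bigr)\to0$ as $n\to\infty$, where $d$ is the metric of convergence in measure. In particular, for each fixed $t$, $\tilde\varphi_n(t,\cdot)\to\varphi(t)$ in measure $\mu$. By a diagonal/Borel--Cantelli argument I would pass to a subsequence (not depending on $t$, using uniform continuity to control the modulus simultaneously for all $t$) along which $\tilde\varphi_{n_j}(t,\cdot)$ is $\mu$-a.e.\ Cauchy in $E$ for each $t$ — more precisely, I would choose $n_j$ so that $\sup_t d(\varphi(\lfloor t\rfloor_{n_{j+1}}),\varphi(\lfloor t\rfloor_{n_j}))\le 2^{-j}$, which via Markov's inequality gives $\mu(\{x:\|\tilde\varphi_{n_{j+1}}(t,x)-\tilde\varphi_{n_j}(t,x)\|_E\ge 2^{-j/2}\})\le 2^{-j/2}$ for every $t$. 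Then define
\begin{equation*}
	\tilde\varphi(t,x):=\lim_{j\to\infty}\tilde\varphi_{n_j}(t,x)\ \text{if the limit exists in }E,\qquad \tilde\varphi(t,x):=0\ \text{otherwise.}
\end{equation*}
The set where the limit exists is jointly measurable (it is the set where $(\tilde\varphi_{n_j}(t,x))_j$ is Cauchy in the Polish space $E$, a condition defined by countably many measurable inequalities), and on that set $\tilde\varphi$ is a pointwise limit of jointly measurable functions, hence jointly measurable; so $\tilde\varphi$ is jointly measurable on all of $[S,T]\times X$.

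Finally I would check that $\tilde\varphi$ is the desired version: for each fixed $t\in[S,T]$, we have $\tilde\varphi_{n_j}(t,\cdot)\to\varphi(t)$ in measure, and along a further ($t$-dependent) subsequence $\mu$-a.e.; but by the Borel--Cantelli estimate above, the full sequence $\tilde\varphi_{n_j}(t,\cdot)$ already converges $\mu$-a.e.\ to some limit, which must then coincide $\mu$-a.e.\ with $\varphi(t)$. Hence $\tilde\varphi(t,x)=\varphi(t)(x)$ in $E$ for $\mu$-a.e.\ $x\in X$, as claimed. The main obstacle — and the step requiring the most care — is arranging the subsequence $(n_j)$ so that the a.e.\ convergence (and the measurability of the ``limit exists'' set) holds with the \emph{same} subsequence simultaneously for all $t\in[S,T]$; this is exactly where compactness of $[S,T]$ and the resulting uniform continuity of $\varphi$ are essential, converting a family of ``convergence in measure for each $t$'' statements into a single uniform modulus that can be summed. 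The finiteness of $\mu$ is used to make convergence in measure metrizable and to apply Markov's inequality; separability of $E$ is not needed, only its completeness, for the ``Cauchy $\Rightarrow$ convergent'' step and the measurability of the Cauchy set.
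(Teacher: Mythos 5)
Your proposal is correct and follows essentially the same route as the paper: piecewise-constant-in-$t$ approximants built on partitions whose accuracy is controlled uniformly in $t$ via the uniform continuity of $\varphi$ on the compact interval, a Borel--Cantelli argument to get $\mu$-a.e.\ convergence for each fixed $t$, and the definition of $\tilde{\varphi}$ as the pointwise limit where it exists (and $0$ elsewhere) to secure joint measurability. The only cosmetic difference is that you compare consecutive terms of a subsequence and then identify the a.e.\ limit with $\varphi(t)$ through convergence in measure, whereas the paper chooses the partitions so that $\mu\{\|\varphi(t)(x)-\tilde{\varphi}_n(t,x)\|_E>2^{-n}\}\leq 2^{-n}$ for all $t$ and compares directly with $\varphi(t)$.
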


The above lemma is standard, but let us prove that fact for self-containedness.


\begin{proof}
Since the function $[S,T]\ni t\mapsto\varphi(t)\in\mathcal{L}(X;E)$ is (uniformly) continuous, there exists a sequence $\{\Pi_n\}_{n\in\mathbb{N}}$ of finite partitions $\Pi_n=\{t^n_k\,|\,k=0,1,\dots,m_n\}$ of $[S,T]$ such that the mesh size of $\Pi_n$ tends to zero as $n\to\infty$, and
\begin{equation*}
	\mu\{x\in X\,|\,\|\varphi(t)(x)-\tilde{\varphi}_n(t,x)\|_E>2^{-n}\}\leq2^{-n},\ \forall\,t\in[S,T],\ \forall\,n\in\mathbb{N},
\end{equation*}
where, for each $n\in\mathbb{N}$, $\tilde{\varphi}_n$ is defined by
\begin{equation*}
	\tilde{\varphi}_n(t,x):=\sum^{m_n}_{k=1}\1_{[t^n_{k-1},t^n_k)}(t)\varphi(t^n_{k-1})(x)+\1_{\{T\}}(t)\varphi(T)(x),\ (t,x)\in[S,T]\times X.
\end{equation*}
Since $\tilde{\varphi}_n:[S,T]\times X\to E$ is jointly measurable for all $n\in\mathbb{N}$, the limit
\begin{equation*}
	\tilde{\varphi}(t,x):=\begin{cases}
		\lim_{n\to\infty}\tilde{\varphi}_n(t,x)\ &\text{if the limit exists},\\
		0\ &\text{otherwise},
	\end{cases}
\end{equation*}
is also jointly measurable. Let $t\in[S,T]$ be fixed. The Borel--Cantelli lemma yields that, for $\mu$-a.e.\,$x\in X$, there exists a number $N(x)\in\mathbb{N}$ such that $\|\varphi(t)(x)-\tilde{\varphi}_n(t,x)\|_E\leq2^{-n}$ for any $n\geq N(x)$, and hence $\varphi(t)(x)=\tilde{\varphi}(t,x)$. This completes the proof.
\end{proof}

Now we show three examples of the above lemma.
\begin{enumerate}
\renewcommand{\labelenumi}{(\roman{enumi})}
\item
Take $(X,\Sigma,\mu)=(\Omega,\mathcal{F}_T,\mathbb{P})$ and $E=\mathbb{H}$. For any $\psi\in C([S,T];L^0_{\mathcal{F}_T}(\Omega;\mathbb{H}))$, there exists a $\tilde{\psi}\in\tilde{C}([S,T];L^0_{\mathcal{F}_T}(\Omega;\mathbb{H}))$ (which is jointly measurable) such that
\begin{equation*}
	\psi(t,\omega)=\tilde{\psi}(t,\omega),\ \text{for}\ \mathbb{P}\text{-a.e.}\,\omega\in\Omega,\ \forall\,t\in[S,T].
\end{equation*}
\item
Take $(X,\Sigma,\mu)=(\Omega\times[S,T],\mathcal{P},\mathrm{Leb}_{[S,T]}\otimes\mathbb{P})$ and $E=\mathbb{H}$, where $\mathcal{P}$ is the progressive $\sigma$-field. For any $Z\in C([S,T];L^0_\mathbb{F}(S,T;\mathbb{H}))$, there exists a $\tilde{Z}\in\tilde{C}([S,T];L^0_\mathbb{F}(S,T;\mathbb{H}))$ (which is jointly measurable) such that
\begin{equation}\label{version Z}
	Z(t,s,\omega)=\tilde{Z}(t,s,\omega),\ \text{for}\ \mathrm{Leb}_{[S,T]}\otimes\mathbb{P}\text{-a.e.}\,(s,\omega)\in[S,T]\times\Omega,\ \forall\,t\in[S,T].
\end{equation}
\item
Take $(X,\Sigma,\mu)=(\Omega,\mathcal{F},\mathbb{P})$ and $E=C([S,T];\mathbb{H})$. For any $Y\in C([S,T];L^p_\mathbb{F}(\Omega;C([S,T];\mathbb{H})))$, there exists a $\tilde{Y}\in\tilde{C}([S,T];L^p_\mathbb{F}(\Omega;C([S,T];\mathbb{H})))$ (which is jointly measurable) such that
\begin{equation*}
	Y(t,s,\omega)=\tilde{Y}(t,s,\omega),\ \forall\,s\in[S,T],\ \text{for}\ \mathbb{P}\text{-a.e.}\,\omega\in\Omega,\ \forall\,t\in[S,T].
\end{equation*}
\end{enumerate}
In the following, for each element of $C([S,T];\mathcal{L}(X;E))$, we always consider a jointly measurable version in the above sense, and we identify ``$C$'' and ``$\tilde{C}$''.
\par
We define
\begin{equation*}
	C_b([S,T];L^0_\mathbb{F}(S,T;\mathbb{H})):=\{\varphi(\cdot,\cdot)\in C([S,T];L^0_\mathbb{F}(S,T;\mathbb{H}))\,|\,\varphi\ \text{is uniformly bounded}\}.
\end{equation*}
Lastly, for each $p\geq1$ and Euclidean spaces $\mathbb{H}$ and $\mathbb{G}$, we define
\begin{equation*}
	\mathfrak{H}^p_\mathbb{F}(S,T;\mathbb{H}\times\mathbb{G}):=C([S,T];L^p_\mathbb{F}(\Omega;C([S,T];\mathbb{H})))\times C([S,T];L^{p,2}_\mathbb{F}(S,T;\mathbb{G})).
\end{equation*}
Note that $\mathfrak{H}^p_\mathbb{F}(S,T;\mathbb{H}\times\mathbb{G})$ is a Banach space with the norm defined by
\begin{equation*}
	\|y(\cdot,\cdot),z(\cdot,\cdot)\|_{\mathfrak{H}^p_\mathbb{F}(S,T;\mathbb{H}\times\mathbb{G})}:=\sup_{t\in[S,T]}\mathbb{E}\Bigl[\sup_{s\in[S,T]}|y(t,s)|^p+\Bigl(\int^T_S|z(t,s)|^2\,ds\Bigr)^{p/2}\Bigr]^{1/p}
\end{equation*}
for $(y(\cdot,\cdot),z(\cdot,\cdot))\in \mathfrak{H}^p_\mathbb{F}(S,T;\mathbb{H}\times\mathbb{G})$.


\begin{rem}
The ``diagonal process'' of a process with two time-parameters is crucial in the studies of EBSVIEs and time-inconsistent control problems. Let us remark on that.
\begin{itemize}
\item
For each $Y(\cdot,\cdot)\in C([S,T];L^p_\mathbb{F}(\Omega;C([S,T];\mathbb{H})))$, the diagonal process $(Y(s,s))_{s\in[S,T]}$ is progressively measurable, and the map $[S,T]\ni s\mapsto Y(s,s)\in L^p_{\mathcal{F}_T}(\Omega;\mathbb{H})$ is continuous. Moreover, it can be easily shown that, for any $t\in[S,T)$,
\begin{equation*}
	\lim_{\ep\downarrow0}\frac{1}{\ep}\mathbb{E}\Bigl[\int^{t+\ep}_t\bigl|Y(t,s)-Y(s,s)\bigr|^p\,ds\Bigr]=\lim_{\ep\downarrow0}\frac{1}{\ep}\mathbb{E}\Bigl[\int^{t+\ep}_t\bigl|Y(t,s)-Y(t,t)\bigr|^p\,ds\Bigr]=0.
\end{equation*}
\item
The case of the space $C([S,T];L^{p,q}_\mathbb{F}(S,T;\mathbb{H}))$ is more delicate. In fact, in some previous works on time-inconsistent stochastic control problems, the diagonal process $(Z(s,s))_{s\in[S,T]}$ of $Z(\cdot,\cdot)\in C([S,T];L^{p,q}_\mathbb{F}(S,T;\mathbb{H}))$ was used without rigorous discussions. However, such a process is not well-defined in general. Indeed, for two elements $Z_1(\cdot,\cdot)$ and $Z_2(\cdot,\cdot)$ in $C([S,T];L^{p,q}_\mathbb{F}(S,T;\mathbb{H}))$ such that
\begin{equation*}
	Z_1(t,s,\omega)=Z_2(t,s,\omega),\ \text{for}\ \mathrm{Leb}_{[S,T]}\otimes\mathbb{P}\text{-a.e.}\,(s,\omega)\in[S,T]\times\Omega,\ \forall\,t\in[S,T],
\end{equation*}
the equality $Z_1(s,s,\omega)=Z_2(s,s,\omega)$ for $\mathrm{Leb}_{[S,T]}\otimes\mathbb{P}$-a.e.\,$(s,\omega)\in[S,T]\times\Omega$ does not hold in general. Moreover, the next example shows that the limit
\begin{equation*}
	\lim_{\ep\downarrow0}\frac{1}{\ep}\mathbb{E}\Bigl[\int^{t+\ep}_tZ(t,s)\,ds\Bigr],\ t\in[S,T),
\end{equation*}
does not exist in general. 
\end{itemize}
\end{rem}


\begin{exam}\label{example: diagonal}
Let $r>1$ be fixed. Define two (deterministic) processes $Z_1(t,s)$ and $Z_2(t,s)$ for $(t,s)\in[0,1]^2$ by
\begin{equation*}
	Z_1(t,s)=\begin{cases}
		(s-t)^{-1/r}\ &\text{if}\ s>t,\\
		0\ &\text{if}\ s\leq t,
	\end{cases}
	\ \text{and}\ 
	Z_2(t,s)=\begin{cases}
		(s-t)^{-1/r}\ &\text{if}\ s>t,\\
		1\ &\text{if}\ s=t,\\
		0\ &\text{if}\ s<t.
	\end{cases}
\end{equation*}
It can be easily shown that both $Z_1$ and $Z_2$ are jointly measurable, $Z_1(t,s)=Z_2(t,s)$ for a.e.\,$s\in[0,1]$ for any $t\in[0,1]$, and $Z_1(\cdot,\cdot),Z_2(\cdot,\cdot)\in C([0,1];L^q(0,1;\mathbb{R}))$ for any $q\in[1,r)$, but $Z_1(s,s)\neq Z_2(s,s)$ for any $s\in[0,1]$. Furthermore,
\begin{equation*}
	\frac{1}{\ep}\int^{t+\ep}_tZ_1(t,s)\,ds\Bigl(=\frac{1}{\ep}\int^{t+\ep}_tZ_2(t,s)\,ds\Bigr)=\frac{r}{r-1}\ep^{-1/r}\overset{\ep\downarrow0}{\longrightarrow}\infty,\ \forall\,t\in[0,1).
\end{equation*}
\end{exam}

Thus, the case of the space $C([S,T];L^{p,q}_\mathbb{F}(S,T;\mathbb{H}))$ needs a more careful observation. Firstly, let us define a property which the ``diagonal process'' of $Z(\cdot,\cdot)\in C([S,T];L^1_\mathbb{F}(S,T;\mathbb{H}))$ should satisfy, in view of applications to time-inconsistent stochastic control problems.


\begin{defi}
Let $Z(\cdot,\cdot)\in C([S,T];L^1_\mathbb{F}(S,T;\mathbb{H}))$ be given. We say that a process $\mathcal{Z}(\cdot)\in L^1_\mathbb{F}(S,T;\mathbb{H})$ satisfies Property~(D) with respect to $Z(\cdot,\cdot)$ if it holds that
\begin{equation*}
	\lim_{\ep\downarrow0}\frac{1}{\ep}\mathbb{E}\Bigl[\int^{t+\ep}_t|Z(t,s)-\mathcal{Z}(s)|\,ds\Bigr]=0,\ \forall\,t\in[S,T).
\end{equation*}
\end{defi}

Here ``D'' is named after ``Diagonal''. Note that the above definition does not depend on the choice of a ``version'' (in the sense of \eqref{version Z}) of $Z(\cdot,\cdot)$. That is, if $\mathcal{Z}(\cdot)\in L^1_\mathbb{F}(S,T;\mathbb{H})$ satisfies Property~(D) with respect to $Z(\cdot,\cdot)$, then for any $\tilde{Z}(\cdot,\cdot)\in C([S,T];L^1_\mathbb{F}(S,T;\mathbb{H}))$ such that $\tilde{Z}(t,s)=Z(t,s)$ for $\mathrm{Leb}_{[S,T]}\otimes\mathbb{P}$-a.e.\,$(s,\omega)\in[S,T]\times\Omega$, $\forall\,t\in[S,T]$, the process $\mathcal{Z}(\cdot)$ also satisfies Property~(D) with respect to $\tilde{Z}(\cdot,\cdot)$. Furthermore, the following lemma shows that, for each $Z(\cdot,\cdot)\in C([S,T];L^1_\mathbb{F}(S,T;\mathbb{H}))$, the process $\mathcal{Z}(\cdot)\in L^1_\mathbb{F}(S,T;\mathbb{H})$ satisfying Property~(D) with respect to $Z(\cdot,\cdot)$ is, if it exists, unique.


\begin{lemm}
Let $Z(\cdot,\cdot)\in C([S,T];L^{1}_\mathbb{F}(S,T;\mathbb{H}))$ be given. Assume that both two processes $\mathcal{Z}_1(\cdot),\mathcal{Z}_2(\cdot)\in L^1_\mathbb{F}(S,T;\mathbb{H})$ satisfy Property~(D) with respect to $Z(\cdot,\cdot)$. Then it holds that $\mathcal{Z}_1(s)=\mathcal{Z}_2(s)$ for $\mathrm{Leb}_{[S,T]}\otimes\mathbb{P}$-a.e.\,$(s,\omega)\in[S,T]\times\Omega$.
\end{lemm}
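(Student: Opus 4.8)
The plan is to collapse the two instances of Property~(D) by the triangle inequality, reduce the whole question to a single nonnegative scalar function in $L^1(S,T;\mathbb{R})$, and then read off the conclusion from the Lebesgue differentiation theorem.

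First I would introduce the auxiliary function $\varphi(s):=\mathbb{E}\bigl[|\mathcal{Z}_1(s)-\mathcal{Z}_2(s)|\bigr]$ for $s\in[S,T]$. Since $\mathcal{Z}_1(\cdot),\mathcal{Z}_2(\cdot)\in L^1_\mathbb{F}(S,T;\mathbb{H})$, Tonelli's theorem gives $\int^T_S\varphi(s)\,ds=\mathbb{E}\bigl[\int^T_S|\mathcal{Z}_1(s)-\mathcal{Z}_2(s)|\,ds\bigr]<\infty$, so $\varphi\in L^1(S,T;\mathbb{R})$ and $\varphi\geq0$. Next, for any $t\in[S,T)$ and small $\ep>0$, the triangle inequality and Tonelli's theorem yield
\[
	\frac{1}{\ep}\int^{t+\ep}_t\varphi(s)\,ds=\frac{1}{\ep}\mathbb{E}\Bigl[\int^{t+\ep}_t|\mathcal{Z}_1(s)-\mathcal{Z}_2(s)|\,ds\Bigr]\leq\frac{1}{\ep}\mathbb{E}\Bigl[\int^{t+\ep}_t|Z(t,s)-\mathcal{Z}_1(s)|\,ds\Bigr]+\frac{1}{\ep}\mathbb{E}\Bigl[\int^{t+\ep}_t|Z(t,s)-\mathcal{Z}_2(s)|\,ds\Bigr].
\]
Both terms on the right-hand side tend to $0$ as $\ep\downarrow0$, by Property~(D) for $\mathcal{Z}_1(\cdot)$ and for $\mathcal{Z}_2(\cdot)$ respectively; hence $\lim_{\ep\downarrow0}\frac{1}{\ep}\int^{t+\ep}_t\varphi(s)\,ds=0$ for every $t\in[S,T)$.

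Finally, by the Lebesgue differentiation theorem, Lebesgue-almost every $t\in[S,T)$ is a Lebesgue point of $\varphi$, hence in particular a right-hand Lebesgue point, so that $\lim_{\ep\downarrow0}\frac{1}{\ep}\int^{t+\ep}_t\varphi(s)\,ds=\varphi(t)$ for a.e.\ $t\in[S,T)$. Comparing this with the limit obtained in the previous paragraph forces $\varphi(t)=0$ for a.e.\ $t\in[S,T)$, i.e.\ $\mathbb{E}\bigl[|\mathcal{Z}_1(s)-\mathcal{Z}_2(s)|\bigr]=0$ for a.e.\ $s\in[S,T]$, which is precisely the assertion that $\mathcal{Z}_1(s)=\mathcal{Z}_2(s)$ for $\mathrm{Leb}_{[S,T]}\otimes\mathbb{P}$-a.e.\ $(s,\omega)\in[S,T]\times\Omega$.

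There is no substantial obstacle here. The only point deserving a little care is that Property~(D) is phrased with one-sided forward averages $\frac{1}{\ep}\int_t^{t+\ep}$, so one must invoke the one-sided form of the Lebesgue differentiation theorem (equivalently, observe that any Lebesgue point in the symmetric sense is automatically a one-sided Lebesgue point); and the integrability $\varphi\in L^1(S,T;\mathbb{R})$ needed to apply that theorem is exactly what $\mathcal{Z}_1(\cdot),\mathcal{Z}_2(\cdot)\in L^1_\mathbb{F}(S,T;\mathbb{H})$ provides.
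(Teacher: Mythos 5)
Your proposal is correct and follows essentially the same argument as the paper: both introduce $\varphi(s)=\mathbb{E}\bigl[|\mathcal{Z}_1(s)-\mathcal{Z}_2(s)|\bigr]\in L^1(S,T;\mathbb{R})$, bound its forward averages by the two Property~(D) quantities via the triangle inequality, and conclude by the (one-sided) Lebesgue differentiation theorem. Your explicit remark about the one-sided form of the differentiation theorem is a small point of care that the paper leaves implicit.
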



\begin{proof}
Since the function $s\mapsto\mathbb{E}\bigl[|\mathcal{Z}_1(s)-\mathcal{Z}_2(s)|\bigr]$ is in $L^1(S,T;\mathbb{R})$, by the Lebesgue differentiation theorem, it holds that
\begin{equation*}
	\lim_{\ep\downarrow0}\frac{1}{\ep}\int^{t+\ep}_t\mathbb{E}\bigl[|\mathcal{Z}_1(s)-\mathcal{Z}_2(s)|\bigr]\,ds=\mathbb{E}\bigl[|\mathcal{Z}_1(t)-\mathcal{Z}_2(t)|\bigr]
\end{equation*}
for a.e.\,$t\in[S,T)$. On the other hand, for any $t\in[S,T)$, it holds that
\begin{align*}
	&\frac{1}{\ep}\int^{t+\ep}_t\mathbb{E}\bigl[|\mathcal{Z}_1(s)-\mathcal{Z}_2(s)|\bigr]\,ds\\
	&\leq\frac{1}{\ep}\mathbb{E}\Bigl[\int^{t+\ep}_t|\mathcal{Z}_1(s)-Z(t,s)|\,ds\Bigr]+\frac{1}{\ep}\mathbb{E}\Bigl[\int^{t+\ep}_t|Z(t,s)-\mathcal{Z}_2(s)|\,ds\Bigr]\overset{\ep\downarrow0}{\longrightarrow}0.
\end{align*}
Thus, we get $\mathbb{E}\bigl[|\mathcal{Z}_1(t)-\mathcal{Z}_2(t)|\bigr]=0$ for a.e.\,$t\in[S,T]$. This implies that $\mathcal{Z}_1(s)=\mathcal{Z}_2(s)$ for $\mathrm{Leb}_{[S,T]}\otimes\mathbb{P}$-a.e.\,$(s,\omega)\in[S,T]\times\Omega$.
\end{proof}

Then, when does the process satisfying Property~(D) exist? Example~\ref{example: diagonal} shows that there does not exist such processes in general even in the case of deterministic processes.
\par
First, assume that there exist a process $\varphi(\cdot)\in L^1_\mathbb{F}(\Omega;C([S,T];\mathbb{R}))$ and a uniformly bounded process $z(\cdot)\in L^0_\mathbb{F}(S,T;\mathbb{H})$ such that $Z(t,s)=\varphi(t)z(s)$ for $\mathrm{Leb}_{[t,T]}\otimes\mathbb{P}$-a.e.\,$(s,\omega)\in[t,T]\times\Omega$, $\forall\,t\in[S,T]$. Then it can be easily shown that the process $\mathcal{Z}(s):=\varphi(s)z(s)$, $s\in[S,T]$, satisfies property~(D) with respect to $Z(\cdot,\cdot)$.
This technique arises in the literature of time-inconsistent stochastic linear-quadratic control problems; see for example \cite{a_Hu-Jin-Zhou_17}. However, in most control problems, we cannot use this method due to the generality of the process $Z(\cdot,\cdot)$. We investigate another approach to deal with a general $Z(\cdot,\cdot)$ by imposing a regularity assumption on the map $t\mapsto Z(t,\cdot)$.
\par
Let $p,q\geq1$ be fixed. Suppose that $Z(\cdot,\cdot)$ is in $C^1([S,T];L^{p,q}_\mathbb{F}(S,T;\mathbb{H}))$, that is, $[S,T]\ni t\mapsto Z(t,\cdot)$ is continuously differentiable as an $L^{p,q}_\mathbb{F}(S,T;\mathbb{H})$-valued function. Then there exists a process $\partial_tZ(\cdot,\cdot)\in C([S,T];L^{p,q}_\mathbb{F}(S,T;\mathbb{H}))$ such that
\begin{equation*}
	Z(t_1,\cdot)-Z(t_2,\cdot)=\int^{t_1}_{t_2}\partial_tZ(\tau,\cdot)\,d\tau,\ \forall\,t_1,t_2\in[S,T],
\end{equation*}
where the integral in the right-hand side is the Bochner integral on the Banach space $L^{p,q}_\mathbb{F}(S,T;\mathbb{H})$. By Fubini's theorem, for $\mathrm{Leb}_{[S,T]}\otimes\mathbb{P}$-a.e.\,$(s,\omega)\in[S,T]\times\Omega$, the function $\tau\mapsto \partial_tZ(\tau,s,\omega)$ is well-defined as an element of $L^1(S,T;\mathbb{H})$. Furthermore, for any $t_1,t_2\in[S,T]$, we have
\begin{equation}\label{Z integral representation}
	Z(t_1,s,\omega)-Z(t_2,s,\omega)=\Bigl(\int^{t_1}_{t_2}\partial_t Z(\tau,\cdot,\cdot)\,d\tau\Bigr)(s,\omega)=\int^{t_1}_{t_2}\partial_t Z(\tau,s,\omega)\,d\tau,
\end{equation}
for $\mathrm{Leb}_{[S,T]}\otimes\mathbb{P}\text{-a.e.}\,(s,\omega)\in[S,T]\times\Omega$ (where the null set may depend on $t_1$ and $t_2$). Now we define a progressively measurable process $\mathrm{Diag}[Z](\cdot)$ by
\begin{equation}\label{Z diagonal}
	\mathrm{Diag}[Z](s,\omega):=
	\begin{cases}
		Z(S,s,\omega)+\int^s_S\partial_tZ(\tau,s,\omega)\,d\tau\ &\text{if}\ \partial_tZ(\cdot,s,\omega)\in L^1(S,T;\mathbb{H}),\\
		0\ &\text{otherwise}.
	\end{cases}
\end{equation}
By \eqref{Z integral representation}, for any $t\in[S,T]$, it holds that
\begin{equation}\label{Z diagonal equality}
	\mathrm{Diag}[Z](s)=Z(t,s)+\int^s_t\partial_tZ(\tau,s)\,d\tau,\ \text{for}\ \mathrm{Leb}_{[S,T]}\otimes\mathbb{P}\text{-a.e.}\,(s,\omega)\in[S,T]\times\Omega.
\end{equation}
We emphasize that, in the above expression, the $\mathrm{Leb}_{[S,T]}\otimes\mathbb{P}$-null set is allowed to depend on $t$. Now let us show important properties of $\mathrm{Diag}[Z](\cdot)$.


\begin{lemm}\label{lemma: Z diagonal}
For a given $Z(\cdot,\cdot)\in C^1([S,T];L^{p,q}_\mathbb{F}(S,T;\mathbb{H}))$, define $\mathrm{Diag}[Z](\cdot)\in L^0_\mathbb{F}(S,T;\mathbb{H})$ by \eqref{Z diagonal}. Then the following hold.
\begin{enumerate}
\renewcommand{\labelenumi}{(\roman{enumi})}
\item
$\mathrm{Diag}[Z](\cdot)\in L^{p,q}_\mathbb{F}(S,T;\mathbb{H})$. Moreover, the following estimate holds:
\begin{equation*}
	\|\mathrm{Diag}[Z](\cdot)\|_{L^{p,q}_\mathbb{F}(S,T;\mathbb{H})}\leq\|Z(S,\cdot)\|_{L^{p,q}_\mathbb{F}(S,T;\mathbb{H})}+(T-S)\sup_{t\in[S,T]}\|\partial_tZ(t,\cdot)\|_{L^{p,q}_\mathbb{F}(S,T;\mathbb{H})}.
\end{equation*}
\item
For any $1\leq p'\leq p$ and $1\leq q'\leq q$, it holds that
\begin{equation*}
	\mathbb{E}\Bigl[\Bigl(\int^{t+\ep}_t\bigl|Z(t,s)-\mathrm{Diag}[Z](s)\bigr|^{q'}\,ds\Bigr)^{p'/q'}\Bigr]^{1/p'}=o(\ep^{1+1/q'-1/q}),\ \forall\,t\in[S,T).
\end{equation*}
In particular, $\mathrm{Diag}[Z](\cdot)$ is the (unique) process satisfying Property~(D) with respect to $Z(\cdot,\cdot)$. 
\end{enumerate}
\end{lemm}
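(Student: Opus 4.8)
The plan is to derive both parts from the integral representation \eqref{Z integral representation}--\eqref{Z diagonal equality} of $\mathrm{Diag}[Z](\cdot)$, combined with Minkowski's integral inequality and H\"older's inequality in the mixed-norm Banach space $L^{p,q}_\mathbb{F}(S,T;\mathbb{H})$. Throughout I will use that, since $Z(\cdot,\cdot)\in C^1([S,T];L^{p,q}_\mathbb{F}(S,T;\mathbb{H}))$, the map $[S,T]\ni\tau\mapsto\partial_tZ(\tau,\cdot)\in L^{p,q}_\mathbb{F}(S,T;\mathbb{H})$ is continuous on a compact interval, hence uniformly continuous --- with some modulus of continuity $\rho$ --- and bounded, say by $M:=\sup_{\tau\in[S,T]}\|\partial_tZ(\tau,\cdot)\|_{L^{p,q}_\mathbb{F}(S,T;\mathbb{H})}<\infty$.

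For part (i), I would apply \eqref{Z diagonal equality} with $t=S$ and the triangle inequality in $L^{p,q}_\mathbb{F}(S,T;\mathbb{H})$ to bound $\|\mathrm{Diag}[Z](\cdot)\|_{L^{p,q}_\mathbb{F}(S,T;\mathbb{H})}$ by $\|Z(S,\cdot)\|_{L^{p,q}_\mathbb{F}(S,T;\mathbb{H})}$ plus the $L^{p,q}_\mathbb{F}(S,T;\mathbb{H})$-norm of $s\mapsto\int^s_S\partial_tZ(\tau,s)\,d\tau=\int^T_S\1_{\{\tau\le s\}}\partial_tZ(\tau,s)\,d\tau$. Applying Minkowski's integral inequality successively in the $ds$-variable (for the $L^q$-norm) and then in $\omega$ (for the $L^p$-norm), this second term is at most $\int^T_S\|\partial_tZ(\tau,\cdot)\|_{L^{p,q}_\mathbb{F}(S,T;\mathbb{H})}\,d\tau\le(T-S)M$. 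This simultaneously gives $\mathrm{Diag}[Z](\cdot)\in L^{p,q}_\mathbb{F}(S,T;\mathbb{H})$ and the stated estimate.

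For part (ii), fix $t\in[S,T)$ and $\ep\in(0,T-t]$. By \eqref{Z diagonal equality}, for $\mathrm{Leb}_{[S,T]}\otimes\mathbb{P}$-a.e.\ $(s,\omega)$ we have $Z(t,s)-\mathrm{Diag}[Z](s)=-\int^s_t\partial_tZ(\tau,s)\,d\tau$, so $|Z(t,s)-\mathrm{Diag}[Z](s)|\le\int^{t+\ep}_t|\partial_tZ(\tau,s)|\,d\tau$ for $s\in[t,t+\ep]$. I would then apply Minkowski's integral inequality over $[t,t+\ep]\times\Omega$ in the $L^{p',q'}$-norm, then H\"older's inequality in $ds$ with exponent $q/q'\ge1$ (producing the factor $\ep^{1/q'-1/q}$), and finally H\"older's inequality in $\omega$ with exponent $p/p'\ge1$ (valid since $\mathbb{P}$ is a probability measure), to obtain
\begin{equation*}
\mathbb{E}\Bigl[\Bigl(\int^{t+\ep}_t|Z(t,s)-\mathrm{Diag}[Z](s)|^{q'}\,ds\Bigr)^{p'/q'}\Bigr]^{1/p'}\le\ep^{1/q'-1/q}\int^{t+\ep}_t\bigl\|\1_{[t,t+\ep]}\partial_tZ(\tau,\cdot)\bigr\|_{L^{p,q}_\mathbb{F}(S,T;\mathbb{H})}\,d\tau.
\end{equation*}
To conclude I would show the last integral is $o(\ep)$: for $\tau\in[t,t+\ep]$,
\begin{equation*}
\bigl\|\1_{[t,t+\ep]}\partial_tZ(\tau,\cdot)\bigr\|_{L^{p,q}_\mathbb{F}(S,T;\mathbb{H})}\le\|\partial_tZ(\tau,\cdot)-\partial_tZ(t,\cdot)\|_{L^{p,q}_\mathbb{F}(S,T;\mathbb{H})}+\delta_t(\ep)\le\rho(\ep)+\delta_t(\ep),
\end{equation*}
where $\delta_t(\ep):=\|\1_{[t,t+\ep]}\partial_tZ(t,\cdot)\|_{L^{p,q}_\mathbb{F}(S,T;\mathbb{H})}\to0$ as $\ep\downarrow0$ by dominated convergence (the integrand $(\int^{t+\ep}_t|\partial_tZ(t,s)|^q\,ds)^{p/q}$ is dominated by $(\int^T_S|\partial_tZ(t,s)|^q\,ds)^{p/q}\in L^1(\Omega)$). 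Hence the integral is at most $\ep(\rho(\ep)+\delta_t(\ep))=o(\ep)$, and the whole right-hand side is $o(\ep^{1+1/q'-1/q})$, which is the claim.

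Finally, specializing to $p'=q'=1$ --- legitimate because $\mathrm{Diag}[Z](\cdot)\in L^{p,q}_\mathbb{F}(S,T;\mathbb{H})\subset L^1_\mathbb{F}(S,T;\mathbb{H})$ by part (i) and Lemma~\ref{lemma: regularity}(i) --- yields $\frac1\ep\mathbb{E}[\int^{t+\ep}_t|Z(t,s)-\mathrm{Diag}[Z](s)|\,ds]=o(\ep^{1-1/q})\to0$ as $\ep\downarrow0$ since $q\ge1$; thus $\mathrm{Diag}[Z](\cdot)$ satisfies Property~(D), and its uniqueness has already been established above. The only step that is not purely mechanical --- the main obstacle --- is upgrading the trivial bound $\|\1_{[t,t+\ep]}\partial_tZ(\tau,\cdot)\|_{L^{p,q}_\mathbb{F}(S,T;\mathbb{H})}\le M$, which alone yields only the rate $O(\ep^{1+1/q'-1/q})$, to the sharp $o$-rate; this is exactly where the continuity of $\tau\mapsto\partial_tZ(\tau,\cdot)$ in $L^{p,q}_\mathbb{F}(S,T;\mathbb{H})$ and the dominated-convergence vanishing of $\delta_t(\ep)$ enter, and it is what makes $\mathrm{Diag}[Z](\cdot)$ a genuine ``diagonal process'' rather than merely an element of $L^{p,q}_\mathbb{F}(S,T;\mathbb{H})$.
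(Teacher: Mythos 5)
Your proposal is correct and follows essentially the same route as the paper: part (i) via the triangle inequality plus Minkowski's integral inequality, and part (ii) via the representation $Z(t,s)-\mathrm{Diag}[Z](s)=-\int_t^s\partial_tZ(\tau,s)\,d\tau$, Minkowski's integral inequality, H\"older's inequality producing the factor $\ep^{1/q'-1/q}$, and the splitting of the remaining integral into a term controlled by the continuity of $\tau\mapsto\partial_tZ(\tau,\cdot)$ and a term $\|\1_{[t,t+\ep]}\partial_tZ(t,\cdot)\|_{L^{p,q}_\mathbb{F}(S,T;\mathbb{H})}\to0$. The only cosmetic difference is that you carry the exponents $(p',q')$ through an extra H\"older step in $\omega$, whereas the paper reduces to $p'=p$ at the outset; the estimates are otherwise identical.
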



\begin{proof}
(\rnum{1})\ \,By using Minkowski's integral inequality (the integral form of Minkowski's inequality) repeatedly, we see that
\begin{align*}
	\mathbb{E}\Bigl[\Bigl(\int^T_S\Bigl|\int^s_S\partial_tZ(\tau,s)\,d\tau\Bigr|^q\,ds\Bigr)^{p/q}\Bigr]^{1/p}&\leq\mathbb{E}\Bigl[\Bigl(\int^T_S\Bigl(\int^T_S|\partial_tZ(\tau,s)|\,d\tau\Bigr)^q\,ds\Bigr)^{p/q}\Bigr]^{1/p}\\
	&\leq\mathbb{E}\Bigl[\Bigl(\int^T_S\Bigl(\int^T_S|\partial_tZ(\tau,s)|^q\,ds\Bigr)^{1/q}\,d\tau\Bigr)^p\Bigr]^{1/p}\displaybreak[1]\\
	&\leq\int^T_S\mathbb{E}\Bigl[\Bigl(\int^T_S|\partial_tZ(\tau,s)|^q\,ds\Bigr)^{p/q}\Bigr]^{1/p}\,d\tau\\
	&\leq(T-S)\sup_{\tau\in[S,T]}\|\partial_tZ(\tau,\cdot)\|_{L^{p,q}_\mathbb{F}(S,T;\mathbb{H})}<\infty.
\end{align*}
From this estimate and the fact that $Z(S,\cdot)\in L^{p,q}_\mathbb{F}(S,T;\mathbb{H})$, we obtain the assertions in (\rnum{1}).\\
(\rnum{2})\ \,Without loss of generality we may assume that $p'=p$. Let $t\in[S,T)$ be fixed. Noting the equality~\ref{Z diagonal equality}, by using Minkowski's integral inequality and H\"{o}lder's inequality, we see that, for any $\ep\in(0,T-t]$,
\begin{align*}
	&\mathbb{E}\Bigl[\Bigl(\int^{t+\ep}_t\bigl|Z(t,s)-\mathrm{Diag}[Z](s)\bigr|^{q'}\,ds\Bigr)^{p/q'}\Bigr]^{1/p}\\
	&=\mathbb{E}\Bigl[\Bigl(\int^{t+\ep}_t\Bigl|\int^s_t\partial_tZ(\tau,s)\,d\tau\Bigr|^{q'}\,ds\Bigr)^{p/q'}\Bigr]^{1/p}\displaybreak[1]\\
	&\leq\mathbb{E}\Bigl[\Bigl(\int^{t+\ep}_t\Bigl(\int^{t+\ep}_\tau|\partial_tZ(\tau,s)|^{q'}\,ds\Bigr)^{1/q'}\,d\tau\Bigr)^p\Bigr]^{1/p}\displaybreak[1]\\
	&\leq\int^{t+\ep}_t\mathbb{E}\Bigl[\Bigl(\int^{t+\ep}_\tau|\partial_tZ(\tau,s)|^{q'}\,ds\Bigr)^{p/q'}\Bigr]^{1/p}\,d\tau\displaybreak[1]\\
	&\leq\ep^{1/q'-1/q}\int^{t+\ep}_t\mathbb{E}\Bigl[\Bigl(\int^{t+\ep}_t|\partial_tZ(\tau,s)|^q\,ds\Bigr)^{p/q}\Bigr]^{1/p}\,d\tau.
\end{align*}
Furthermore, by Minkowski's inequality, we obtain
\begin{align*}
	&\int^{t+\ep}_t\mathbb{E}\Bigl[\Bigl(\int^{t+\ep}_t|\partial_tZ(\tau,s)|^q\,ds\Bigr)^{p/q}\Bigr]^{1/p}\,d\tau\\
	&\leq\int^{t+\ep}_t\Bigl\{\mathbb{E}\Bigl[\Bigl(\int^{t+\ep}_t|\partial_tZ(t,s)|^q\,ds\Bigr)^{p/q}\Bigr]^{1/p}+\mathbb{E}\Bigl[\Bigl(\int^{t+\ep}_t|\partial_tZ(\tau,s)-\partial_tZ(t,s)|^q\,ds\Bigr)^{p/q}\Bigr]^{1/p}\Bigr\}\,d\tau\\
	&\leq\ep\Bigl\{\mathbb{E}\Bigl[\Bigl(\int^{t+\ep}_t|\partial_tZ(t,s)|^q\,ds\Bigr)^{p/q}\Bigr]^{1/p}+\sup_{\tau\in[t,t+\ep]}\|\partial_tZ(\tau,\cdot)-\partial_tZ(t,\cdot)\|_{L^{p,q}_\mathbb{F}(S,T;\mathbb{H})}\Bigr\}.
\end{align*}
Since $\partial_tZ(t,\cdot)\in L^{p,q}_\mathbb{F}(S,T;\mathbb{H})$, we have $\lim_{\ep\downarrow0}\mathbb{E}\Bigl[\Bigl(\int^{t+\ep}_t|\partial_tZ(t,s)|^q\,ds\Bigr)^{p/q}\Bigr]^{1/p}=0$. On the other hand, since the map $[S,T]\ni\tau\mapsto\partial_tZ(\tau,\cdot)\in L^{p,q}_\mathbb{F}(S,T;\mathbb{H})$ is continuous, we have $\lim_{\ep\downarrow0}\sup_{\tau\in[t,t+\ep]}\|\partial_tZ(\tau,\cdot)-\partial_tZ(t,\cdot)\|_{L^{p,q}_\mathbb{F}(S,T;\mathbb{H})}=0$.
Thus, the first assertion in (\rnum{2}) holds. In particular, if we take $p'=q'=1$, then it holds that
\begin{equation*}
	\mathbb{E}\Bigl[\int^{t+\ep}_t|Z(t,s)-\mathrm{Diag}[Z](s)|\,ds\Bigr]=o(\ep^{2-1/q}),\ \forall\,t\in[S,T).
\end{equation*}
This implies that $\mathrm{Diag}[Z](\cdot)$ satisfies Property~(D) with respect to $Z(\cdot,\cdot)$.
\end{proof}


\begin{rem}
\begin{enumerate}
\renewcommand{\labelenumi}{(\roman{enumi})}
\item
We emphasize that, for a given $Z(\cdot,\cdot)\in C^1([S,T];L^{p,q}_\mathbb{F}(S,T;\mathbb{H}))$, the naive definition ``$Z(s,s):=Z(s,u)|_{u=s}$'' still depends on the choice of a ``version'' (in the sense of \eqref{version Z}) of $Z(\cdot,\cdot)$, while Property~(D) does not. The above lemma implies that, if we define $\tilde{Z}(\cdot,\cdot)\in C^1([S,T];L^{p,q}_\mathbb{F}(S,T;\mathbb{H}))$ by
\begin{equation*}
	\tilde{Z}(t,s,\omega):=
	\begin{cases}
		Z(S,s,\omega)+\int^t_S\partial_tZ(\tau,s,\omega)\,d\tau\ &\text{if}\ \partial_tZ(\cdot,s,\omega)\in L^1(S,T;\mathbb{H}),\\
		0\ &\text{otherwise},
	\end{cases}
\end{equation*}
then it is a ``version'' of $Z(\cdot,\cdot)$ such that the process $\tilde{Z}(s,s)=\mathrm{Diag}[Z](s)$, $s\in[S,T]$, is well-defined and satisfies Property~(D) with respect to $Z(\cdot,\cdot)$. Furthermore, the above discussions are consistent with the arguments of the recent work by Hern\'{a}ndez--Possama\"{i}~\cite{a_Hernandez-Possamai_20'}.
\item
By the same arguments as in the above proof, we can also show that
\begin{align*}
	&\mathbb{E}\Bigl[\Bigl(\int^t_{t-\ep}\bigl|Z(t,s)-\mathrm{Diag}[Z](s)\bigr|^{q'}\,ds\Bigr)^{p'/q'}\Bigr]^{1/p'}=o(\ep^{1+1/q'-1/q}),\ \forall\,t\in(S,T],
\shortintertext{and}
	&\mathbb{E}\Bigl[\Bigl(\int^{t+\ep}_{t-\ep}\bigl|Z(t,s)-\mathrm{Diag}[Z](s)\bigr|^{q'}\,ds\Bigr)^{p'/q'}\Bigr]^{1/p'}=o(\ep^{1+1/q'-1/q}),\ \forall\,t\in(S,T),
\end{align*}
for any $1\leq p'\leq p$ and $1\leq q'\leq q$.
\end{enumerate}
\end{rem}


\subsection{Known results for BSDEs}

For $0\leq S<T<\infty$, consider the following BSDE on $[S,T]$:
\begin{equation}\label{BSDE}
	Y(s)=\psi+\int^T_sg(r,Y(r),Z(r))\,dr-\int^T_sZ(r)\,dW(r),\ s\in[S,T],
\end{equation}
where $(\psi,g)$ satisfies the following assumptions:


\begin{assum}\label{assumption: BSDE}
Fix $p\geq2$.
\begin{enumerate}
\renewcommand{\labelenumi}{(\roman{enumi})}
\item
$\psi\in L^p_{\mathcal{F}_T}(\Omega;\mathbb{R}^m)$.
\item
$g:\Omega\times[S,T]\times\mathbb{R}^m\times\mathbb{R}^{m\times d}\to\mathbb{R}^m$ is a measurable map such that
\begin{itemize}
\item
The process $(g(s,y,z))_{s\in[S,T]}$ is progressively measurable for each $y\in\mathbb{R}^m$ and $z\in\mathbb{R}^{m\times d}$;
\item
$g(\cdot,0,0)\in L^{p,1}_\mathbb{F}(S,T;\mathbb{R}^m)$;
\item
There exists a constant $L>0$ such that, for $\mathrm{Leb}_{[S,T]}\otimes\mathbb{P}$-a.e.\,$(s,\omega)\in[S,T]\times\Omega$, it holds that
\begin{equation*}
	|g(s,y_1,z_1)-g(s,y_2,z_2)|\leq L(|y_1-y_2|+|z_1-z_2|)
\end{equation*}
for any $y_1,y_2\in\mathbb{R}^m$ and $z_1,z_2\in\mathbb{R}^{m\times d}$.
\end{itemize}
\end{enumerate}
\end{assum}

We say that a pair $(Y(\cdot),Z(\cdot))$ is an $L^p$-adapted solution of BSDE~\eqref{BSDE} if $(Y(\cdot),Z(\cdot))\in\mathcal{H}^p_\mathbb{F}(S,T;\mathbb{R}^m\times\mathbb{R}^{m\times d})$ and the equality~\eqref{BSDE} holds a.s. for any $s\in[S,T]$. The following fact is well-known; see for example~\cite{b_Zhang_17}.


\begin{lemm}\label{lemma: BSDE}
Under Assumption~\ref{assumption: BSDE}, there exists a unique $L^p$-adapted solution $(Y(\cdot),Z(\cdot))\in\mathcal{H}^p_\mathbb{F}(S,T;\mathbb{R}^m\times\mathbb{R}^{m\times d})$ of BSDE~\eqref{BSDE}, and the following estimate holds:
\begin{equation*}
	\mathbb{E}\Bigl[\sup_{s\in[S,T]}|Y(s)|^p+\Bigl(\int^T_S|Z(s)|^2\,ds\Bigr)^{p/2}\Bigr]\leq C\mathbb{E}\Bigl[|\psi|^p+\Bigl(\int^T_S|g(s,0,0)|\,ds\Bigr)^p\Bigr].
\end{equation*}
For $i=1,2$, let $(\psi_i,g_i)$ satisfy Assumption~\ref{assumption: BSDE} and $(Y_i(\cdot),Z_i(\cdot))\in\mathcal{H}^p_\mathbb{F}(S,T;\mathbb{R}^m\times\mathbb{R}^{m\times d})$ be the unique $L^p$-adapted solution of BSDE~\eqref{BSDE} corresponding to $(\psi_i,g_i)$, respectively. Then it holds that
\begin{equation*}
\begin{split}
	&\mathbb{E}\Bigl[\sup_{s\in[S,T]}|Y_1(s)-Y_2(s)|^p+\Bigl(\int^T_S|Z_1(s)-Z_2(s)|^2\,ds\Bigr)^{p/2}\Bigr]\\
	&\leq C\mathbb{E}\Bigl[|\psi_1-\psi_2|^p+\Bigl(\int^T_S|g_1(s,Y_1(s),Z_1(s))-g_2(s,Y_1(s),Z_1(s))|\,ds\Bigr)^p\Bigr].
\end{split}
\end{equation*}
\end{lemm}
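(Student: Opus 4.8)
The plan is to prove Lemma~\ref{lemma: BSDE} by the classical Picard fixed-point argument for Lipschitz BSDEs, carried out in the $L^p$ framework. \textbf{Reformulation as a fixed point.} Given $(y(\cdot),z(\cdot))\in\mathcal{H}^p_\mathbb{F}(S,T;\mathbb{R}^m\times\mathbb{R}^{m\times d})$, I would freeze the generator and look at the affine equation $Y(s)=\psi+\int^T_sg(r,y(r),z(r))\,dr-\int^T_sZ(r)\,dW(r)$, $s\in[S,T]$. By Assumption~\ref{assumption: BSDE} the Lipschitz bound $|g(r,y(r),z(r))|\leq|g(r,0,0)|+L(|y(r)|+|z(r)|)$ and Lemma~\ref{lemma: regularity}\,(\rnum{1}) show that $\xi:=\psi+\int^T_Sg(r,y(r),z(r))\,dr\in L^p_{\mathcal{F}_T}(\Omega;\mathbb{R}^m)$. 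The martingale representation theorem applied to the continuous $L^p$-martingale $M(s):=\mathbb{E}_s[\xi]$ yields a unique $Z(\cdot)$ with $M(s)=M(S)+\int^s_SZ(r)\,dW(r)$, and the Burkholder--Davis--Gundy and Doob inequalities give $Z(\cdot)\in L^{p,2}_\mathbb{F}(S,T;\mathbb{R}^{m\times d})$; setting $Y(s):=M(s)-\int^s_Sg(r,y(r),z(r))\,dr$ produces a solution $(Y(\cdot),Z(\cdot))\in\mathcal{H}^p_\mathbb{F}$ of the affine equation. This defines a map $\Phi:(y,z)\mapsto(Y,Z)$ on $\mathcal{H}^p_\mathbb{F}$ whose fixed points are exactly the $L^p$-adapted solutions of \eqref{BSDE}.

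\textbf{A priori estimate, contraction, well-posedness.} The key technical step is the $L^p$ a priori estimate for the affine equation: applying It\^o's formula to $|Y(s)|^2$ (then raising to the power $p/2$) or directly to $|Y(s)|^p$, taking the supremum over $s$, and dominating the stochastic integral by BDG together with Young's inequality to absorb the $Z$-term, one obtains $\|Y,Z\|^p_{\mathcal{H}^p_\mathbb{F}(T',T)}\leq C\,\mathbb{E}\bigl[|\psi|^p+(\int^T_{T'}|g(r,0,0)|\,dr)^p\bigr]+C\,(T-T')^{\gamma}\,\|y,z\|^p_{\mathcal{H}^p_\mathbb{F}(T',T)}$ on any subinterval $[T',T]$, with $\gamma>0$ and $C$ depending only on $p$ and $L$. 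Applying this to the difference of two inputs — which solves the affine equation with zero free term and driver bounded by $L(|y_1-y_2|+|z_1-z_2|)$ — shows that $\Phi$ is a contraction on $\mathcal{H}^p_\mathbb{F}(T',T)$ once $T-T'$ is small (depending only on $p,L$). Concatenating over finitely many such subintervals covering $[S,T]$ gives existence and uniqueness on $[S,T]$, and feeding the fixed point itself into the a priori estimate and absorbing the $\|Y,Z\|$-term yields the first displayed inequality of the lemma.

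\textbf{Stability.} For the last assertion, the difference $(\delta Y,\delta Z):=(Y_1-Y_2,Z_1-Z_2)$ solves $\delta Y(s)=(\psi_1-\psi_2)+\int^T_s\bigl[g_1(r,Y_1(r),Z_1(r))-g_2(r,Y_2(r),Z_2(r))\bigr]\,dr-\int^T_s\delta Z(r)\,dW(r)$. Splitting the driver as $\bigl[g_1(r,Y_1,Z_1)-g_2(r,Y_1,Z_1)\bigr]+\bigl[g_2(r,Y_1,Z_1)-g_2(r,Y_2,Z_2)\bigr]$ and bounding the second bracket by $L(|\delta Y|+|\delta Z|)$, the same It\^o/BDG argument combined with Gronwall's lemma (again on short subintervals, then concatenated) yields the claimed estimate, with $\int^T_S|g_1(r,Y_1(r),Z_1(r))-g_2(r,Y_1(r),Z_1(r))|\,dr$ playing the role of the inhomogeneity.

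\textbf{Main obstacle.} The only genuinely delicate point will be the $L^p$ a priori estimate for general $p\geq2$ (rather than just $p=2$): one has to control $\mathbb{E}[\sup_s|Y(s)|^p]$ and $\mathbb{E}[(\int^T_S|Z(s)|^2\,ds)^{p/2}]$ simultaneously, which forces a careful interplay of BDG and Young's inequality to absorb the $Z$-term, and one must respect the weaker hypothesis $g(\cdot,0,0)\in L^{p,1}_\mathbb{F}(S,T;\mathbb{R}^m)$, so that the data enter through $(\int^T_S|g(r,0,0)|\,dr)^p$ and not $\int^T_S|g(r,0,0)|^p\,dr$. These computations are routine and are carried out in detail in, e.g., \cite{b_Zhang_17}, which is why the statement is recorded here as known.
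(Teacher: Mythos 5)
The paper offers no proof of this lemma at all: it is recorded as a known fact with a pointer to the textbook \cite{b_Zhang_17}, and your sketch is precisely the standard Picard/contraction argument (martingale representation for the frozen-generator equation, $L^p$ a priori estimate via It\^o--BDG--Young respecting the $L^{p,1}$ hypothesis on $g(\cdot,0,0)$, contraction on short subintervals, concatenation, and the same splitting for stability) that such references carry out. The proposal is correct and consistent with the result as stated; the only stylistic difference worth noting is that the paper, when it later proves the analogous EBSVIE result (Theorem~\ref{theorem: EBSVIE}), prefers an equivalent weighted norm $\|\cdot\|_\beta$ to obtain a contraction on the whole interval at once rather than subdividing $[S,T]$, and the same trick would work here too.
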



\section{Well-posedness and regularity of EBSVIEs}\label{section: EBSVIE}

Consider EBSVIE~\eqref{EBSVIE}. We impose the following assumptions:


\begin{assum}\label{assumption: EBSVIE}
Fix $p\geq2$.
\begin{enumerate}
\renewcommand{\labelenumi}{(\roman{enumi})}
\item
$\psi(\cdot)\in C([S,T];L^p_{\mathcal{F}_T}(\Omega;\mathbb{R}^m))$.
\item
$g:\Omega\times[S,T]^2\times\mathbb{R}^m\times\mathbb{R}^m\times\mathbb{R}^{m\times d}\to\mathbb{R}^m$ is a measurable map such that
\begin{itemize}
\item
The process $(g(t,s,\eta,y,z))_{s\in[S,T]}$ is progressively measurable for each $t\in[S,T]$, $\eta,y\in\mathbb{R}^m$ and $z\in\mathbb{R}^{m\times d}$;
\item
$\sup_{t\in[S,T]}\mathbb{E}\Bigl[\Bigl(\int^T_S|g(t,s,0,0,0)|\,ds\Bigr)^p\Bigr]<\infty$;
\item
There exists a constant $L>0$ such that, for any $t\in[S,T]$, for $\mathrm{Leb}_{[S,T]}\otimes\mathbb{P}$-a.e.\,$(s,\omega)\in[S,T]\times\Omega$, it holds that
\begin{equation*}
	|g(t,s,\eta_1,y_1,z_1)-g(t,s,\eta_2,y_2,z_2)|\leq L(|\eta_1-\eta_2|+|y_1-y_2|+|z_1-z_2|)
\end{equation*}
for any $\eta_1,\eta_2,y_1,y_2\in\mathbb{R}^m$ and $z_1,z_2\in\mathbb{R}^{m\times d}$;
\item
There exist two processes
\begin{equation*}
	k(\cdot,\cdot)\in C([S,T];L^{p,1}_\mathbb{F}(S,T;\mathbb{H}))\ \text{and}\ l(\cdot,\cdot)\in C_b([S,T];L^0_\mathbb{F}(S,T;\mathbb{G}))
\end{equation*}
with Euclidean spaces $\mathbb{H}$ and $\mathbb{G}$ such that, for any $t_1,t_2\in[S,T]$, for $\mathrm{Leb}_{[S,T]}\otimes\mathbb{P}$-a.e.\,$(s,\omega)\in[S,T]\times\Omega$, it holds that
\begin{equation}\label{weak continuity}
\begin{split}
	&|g(t_1,s,\eta,y,z)-g(t_2,s,\eta,y,z)|\\
	&\leq|k(t_1,s)-k(t_2,s)|+|l(t_1,s)-l(t_2,s)|(|\eta|+|y|+|z|)
\end{split}
\end{equation}
for any $\eta,y\in\mathbb{R}^m$ and $z\in\mathbb{R}^{m\times d}$.
\end{itemize}
\end{enumerate}
\end{assum}


\begin{rem}\label{remark: t-continuity}
Compared with \cite{a_Wang_20} and other previous researches on BSVIEs, the last assumption~\eqref{weak continuity} on the continuity of the generator $g$ with respect to $t\in[S,T]$ is new and weaker. In the literature, the continuity of $g$ with respect to $t\in[S,T]$ is assumed to be pointwise, that is,
\begin{equation}\label{pointwise continuity}
	|g(t_1,s,\eta,y,z)-g(t_2,s,\eta,y,z)|\leq\rho(|t_1-t_2|)(1+|\eta|+|y|+|z|)
\end{equation}
for some modulus of continuity $\rho:[0,\infty)\to[0,\infty)$. However, the EBSVIEs arising in Section~\ref{section: control problem} do not satisfy the continuity assumption~\eqref{pointwise continuity}, and hence they are beyond the literature. This is why we introduced the weaker continuity assumption with respect to $t$ in Assumption~\ref{assumption: EBSVIE}.
\end{rem}

We now introduce a concept of the solution of EBSVIE~\eqref{EBSVIE}.


\begin{defi}
We say that a pair $(Y(\cdot,\cdot),Z(\cdot,\cdot))$ is an $L^p$-adapted C-solution of EBSVIE~\eqref{EBSVIE} if $(Y(\cdot,\cdot),Z(\cdot,\cdot))\in \mathfrak{H}^p_\mathbb{F}(S,T;\mathbb{R}^m\times\mathbb{R}^{m\times d})$ and the equality~\eqref{EBSVIE} holds a.s. for any $s\in[S,T]$ and $t\in[S,T]$.
\end{defi}


\begin{rem}\label{remark: EBSVIE}
\begin{enumerate}
\renewcommand{\labelenumi}{(\roman{enumi})}
\item
Unlike \cite{a_Wang_20}, we consider the values not only on $S\leq t\leq s\leq T$ but also on $S\leq s<t\leq T$, because it clarifies the discussions for regularity of solutions with respect to $t\in[S,T]$. The term ``C'' is named after the continuity of the solution with respect to $t\in[S,T]$ (in the $L^p$-sense). The above definition of solutions is a generalization of the concept of adapted C-solutions of Type-\Rnum{1} BSVIEs introduced in \cite{a_Wang-Zhang_17} to EBSVIEs.
\item
If the generator $g(t,s,\eta,y,z)$ is independent of $\eta$, then EBSVIE~\eqref{EBSVIE} reduces to the (decoupled) family of BSDEs for $(Y(t,\cdot),Z(t,\cdot))$ on $[S,T]$ parametrized by $t\in[S,T]$. 
\item
If the generator $g(t,s,\eta,y,z)$ is independent of $y$, then EBSVIE~\eqref{EBSVIE} reduces to the following Type-\Rnum{1} BSVIE:
\begin{equation}\label{BSVIE}
	\eta(t)=\psi(t)+\int^T_tg(t,s,\eta(s),\zeta(t,s))\,ds-\int^T_t\zeta(t,s)\,dW(s),\ t\in[S,T].
\end{equation}
In this case, the $L^p$-adapted C-solution $(Y(\cdot,\cdot),Z(\cdot,\cdot))$ of EBSVIE~\eqref{EBSVIE} corresponds to the following. For each $t\in[S,T]$,
\begin{equation*}
	\begin{cases}
		Y(t,s)=\mathbb{E}_s\Bigl[\psi(t)+\int^T_sg(t,r,\eta(r),\zeta(t,r))\,dr\Bigr],\\
		Z(t,s)=\zeta(t,s),
	\end{cases}
	s\in[t,T],
\end{equation*}
and $(Y(t,s),Z(t,s))_{s\in[S,t]}\in\mathcal{H}^p_\mathbb{F}(S,t;\mathbb{R}^m\times\mathbb{R}^{m\times d})$ is the unique adapted solution of the BSDE
\begin{equation*}
	Y(t,s)=\eta(t)+\int^t_sg(t,r,\eta(r),Z(t,r))\,dr-\int^t_sZ(t,r)\,dW(r),\ s\in[S,t].
\end{equation*}
When $g(t,s,\eta,y,z)$ is independent of $y$, we say that a pair $(\eta(\cdot),\zeta(\cdot,\cdot))$ is an $L^p$-adapted C-solution of Type-\Rnum{1} BSVIE~\eqref{BSVIE} if
\begin{equation*}
	\begin{cases}
		\eta(t)=Y(t,t)\ &\text{a.s.},\ \forall\,t\in[S,T],\\
		\zeta(t,s)=Z(t,s)\ &\text{for}\ \mathrm{Leb}_{[S,T]}\otimes\mathbb{P}\text{-a.e.}\,(s,\omega)\in[S,T]\times\Omega,\ \forall\,t\in[S,T],
	\end{cases}
\end{equation*}
where $(Y(\cdot,\cdot),Z(\cdot,\cdot))\in\mathfrak{H}^p_\mathbb{F}(S,T;\mathbb{R}^m\times\mathbb{R}^{m\times d})$ is the $L^p$-adapted C-solution of EBSVIE~\eqref{EBSVIE} with the corresponding generator $g$.
\end{enumerate}
\end{rem}

The following theorem shows the existence, uniqueness, and a priori estimates of the $L^p$-adapted C-solution of EBSVIE~\eqref{EBSVIE}.


\begin{theo}\label{theorem: EBSVIE}
Let Assumption~\ref{assumption: EBSVIE} hold. Then there exists a unique $L^p$-adapted C-solution $(Y(\cdot,\cdot),Z(\cdot,\cdot))\in\mathfrak{H}^p_\mathbb{F}(S,T;\mathbb{R}^m\times\mathbb{R}^{m\times d})$ of EBSVIE~\eqref{EBSVIE}. Moreover, for any $t,t'\in[S,T]$, the following estimate holds:
\begin{equation}\label{EBSVIE estimate}
\begin{split}
	&\mathbb{E}\Bigl[\sup_{s\in[t',T]}|Y(t,s)|^p+\Bigl(\int^T_{t'}|Z(t,s)|^2\,ds\Bigr)^{p/2}\Bigr]\\
	&\leq C\Biggl\{\mathbb{E}\Bigl[|\psi(t)|^p+\Bigl(\int^T_{t'}|g_0(t,s)|\,ds\Bigr)^p\Bigr]+\int^T_{t'}\mathbb{E}\Bigl[|\psi(\tau)|^p+\Bigl(\int^T_{\tau}|g_0(\tau,s)|\,ds\Bigr)^p\Bigr]\,d\tau\Biggr\},
\end{split}
\end{equation}
where $g_0(t,s):=g(t,s,0,0,0)$.\\
For $i=1,2$, let $(\psi_i,g_i)$ satisfy Assumption~\ref{assumption: EBSVIE} and let $(Y_i(\cdot,\cdot),Z_i(\cdot,\cdot))\in\mathfrak{H}^p_\mathbb{F}(S,T;\mathbb{R}^m\times\mathbb{R}^{m\times d})$ be the unique $L^p$-adapted C-solution of EBSVIE~\eqref{EBSVIE} corresponding to $(\psi_i,g_i)$, respectively. Then it holds that, for any $t,t'\in[S,T]$,
\begin{equation}\label{EBSVIE stability}
\begin{split}
	&\mathbb{E}\Bigl[\sup_{s\in[t',T]}|Y_1(t,s)-Y_2(t,s)|^p+\Bigl(\int^T_{t'}|Z_1(t,s)-Z_2(t,s)|^2\,ds\Bigr)^{p/2}\Bigr]\\
	&\leq C\Biggl\{\mathbb{E}\Bigl[|\Delta\psi(t)|^p+\Bigl(\int^T_{t'}|\Delta g(t,s)|\,ds\Bigr)^p\Bigr]+\int^T_{t'}\mathbb{E}\Bigl[|\Delta\psi(\tau)|^p+\Bigl(\int^T_\tau|\Delta g(\tau,s)|\,ds\Bigr)^p\Bigr]\,d\tau\Biggr\},
\end{split}
\end{equation}
where $\Delta\psi(t):=\psi_1(t)-\psi_2(t)$, and
\begin{equation*}
	\Delta g(t,s):=g_1(t,s,Y_1(s,s),Y_1(t,s),Z_1(t,s))-g_2(t,s,Y_1(s,s),Y_1(t,s),Z_1(t,s)).
\end{equation*}
\end{theo}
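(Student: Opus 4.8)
The plan is to prove the theorem by a fixed-point argument on the Banach space $\mathfrak{H}^p_\mathbb{F}(S,T;\mathbb{R}^m\times\mathbb{R}^{m\times d})$, reducing EBSVIE~\eqref{EBSVIE} to a parametrized family of BSDEs. The key observation is that, if we freeze the ``diagonal'' argument $Y(r,r)$ by plugging in a known process $\eta(\cdot)\in C([S,T];L^p_{\mathcal{F}_T}(\Omega;\mathbb{R}^m))$ — more precisely, the diagonal $\eta(r):=U(r,r)$ of a given $U(\cdot,\cdot)\in C([S,T];L^p_\mathbb{F}(\Omega;C([S,T];\mathbb{R}^m)))$ — then for each fixed $t\in[S,T]$ the equation
\begin{equation*}
	Y(t,s)=\psi(t)+\int^T_sg(t,r,\eta(r),Y(t,r),Z(t,r))\,dr-\int^T_sZ(t,r)\,dW(r),\ s\in[S,T],
\end{equation*}
is a genuine BSDE in $(Y(t,\cdot),Z(t,\cdot))$ whose generator $(r,y,z)\mapsto g(t,r,\eta(r),y,z)$ satisfies Assumption~\ref{assumption: BSDE} (the free term $g(t,\cdot,\eta(\cdot),0,0)$ lies in $L^{p,1}_\mathbb{F}$ because of the Lipschitz bound and $\eta(\cdot)\in L^p_{\mathcal{F}_T}\subset L^{p,1}_\mathbb{F}$). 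Lemma~\ref{lemma: BSDE} then produces a unique solution; define the map $\Phi$ by $\Phi(U)(t,\cdot):=(Y(t,\cdot),Z(t,\cdot))$. The first substantive step is to check that $\Phi$ indeed maps $\mathfrak{H}^p_\mathbb{F}$ into itself: boundedness in the norm follows from the a priori BSDE estimate in Lemma~\ref{lemma: BSDE} applied $t$-by-$t$ and then taking $\sup_t$, while continuity of $t\mapsto(Y(t,\cdot),Z(t,\cdot))$ in the respective spaces is where the weak continuity assumption~\eqref{weak continuity} enters — one applies the BSDE stability estimate to the difference of the equations at $t_1$ and $t_2$, and the right-hand side is controlled by $\mathbb{E}[|\psi(t_1)-\psi(t_2)|^p]$ plus $\mathbb{E}[(\int_S^T(|k(t_1,s)-k(t_2,s)|+|l(t_1,s)-l(t_2,s)|(|\eta(s)|+|Y(t_1,s)|+|Z(t_1,s)|))\,ds)^p]$, which tends to $0$ as $t_2\to t_1$ by dominated convergence using $k\in C([S,T];L^{p,1}_\mathbb{F})$, $l\in C_b([S,T];L^0_\mathbb{F})$, Lemma~\ref{lemma: regularity}(ii), and the already-established uniform bounds on $(Y,Z)$.

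The second step is the contraction estimate. Given $U^1,U^2\in\mathfrak{H}^p_\mathbb{F}$ with diagonals $\eta^1,\eta^2$, apply the BSDE stability estimate of Lemma~\ref{lemma: BSDE} for each fixed $t$ to bound
\begin{equation*}
	\mathbb{E}\Bigl[\sup_{s\in[S,T]}|Y^1(t,s)-Y^2(t,s)|^p+\Bigl(\int^T_S|Z^1(t,s)-Z^2(t,s)|^2\,ds\Bigr)^{p/2}\Bigr]\leq C\,\mathbb{E}\Bigl[\Bigl(\int^T_S L|\eta^1(r)-\eta^2(r)|\,dr\Bigr)^p\Bigr].
\end{equation*}
Since $|\eta^1(r)-\eta^2(r)|\le\sup_{s}|U^1(r,s)-U^2(r,s)|$, the right-hand side is bounded by $C L^p (T-S)^{p-1}\int_S^T\mathbb{E}[\sup_s|U^1(r,s)-U^2(r,s)|^p]\,dr\le CL^p(T-S)^p\|U^1-U^2\|_{\mathfrak{H}^p_\mathbb{F}}^p$. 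This is a contraction on a sufficiently short interval; to get the result on all of $[S,T]$ one either iterates backward over subintervals $[T-\delta,T],[T-2\delta,T-\delta],\dots$ (gluing the $Z$-components is harmless since the diagonal only couples ``forward'' in $r$), or, more cleanly, runs the contraction on $\mathfrak{H}^p_\mathbb{F}$ with an equivalent weighted norm $\sup_t e^{-\beta(T-t)}\mathbb{E}[\cdots]^{1/p}$ — but note the coupling here is in the \emph{running} variable $r\in[s,T]$ through $Y(r,r)$, which is why the naive single-shot contraction needs the interval-length smallness; I would present the subdivision argument. The fixed point $(Y,Z)$ is then the unique $L^p$-adapted C-solution, after verifying that the C-solution of the original coupled equation must have $Y(r,r)$ equal to the diagonal of its own first component, so uniqueness in $\mathfrak{H}^p_\mathbb{F}$ transfers.

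For the a priori estimate~\eqref{EBSVIE estimate} and the stability estimate~\eqref{EBSVIE stability}, the idea is to feed the a priori/stability bounds of Lemma~\ref{lemma: BSDE} back into themselves. For fixed $t$, Lemma~\ref{lemma: BSDE} gives the left-hand side of~\eqref{EBSVIE estimate} (with $t'$ replaced by $S$ first; the localization to $[t',T]$ is obtained by running the BSDE on $[t',T]$ with terminal-type data at $T$ — here one uses that $Z$ on $[t',T]$ depends only on data on $[t',T]$ and on $Y(r,r)$ for $r\in[t',T]$) bounded by $C\mathbb{E}[|\psi(t)|^p+(\int_{t'}^T|g_0(t,s)|\,ds)^p]+C\mathbb{E}[(\int_{t'}^T L|Y(r,r)|\,dr)^p]$, and then $\mathbb{E}[(\int_{t'}^T|Y(r,r)|\,dr)^p]\le (T-S)^{p-1}\int_{t'}^T\mathbb{E}[|Y(r,r)|^p]\,dr\le (T-S)^{p-1}\int_{t'}^T\mathbb{E}[\sup_{s\in[r,T]}|Y(r,s)|^p]\,dr$; applying the (already proven, localized) bound for $\mathbb{E}[\sup_{s\in[r,T]}|Y(r,s)|^p]$ inside the integral and invoking Gronwall's inequality in the variable $t'$ (scanning from $T$ downward) closes the loop and yields~\eqref{EBSVIE estimate}. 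The stability estimate~\eqref{EBSVIE stability} is proved identically, using the difference-form of Lemma~\ref{lemma: BSDE} and the elementary inequality $|g_1(t,s,Y_1(s,s),Y_1(t,s),Z_1(t,s))-g_2(t,s,Y_2(s,s),Y_2(t,s),Z_2(t,s))|\le|\Delta g(t,s)|+L(|Y_1(s,s)-Y_2(s,s)|+|Y_1(t,s)-Y_2(t,s)|+|Z_1(t,s)-Z_2(t,s)|)$, absorbing the last two terms into the left-hand side (Lemma~\ref{lemma: BSDE} already does this) and treating the diagonal term $|Y_1(s,s)-Y_2(s,s)|$ by the same Gronwall argument as above. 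The main obstacle is bookkeeping the two-parameter dependence carefully — in particular justifying the localized estimate on $[t',T]$ and making sure the Gronwall argument in $t'$ is run in the right direction — rather than any single hard estimate; every individual bound is a routine consequence of Lemma~\ref{lemma: BSDE}, Minkowski, and Hölder.
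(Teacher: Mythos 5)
Your proposal is correct and follows the same core strategy as the paper: freeze the diagonal $Y(r,r)$, solve the resulting $t$-parametrized family of BSDEs via Lemma~\ref{lemma: BSDE}, verify that the map lands back in $\mathfrak{H}^p_\mathbb{F}$ using the weak continuity assumption~\eqref{weak continuity} together with Lemma~\ref{lemma: regularity}, run a fixed-point argument, and then obtain \eqref{EBSVIE estimate}--\eqref{EBSVIE stability} by closing a Gronwall loop on $\eta(t)=Y(t,t)$ (the paper sets $t'=t$ to get the closed inequality for $\mathbb{E}[|\eta(t)|^p]$ and then re-inserts the resulting bound, which is the same bookkeeping you describe). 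The one place you diverge is the globalization of the contraction: you opt for backward subdivision of $[S,T]$ (essentially Wang's original inductive method), and you express doubt about the weighted-norm alternative on the grounds that the coupling runs through $Y(r,r)$ for $r$ in the \emph{running} interval $[s,T]$. That doubt is unfounded, and the weighted norm is exactly what the paper uses: with the norm $\|\cdot\|_\beta$ that weights the $\sup_{s\in[t,T]}$ part by $e^{\beta t}$, one has $\mathbb{E}[|y(t,t)|^p]\leq e^{-\beta t}\|(y,z)\|_\beta^p$, so the coupling term contributes $Ce^{\beta t}\int_t^Te^{-\beta s}\,ds\leq C/\beta$ --- small for large $\beta$ precisely because the coupling is \emph{forward} in $r$, i.e.\ $r\geq t$. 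The weighted norm buys a single-shot contraction on all of $[S,T]$ and avoids the gluing of solutions across subintervals (in particular the slightly delicate matching of the $Z$-components and of the map $t\mapsto Z(t,\cdot)$ across the partition), while your subdivision route is equally valid but requires that extra bookkeeping; either way the remainder of your argument goes through.
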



\begin{rem}
Wang~\cite{a_Wang_20} showed the well-posedness of EBSVIE~\eqref{EBSVIE} under a stronger assumption. His method is firstly showing the existence and uniqueness of the solution of EBSVIE~\eqref{EBSVIE} (defined on $S\leq t\leq s\leq T$) when $T-S$ is small, and then connecting them inductively by considering an associated family of BSDEs (or stochastic Fredholm equations). On the other hand, our proof relies on a simple observation based on an equivalent norm $\|\cdot\|_\beta$ defined below. A similar technique can be seen in the literature of BSDEs (see for example \cite{a_ElKaroui-_97}), and in the literature of BSVIEs (see for example \cite{a_Shi-Wang_12,a_Shi-Wang-Yong_15}). We remark that the estimates~\eqref{EBSVIE estimate} and \eqref{EBSVIE stability} are more detailed than \cite{a_Wang_20}. Indeed, by letting $t'=t$ and then taking the supremum over $t\in[S,T]$, we get the estimates in Theorem~3.1 of \cite{a_Wang_20}.
\end{rem}


\begin{proof}[Proof of Theorem~\ref{theorem: EBSVIE}]
In this proof, $C>0$ denotes a universal constant which may vary from line to line. Let $(y(\cdot,\cdot),z(\cdot,\cdot))\in\mathfrak{H}^p_\mathbb{F}(S,T;\mathbb{R}^m\times\mathbb{R}^{m\times d})$ be given. For each $t\in[S,T]$, consider the following BSDE:
\begin{equation}\label{BSDE map}
	Y(t,s)=\psi(t)+\int^T_sg(t,r,y(r,r),Y(t,r),Z(t,r))\,dr-\int^T_sZ(t,r)\,dW(r),\ s\in[S,T].
\end{equation}
By Lemma~\ref{lemma: BSDE}, there exists a unique $L^p$-adapted solution $(Y(t,\cdot),Z(t,\cdot))\in\mathcal{H}^p_\mathbb{F}(S,T;\mathbb{R}^m\times\mathbb{R}^{m\times d})$ for any $t\in[S,T]$. Furthermore, by the stability estimate of $L^p$-adapted solutions of BSDEs, we have, for each $t,t_0\in[S,T]$,
\begin{align*}
	&\mathbb{E}\Bigl[\sup_{s\in[S,T]}|Y(t,s)-Y(t_0,s)|^p+\Bigl(\int^T_S|Z(t,s)-Z(t_0,s)|^2\,ds\Bigr)^{p/2}\Bigr]\\
	&\leq C\mathbb{E}\Bigl[|\psi(t)-\psi(t_0)|^p\\
	&\hspace{1.5cm}+\Bigl(\int^T_S|g(t,s,y(s,s),Y(t_0,s),Z(t_0,s))-g(t_0,s,y(s,s),Y(t_0,s),Z(t_0,s))|\,ds\Bigr)^p\Bigr]\displaybreak[1]\\
	&\leq C\mathbb{E}\Bigl[|\psi(t)-\psi(t_0)|^p+\Bigl(\int^T_S|k(t,s)-k(t_0,s)|\,ds\Bigr)^p\\
	&\hspace{1.5cm}+\Bigl(\int^T_S|l(t,s)-l(t_0,s)|(|y(s,s)|+|Y(t_0,s)|+|Z(t_0,s)|)\,ds\Bigr)^p\Bigr],
\end{align*}
where we used \eqref{weak continuity} in the second inequality. Thus, by using Lemma~\ref{lemma: regularity}, we get
\begin{equation*}
	\lim_{t\to t_0}\mathbb{E}\Bigl[\sup_{s\in[S,T]}|Y(t,s)-Y(t_0,s)|^p+\Bigl(\int^T_S|Z(t,s)-Z(t_0,s)|^2\,ds\Bigr)^{p/2}\Bigr]=0
\end{equation*}
for each $t_0\in[S,T]$. This implies that the maps $[S,T]\ni t\mapsto Y(t,\cdot)\in L^p_\mathbb{F}(\Omega;C([S,T];\mathbb{R}^m))$ and $[S,T]\ni t\mapsto Z(t,\cdot)\in L^{p,2}_\mathbb{F}(S,T;\mathbb{R}^{m\times d})$ are continuous. By replacing them with jointly measurable versions (see Lemma~\ref{lemma: measurable version}) if necessary, we have that $(Y(\cdot,\cdot),Z(\cdot,\cdot))\in\mathfrak{H}^p_\mathbb{F}(S,T;\mathbb{R}^m\times\mathbb{R}^{m\times d})$. Therefore, we can define the mapping $\Theta:\mathfrak{H}^p_\mathbb{F}(S,T;\mathbb{R}^m\times\mathbb{R}^{m\times d})\to\mathfrak{H}^p_\mathbb{F}(S,T;\mathbb{R}^m\times\mathbb{R}^{m\times d})$ by $\Theta((y(\cdot,\cdot),z(\cdot,\cdot)):=(Y(\cdot,\cdot),Z(\cdot,\cdot))$. It suffices to show that $\Theta$ has a unique fixed point. To show that, we introduce the following norm on $\mathfrak{H}^p_\mathbb{F}(S,T;\mathbb{R}^m\times\mathbb{R}^{m\times d})$ parametrized by $\beta>0$:
\begin{align*}
	&\|(y(\cdot,\cdot),z(\cdot,\cdot))\|_\beta:=\sup_{t\in[S,T]}\Bigl\{e^{\beta t}\mathbb{E}\Bigl[\sup_{s\in[t,T]}|y(t,s)|^p+\Bigl(\int^T_t|z(t,s)|^2\,ds\Bigr)^{p/2}\Bigr]\\
	&\hspace{6cm}+\mathbb{E}\Bigl[\sup_{s\in[S,t]}|y(t,s)|^p+\Bigl(\int^t_S|z(t,s)|^2\,ds\Bigr)^{p/2}\Bigr]\Bigr\}^{1/p}
\end{align*}
for $(y(\cdot,\cdot),z(\cdot,\cdot))\in\mathfrak{H}^p_\mathbb{F}(S,T;\mathbb{R}^m\times\mathbb{R}^{m\times d})$. It can be easily shown that, for any $\beta>0$, $\|\cdot\|_\beta$ is equivalent to the original norm $\|\cdot\|_{\mathfrak{H}^p_\mathbb{F}(S,T;\mathbb{R}^m\times\mathbb{R}^{m\times d})}$. Furthermore, for each $(y(\cdot,\cdot),z(\cdot,\cdot))\in\mathfrak{H}^p_\mathbb{F}(S,T;\mathbb{R}^m\times\mathbb{R}^{m\times d})$ and $t\in[S,T]$, it holds that
\begin{equation}\label{technical idea}
	\mathbb{E}\bigl[|y(t,t)|^p\bigr]\leq \mathbb{E}\Bigl[\sup_{s\in[t,T]}|y(t,s)|^p\Bigr]\leq e^{-\beta t}\|(y(\cdot,\cdot),z(\cdot,\cdot)\|^p_\beta.
\end{equation}
We prove that $\Theta$ is contractive under the norm $\|\cdot\|_\beta$ when $\beta>0$ is large enough. To do so, take arbitrary $(y(\cdot,\cdot),z(\cdot,\cdot))$ and $(\bar{y}(\cdot,\cdot),\bar{z}(\cdot,\cdot))$ from $\mathfrak{H}^p_\mathbb{F}(S,T;\mathbb{R}^m\times\mathbb{R}^{m\times d})$, and define
\begin{equation*}
	\begin{cases}
		(Y(\cdot,\cdot),Z(\cdot,\cdot)):=\Theta((y(\cdot,\cdot),z(\cdot,\cdot))),\\
		(\bar{Y}(\cdot,\cdot),\bar{Z}(\cdot,\cdot)):=\Theta((\bar{y}(\cdot,\cdot),\bar{z}(\cdot,\cdot))).
	\end{cases}
\end{equation*}
Let $t\in[S,T]$ be fixed. Then by Lemma~\ref{lemma: BSDE}, we have
\begin{align}
	\nonumber&e^{\beta t}\mathbb{E}\Bigl[\sup_{s\in[t,T]}|Y(t,s)-\bar{Y}(t,s)|^p+\Bigl(\int^T_t|Z(t,s)-\bar{Z}(t,s)|^2\,ds\Bigr)^{p/2}\Bigr]\\
	\nonumber&\leq Ce^{\beta t}\mathbb{E}\Bigl[\Bigl(\int^T_t|g(t,s,y(s,s),Y(t,s),Z(t,s))-g(t,s,\bar{y}(s,s),Y(t,s),Z(t,s))|\,ds\Bigr)^p\Bigr]\displaybreak[1]\\
	\nonumber&\leq Ce^{\beta t}\int^T_t\mathbb{E}\bigl[|y(s,s)-\bar{y}(s,s)|^p\bigr]\,ds\displaybreak[1]\\
	\nonumber&\leq Ce^{\beta t}\int^T_te^{-\beta s}\,ds\,\|(y(\cdot,\cdot),z(\cdot,\cdot))-(\bar{y}(\cdot,\cdot),\bar{z}(\cdot,\cdot))\|^p_\beta\displaybreak[1]\\
	\label{estimate 1}&\leq \frac{C}{\beta}\|(y(\cdot,\cdot),z(\cdot,\cdot))-(\bar{y}(\cdot,\cdot),\bar{z}(\cdot,\cdot))\|^p_\beta,
\end{align}
where we used \eqref{technical idea} in the third inequality. On the other hand, since $(Y(t,s),Z(t,s))_{s\in[S,t]}$ and $(\bar{Y}(t,s),\bar{Z}(t,s))_{s\in[S,t]}$ are the unique $L^p$-adapted solutions of BSDEs
\begin{align*}
	&Y(t,s)=Y(t,t)+\int^t_sg(t,r,y(r,r),Y(t,r),Z(t,r))\,dr-\int^t_sZ(t,r)\,dW(r),\ s\in[S,t],
\shortintertext{and}
	&\bar{Y}(t,s)=\bar{Y}(t,t)+\int^t_sg(t,r,\bar{y}(r,r),\bar{Y}(t,r),\bar{Z}(t,r))\,dr-\int^t_s\bar{Z}(t,r)\,dW(r),\ s\in[S,t],
\end{align*}
respectively, again by Lemma~\ref{lemma: BSDE}, we get
\begin{align*}
	&\mathbb{E}\Bigl[\sup_{s\in[S,t]}|Y(t,s)-\bar{Y}(t,s)|^p+\Bigl(\int^t_S|Z(t,s)-\bar{Z}(t,s)|^2\,ds\Bigr)^{p/2}\Bigr]\\
	&\leq C\mathbb{E}\Bigl[|Y(t,t)-\bar{Y}(t,t)|^p\\
	&\hspace{1cm}+\Bigl(\int^t_S|g(t,s,y(s,s),Y(t,s),Z(t,s))-g(t,s,\bar{y}(s,s),Y(t,s),Z(t,s))|\,ds\Bigr)^p\Bigr].
\end{align*}
By the estimate~\eqref{estimate 1}, it holds that, in particular,
\begin{equation*}
	\mathbb{E}\bigl[|Y(t,t)-\bar{Y}(t,t)|^p\bigr]\leq\frac{C}{\beta}\|(y(\cdot,\cdot),z(\cdot,\cdot))-(\bar{y}(\cdot,\cdot),\bar{z}(\cdot,\cdot))\|^p_\beta.
\end{equation*}
Moreover, we have
\begin{align*}
	&\mathbb{E}\Bigl[\Bigl(\int^t_S|g(t,s,y(s,s),Y(t,s),Z(t,s))-g(t,s,\bar{y}(s,s),Y(t,s),Z(t,s))|\,ds\Bigr)^p\Bigr]\\
	&\leq C\int^t_S\mathbb{E}\bigl[|y(s,s)-\bar{y}(s,s)|^p\bigr]\,ds\displaybreak[1]\\
	&\leq C\int^t_Se^{-\beta s}\,ds\,\|(y(\cdot,\cdot),z(\cdot,\cdot))-(\bar{y}(\cdot,\cdot),\bar{z}(\cdot,\cdot))\|^p_\beta\displaybreak[1]\\
	&\leq\frac{C}{\beta}\|(y(\cdot,\cdot),z(\cdot,\cdot))-(\bar{y}(\cdot,\cdot),\bar{z}(\cdot,\cdot))\|^p_\beta,
\end{align*}
where we used \eqref{technical idea} in the second inequality. Thus, we get
\begin{equation}\label{estimate 2}
\begin{split}
	&\mathbb{E}\Bigl[\sup_{s\in[S,t]}|Y(t,s)-\bar{Y}(t,s)|^p+\Bigl(\int^t_S|Z(t,s)-\bar{Z}(t,s)|^2\,ds\Bigr)^{p/2}\Bigr]\\
	&\leq\frac{C}{\beta}\|(y(\cdot,\cdot),z(\cdot,\cdot))-(\bar{y}(\cdot,\cdot),\bar{z}(\cdot,\cdot))\|^p_\beta.
\end{split}
\end{equation}
Note that, in the estimates~\eqref{estimate 1} and \eqref{estimate 2}, the constant $C>0$ does not depend on $t\in[S,T]$ and $\beta>0$. Consequently, we obtain
\begin{equation*}
	\|(Y(\cdot,\cdot),Z(\cdot,\cdot))-(\bar{Y}(\cdot,\cdot),\bar{Z}(\cdot,\cdot))\|^p_\beta\leq\frac{C}{\beta}\|(y(\cdot,\cdot),z(\cdot,\cdot))-(\bar{y}(\cdot,\cdot),\bar{z}(\cdot,\cdot))\|^p_\beta.
\end{equation*}
Therefore, if we take the parameter $\beta>0$ large enough, then the map $\Theta$ is contractive under the norm $\|\cdot\|_\beta$. Consequently, we see that EBSVIE~\eqref{EBSVIE} has a unique $L^p$-adapted C-solution.
\par
Next, we prove the estimate \eqref{EBSVIE estimate}. Let $(Y(\cdot,\cdot),Z(\cdot,\cdot))\in\mathfrak{H}^p_\mathbb{F}(S,T;\mathbb{R}^m\times\mathbb{R}^{m\times d})$ be the unique $L^p$-adapted C-solution of EBSVIE~\eqref{EBSVIE}. By letting $\eta(t):=Y(t,t)$, $t\in[S,T]$, we see that, for each $t\in[S,T]$, $(Y(t,\cdot),Z(t,\cdot))\in\mathcal{H}^p_\mathbb{F}(S,T;\mathbb{R}^m\times\mathbb{R}^{m\times d})$ is the unique $L^p$-adapted solution of the BSDE
\begin{equation*}
	Y(t,s)=\psi(t)+\int^T_sg(t,r,\eta(r),Y(t,r),Z(t,r))\,dr-\int^T_sZ(t,r)\,dW(r),\ s\in[S,T].
\end{equation*}
Thus, by Lemma~\ref{lemma: BSDE}, for any $t,t'\in[S,T]$,
\begin{align}\label{estimate YZ}
	\nonumber&\mathbb{E}\Bigl[\sup_{s\in[t',T]}|Y(t,s)|^p+\Bigl(\int^T_{t'}|Z(t,s)|^2\,ds\Bigr)^{p/2}\Bigr]\\
	\nonumber&\leq C\mathbb{E}\Bigl[|\psi(t)|^p+\Bigl(\int^T_{t'}|g(t,s,\eta(s),0,0)|\,ds\Bigr)^p\Bigr]\\
	&\leq C\mathbb{E}\Bigl[|\psi(t)|^p+\Bigl(\int^T_{t'}|g_0(t,s)|\,ds\Bigr)^p\Bigr]+C\int^T_{t'}\mathbb{E}\bigl[|\eta(\tau)|^p\bigr]\,d\tau,
\end{align}
where $g_0(t,s):=g(t,s,0,0,0)$. In particular, if we let $t'=t$, then we obtain
\begin{equation*}
	\mathbb{E}\bigl[|\eta(t)|^p\bigr]\leq C\mathbb{E}\Bigl[|\psi(t)|^p+\Bigl(\int^T_t|g_0(t,s)|\,ds\Bigr)^p\Bigr]+C\int^T_t\mathbb{E}\bigl[|\eta(\tau)|^p\bigr]\,d\tau,\ \forall\,t\in[S,T].
\end{equation*}
Then Gronwall's inequality yields that, for any $\tau\in[S,T]$,
\begin{equation}\label{estimate eta}
\begin{split}
	&\mathbb{E}\bigl[|\eta(\tau)|^p\bigr]\\
	&\leq C\Biggl\{\mathbb{E}\Bigl[|\psi(\tau)|^p+\Bigl(\int^T_\tau|g_0(\tau,s)|\,ds\Bigr)^p\Bigr]+\int^T_\tau\mathbb{E}\Bigl[|\psi(\tau')|^p+\Bigl(\int^T_{\tau'}|g_0(\tau',s)|\,ds\Bigr)^p\Bigr]\,d\tau'\Biggr\}.
\end{split}
\end{equation}
By inserting the estimate~\eqref{estimate eta} into \eqref{estimate YZ}, we obtain the estimate \eqref{EBSVIE estimate}. Similarly we can show the stability estimate \eqref{EBSVIE stability}.
\end{proof}

As a corollary, we obtain a similar result for a Type-\Rnum{1} BSVIE. For the solution concept of such a equation, see Remark~\ref{remark: EBSVIE}~(\rnum{3}).


\begin{cor}\label{corollary: BSVIE}
Let Assumption~\ref{assumption: EBSVIE} hold. Furthermore, assume that the generator $g(t,s,\eta,y,z)$ does not depend on $y$. Then there exists a unique $L^p$-adapted C-solution $(\eta(\cdot),\zeta(\cdot,\cdot))$ of BSVIE~\eqref{BSVIE}, and the following estimate holds:
\begin{equation}\label{BSVIE estimate}
	\sup_{t\in[S,T]}\mathbb{E}\Bigl[|\eta(t)|^p+\Bigl(\int^T_t|\zeta(t,s)|^2\,ds\Bigr)^{p/2}\Bigr]\leq C\sup_{t\in[S,T]}\mathbb{E}\Bigl[|\psi(t)|^p+\Bigl(\int^T_t|g(t,s,0,0)|\,ds\Bigr)^p\Bigr].
\end{equation}
For $i=1,2$, let $(\psi_i,g_i)$ satisfy Assumption~\ref{assumption: EBSVIE} with the generator $g_i(t,s,\eta,y,z)$ being independent of $y$. Let $(\eta_i(\cdot),\zeta_i(\cdot,\cdot))$ be the unique $L^p$-adapted C-solution of BSVIE~\eqref{BSVIE} corresponding to $(\psi_i,g_i)$. Then it holds that
\begin{equation}\label{BSVIE stability}
\begin{split}
	&\sup_{t\in[S,T]}\mathbb{E}\Bigl[|\eta_1(t)-\eta_2(t)|^p+\Bigl(\int^T_t|\zeta_1(t,s)-\zeta_2(t,s)|^2\,ds\Bigr)^{p/2}\Bigr]\\
	&\leq C\sup_{t\in[S,T]}\mathbb{E}\Bigl[|\psi_1(t)-\psi_2(t)|^p\\
	&\hspace{1.5cm}+\Bigl(\int^T_t|g_1(t,s,\eta_1(s),\zeta_1(t,s))-g_2(t,s,\eta_1(s),\zeta_1(t,s))|\,ds\Bigr)^p\Bigr].
\end{split}
\end{equation}
\end{cor}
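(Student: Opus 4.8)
The plan is to obtain this corollary directly from Theorem~\ref{theorem: EBSVIE} via the reduction recorded in Remark~\ref{remark: EBSVIE}~(\rnum{3}). Since $g(t,s,\eta,y,z)$ does not depend on $y$, the pair $(\psi,g)$ still satisfies Assumption~\ref{assumption: EBSVIE}, so Theorem~\ref{theorem: EBSVIE} yields a unique $L^p$-adapted C-solution $(Y(\cdot,\cdot),Z(\cdot,\cdot))\in\mathfrak{H}^p_\mathbb{F}(S,T;\mathbb{R}^m\times\mathbb{R}^{m\times d})$ of EBSVIE~\eqref{EBSVIE}. Putting $\eta(t):=Y(t,t)$ and $\zeta(t,s):=Z(t,s)$, the pair $(\eta(\cdot),\zeta(\cdot,\cdot))$ is, by the very definition given in Remark~\ref{remark: EBSVIE}~(\rnum{3}), the $L^p$-adapted C-solution of BSVIE~\eqref{BSVIE}; existence and uniqueness are therefore immediate from the existence and uniqueness of the associated EBSVIE C-solution.

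It remains to translate the estimates. For~\eqref{BSVIE estimate}, I would apply~\eqref{EBSVIE estimate} with $t'=t$. Since $\eta(t)=Y(t,t)$ and $\zeta(t,\cdot)=Z(t,\cdot)$, the left-hand side of~\eqref{BSVIE estimate} (before taking the supremum over $t$) is bounded by $\mathbb{E}[\sup_{s\in[t,T]}|Y(t,s)|^p+(\int_t^T|Z(t,s)|^2\,ds)^{p/2}]$, while on the right-hand side $g_0(t,s)=g(t,s,0,0,0)$ coincides with $g(t,s,0,0)$ because $g$ is $y$-independent. Bounding the term $\int_t^T\mathbb{E}[|\psi(\tau)|^p+(\int_\tau^T|g(\tau,s,0,0)|\,ds)^p]\,d\tau$ by $(T-S)\sup_{\tau\in[S,T]}\mathbb{E}[|\psi(\tau)|^p+(\int_\tau^T|g(\tau,s,0,0)|\,ds)^p]$ and then taking the supremum over $t\in[S,T]$, the right-hand side becomes $\leq C(1+T-S)\sup_{t\in[S,T]}\mathbb{E}[|\psi(t)|^p+(\int_t^T|g(t,s,0,0)|\,ds)^p]$; absorbing the factor $1+T-S$ into $C$ gives~\eqref{BSVIE estimate}.

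The stability estimate~\eqref{BSVIE stability} is obtained in exactly the same fashion from~\eqref{EBSVIE stability}: taking $t'=t$, substituting $\eta_1(s)=Y_1(s,s)$ and $\zeta_1(t,s)=Z_1(t,s)$, and using the $y$-independence of $g_1,g_2$, the quantity $\Delta g(t,s)$ appearing in Theorem~\ref{theorem: EBSVIE} reduces to $g_1(t,s,\eta_1(s),\zeta_1(t,s))-g_2(t,s,\eta_1(s),\zeta_1(t,s))$, and the same ``bound the $\tau$-integral by $(T-S)$ times its supremum, then take $\sup_t$'' step produces~\eqref{BSVIE stability}.

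There is no genuinely hard step: the corollary is a specialization of Theorem~\ref{theorem: EBSVIE} read through the dictionary of Remark~\ref{remark: EBSVIE}~(\rnum{3}). The only point that deserves a moment's care is verifying that the solution concept of BSVIE~\eqref{BSVIE} --- which is stated in terms of the diagonal $Y(t,t)$ and the section $Z(t,\cdot)$ of the EBSVIE C-solution --- indeed makes $(\eta(\cdot),\zeta(\cdot,\cdot))$ satisfy~\eqref{BSVIE} in the usual integral sense, i.e.\ that replacing $Y(s,s)$ by $\eta(s)$ in the generator recovers exactly~\eqref{BSVIE}; but this is precisely what Remark~\ref{remark: EBSVIE}~(\rnum{3}) spells out.
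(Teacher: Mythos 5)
Your proposal is correct and follows exactly the route the paper intends: the paper gives no separate proof of this corollary, treating it as the specialization of Theorem~\ref{theorem: EBSVIE} through the dictionary of Remark~\ref{remark: EBSVIE}~(\rnum{3}), with the estimates obtained by setting $t'=t$ in \eqref{EBSVIE estimate}--\eqref{EBSVIE stability}, bounding the $\tau$-integral by $(T-S)$ times its supremum, and taking the supremum over $t$ (precisely the reduction the author notes in the remark following Theorem~\ref{theorem: EBSVIE}). No gaps.
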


\par
\medskip
Next, we study the regularity of the solution $(Y(t,s),Z(t,s))$ of EBSVIE~\eqref{EBSVIE} with respect to $t\in[S,T]$. For the free term $\psi$ and the generator $g$, we further impose the following assumptions.


\begin{assum}\label{assumption: EBSVIE derivative}
Fix $p\geq2$.
\begin{enumerate}
\renewcommand{\labelenumi}{(\roman{enumi})}
\item
$\psi(\cdot)\in C^1([S,T];L^p_{\mathcal{F}_T}(\Omega;\mathbb{R}^m))$.
\item
$g$ satisfies Assumption~\ref{assumption: EBSVIE}~(\rnum{2}). Moreover, the following hold.
\begin{itemize}
\item
For any $t\in[S,T]$, for $\mathrm{Leb}_{[S,T]}\otimes\mathbb{P}$-a.e.\,$(s,\omega)\in[S,T]\times\Omega$, and for any $\eta\in\mathbb{R}^m$, the function $(y,z)\mapsto g(t,s,\eta,y,z)$ is differentiable. Moreover, there exist a process $l(\cdot,\cdot)\in C_b([S,T];L^0_\mathbb{F}(S,T;\mathbb{H}))$ with a Euclidean space $\mathbb{H}$ and a modulus of continuity $\rho:[0,\infty)\to[0,\infty)$ such that, for any $t_1,t_2\in[S,T]$, for $\mathrm{Leb}_{[S,T]}\otimes\mathbb{P}$-a.e.\,$(s,\omega)\in[S,T]\times\Omega$, it holds that
\begin{align*}
	&|\partial_{(y,z)}g(t_1,s,\eta,y_1,z_1)-\partial_{(y,z)}g(t_2,s,\eta,y_2,z_2)|\\
	&\leq |l(t_1,s)-l(t_2,s)|+\rho(|y_1-y_2|+|z_1-z_2|)
\end{align*}
for any $\eta,y_1,y_2\in\mathbb{R}^m$ and $z_1,z_2\in\mathbb{R}^{m\times d}$;
\item
There exists a measurable function $\partial_tg$ satisfying Assumption~\ref{assumption: EBSVIE}~(\rnum{2}) such that, for any $t_1,t_2\in[S,T]$, for $\mathrm{Leb}_{[S,T]}\otimes\mathbb{P}$-a.e.\,$(s,\omega)\in[S,T]\times\Omega$, it holds that
\begin{equation*}
	g(t_1,s,\eta,y,z)-g(t_2,s,\eta,y,z)=\int^{t_1}_{t_2}\partial_tg(\tau,s,\eta,y,z)\,d\tau
\end{equation*}
for any $\eta,y\in\mathbb{R}^m$ and $z\in\mathbb{R}^{m\times d}$.
\end{itemize}
\end{enumerate}
\end{assum}

Suppose that Assumption~\ref{assumption: EBSVIE derivative} holds. Let $(Y(\cdot,\cdot),Z(\cdot,\cdot))\in\mathfrak{H}^p_\mathbb{F}(S,T;\mathbb{R}^m\times\mathbb{R}^{m\times d})$ be the unique $L^p$-adapted C-solution of EBSVIE~\eqref{EBSVIE}. Consider the following linear EBSVIE for $(\mathcal{Y}(\cdot,\cdot),\mathcal{Z}(\cdot,\cdot))$:
\begin{equation}\label{EBSVIE derivative}
\begin{split}
	\mathcal{Y}(t,s)=&\partial_t\psi(t)+\int^T_s\bigl(g_t(t,r)+g_y(t,r)\mathcal{Y}(t,r)+\sum^d_{j=1}g_{z_j}(t,r)\mathcal{Z}_j(t,r)\bigr)\,dr\\
	&\hspace{2cm}-\int^T_s\mathcal{Z}(t,r)\,dW(r),\ s\in[S,T],\ t\in[S,T],
\end{split}
\end{equation}
where, for each $z\in\mathbb{R}^{m\times d}$, $z_j\in\mathbb{R}^m$ denotes the $j$-th column, and
\begin{equation*}
	\begin{cases}
		g_t(t,r):=\partial_tg(t,r,Y(r,r),Y(t,r),Z(t,r)),\\
		g_y(t,r):=\partial_yg(t,r,Y(r,r),Y(t,r),Z(t,r)),\\
		g_{z_j}(t,r):=\partial_{z_j}g(t,r,Y(r,r),Y(t,r),Z(t,r)),\ j=1,\dots,d.
	\end{cases}
\end{equation*}
Observe that $\partial_t\psi(\cdot)\in C([S,T];L^p_{\mathcal{F}_T}(\Omega;\mathbb{R}^m))$, $g_t(\cdot,\cdot)\in C([S,T];L^{p,1}_\mathbb{F}(S,T;\mathbb{R}^m))$, and
\begin{equation*}
	g_y(\cdot,\cdot),\,g_{z_j}(\cdot,\cdot)\in C_b([S,T];L^0_\mathbb{F}(S,T;\mathbb{R}^{m\times m})),\ j=1,\dots,d.
\end{equation*}
Thus, the coefficients of EBSVIE~\eqref{EBSVIE derivative} satisfy Assumption~\ref{assumption: EBSVIE}, and hence there exists a unique $L^p$-adapted C-solution $(\mathcal{Y}(\cdot,\cdot),\mathcal{Z}(\cdot,\cdot))\in\mathfrak{H}^p_\mathbb{F}(S,T;\mathbb{R}^m\times\mathbb{R}^{m\times d})$. In fact, the equation \eqref{EBSVIE derivative} is just a family of (decoupled) BSDEs parametrized by $t\in[S,T]$. The next result shows that, under the above assumption, the function $t\mapsto(Y(t,\cdot),Z(t,\cdot))$ is differentiable (as a Banach space-valued function) and $(\mathcal{Y}(\cdot,\cdot),\mathcal{Z}(\cdot,\cdot))$ coincides with its derivative.


\begin{theo}\label{theorem: EBSVIE derivative}
Let Assumption~\ref{assumption: EBSVIE derivative} hold. Let $(Y(\cdot,\cdot),Z(\cdot,\cdot))\in\mathfrak{H}^p_\mathbb{F}(S,T;\mathbb{R}^m\times\mathbb{R}^{m\times d})$ and $(\mathcal{Y}(\cdot,\cdot),\mathcal{Z}(\cdot,\cdot))\in\mathfrak{H}^p_\mathbb{F}(S,T;\mathbb{R}^m\times\mathbb{R}^{m\times d})$ be the $L^p$-adapted C-solutions of EBSVIE~\eqref{EBSVIE} and \eqref{EBSVIE derivative}, respectively. Then
\begin{itemize}
\item
the Banach space-valued function $t\mapsto Y(t,\cdot)$ is in $C^1([S,T];L^p_\mathbb{F}(\Omega;C([S,T];\mathbb{R}^m)))$ with the derivative $\partial_tY(\cdot,\cdot)=\mathcal{Y}(\cdot,\cdot)$, and
\item
the Banach space-valued function $t\mapsto Z(t,\cdot)$ is in $C^1([S,T];L^{p,2}_\mathbb{F}(S,T;\mathbb{R}^{m\times d}))$ with the derivative $\partial_tZ(\cdot,\cdot)=\mathcal{Z}(\cdot,\cdot)$.
\end{itemize}
\end{theo}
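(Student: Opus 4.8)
The plan is to realise $(\mathcal{Y}(t,\cdot),\mathcal{Z}(t,\cdot))$ as the limit of the difference quotients of $t\mapsto(Y(t,\cdot),Z(t,\cdot))$. Fix $t\in[S,T]$ and, for $h\neq0$ with $t+h\in[S,T]$, put $\Delta^hY(t,\cdot):=h^{-1}(Y(t+h,\cdot)-Y(t,\cdot))$ and $\Delta^hZ(t,\cdot):=h^{-1}(Z(t+h,\cdot)-Z(t,\cdot))$. The goal is to show
\begin{equation*}
	\lim_{h\to0}\mathbb{E}\Bigl[\sup_{s\in[S,T]}\bigl|\Delta^hY(t,s)-\mathcal{Y}(t,s)\bigr|^p+\Bigl(\int^T_S\bigl|\Delta^hZ(t,s)-\mathcal{Z}(t,s)\bigr|^2\,ds\Bigr)^{p/2}\Bigr]=0.
\end{equation*}
This gives the existence of the derivatives $\partial_tY(t,\cdot)=\mathcal{Y}(t,\cdot)$ in $L^p_\mathbb{F}(\Omega;C([S,T];\mathbb{R}^m))$ and $\partial_tZ(t,\cdot)=\mathcal{Z}(t,\cdot)$ in $L^{p,2}_\mathbb{F}(S,T;\mathbb{R}^{m\times d})$ (one-sided at $t=S,T$; the cases $h>0$ and $h<0$ are symmetric); and since the coefficients of the linear EBSVIE~\eqref{EBSVIE derivative} satisfy Assumption~\ref{assumption: EBSVIE}, Theorem~\ref{theorem: EBSVIE} gives $(\mathcal{Y}(\cdot,\cdot),\mathcal{Z}(\cdot,\cdot))\in\mathfrak{H}^p_\mathbb{F}(S,T;\mathbb{R}^m\times\mathbb{R}^{m\times d})$, i.e.\ $t\mapsto\mathcal{Y}(t,\cdot)$ and $t\mapsto\mathcal{Z}(t,\cdot)$ are continuous into the stated spaces, which is precisely the continuity of the derivatives. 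Thus the whole statement reduces to the displayed limit. Here $\eta(r):=Y(r,r)$ is the diagonal process, which is progressively measurable since $Y(\cdot,\cdot)\in C([S,T];L^p_\mathbb{F}(\Omega;C([S,T];\mathbb{R}^m)))$ and, crucially, does not depend on the slice parameter.

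For each fixed $t$, both $(Y(t,\cdot),Z(t,\cdot))$ and $(Y(t+h,\cdot),Z(t+h,\cdot))$ solve BSDEs on $[S,T]$ with the common frozen input $\eta(\cdot)$, as established in the proof of Theorem~\ref{theorem: EBSVIE}. Expanding the generator increment as a pure $t$-increment at $(\eta(r),Y(t+h,r),Z(t+h,r))$ plus a spatial increment at time $t$, and using the integral representation of $g(t+h,\cdot)-g(t,\cdot)$ through $\partial_tg$ together with the fundamental theorem of calculus in the $(y,z)$-variables (both from Assumption~\ref{assumption: EBSVIE derivative}~(\rnum{2})), one sees that $(\Delta^hY(t,\cdot),\Delta^hZ(t,\cdot))$ solves a linear BSDE with terminal datum $h^{-1}(\psi(t+h)-\psi(t))$ and generator $(r,y,z)\mapsto c^h(r)+a^h(r)y+\sum_{j=1}^db^h_j(r)z_j$, where
\begin{equation*}
	a^h(r):=\int^1_0\partial_yg\bigl(t,r,\eta(r),Y(t,r)+\theta h\Delta^hY(t,r),Z(t,r)+\theta h\Delta^hZ(t,r)\bigr)\,d\theta,
\end{equation*}
$b^h_j$ is defined analogously with $\partial_{z_j}g$ in place of $\partial_yg$, and $c^h(r):=h^{-1}\int^{t+h}_t\partial_tg(\tau,r,\eta(r),Y(t+h,r),Z(t+h,r))\,d\tau$. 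Subtracting the fixed-$t$ equation~\eqref{EBSVIE derivative}, the pair $(D^hY,D^hZ):=(\Delta^hY(t,\cdot)-\mathcal{Y}(t,\cdot),\Delta^hZ(t,\cdot)-\mathcal{Z}(t,\cdot))$ solves a linear BSDE with the same coefficients $a^h,b^h_j$ (each bounded by the Lipschitz constant $L$), terminal datum $\xi^h:=h^{-1}(\psi(t+h)-\psi(t))-\partial_t\psi(t)$, and free term
\begin{equation*}
	e^h(r):=\bigl(c^h(r)-g_t(t,r)\bigr)+\bigl(a^h(r)-g_y(t,r)\bigr)\mathcal{Y}(t,r)+\sum^d_{j=1}\bigl(b^h_j(r)-g_{z_j}(t,r)\bigr)\mathcal{Z}_j(t,r),
\end{equation*}
which is readily seen to lie in $L^{p,1}_\mathbb{F}(S,T;\mathbb{R}^m)$. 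The a priori estimate of Lemma~\ref{lemma: BSDE} then yields
\begin{equation*}
	\mathbb{E}\Bigl[\sup_{s\in[S,T]}|D^hY(s)|^p+\Bigl(\int^T_S|D^hZ(s)|^2\,ds\Bigr)^{p/2}\Bigr]\leq C\,\mathbb{E}\Bigl[|\xi^h|^p+\Bigl(\int^T_S|e^h(r)|\,dr\Bigr)^p\Bigr],
\end{equation*}
so it remains to drive the right-hand side to $0$ as $h\to0$.

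Since $\psi\in C^1([S,T];L^p_{\mathcal{F}_T}(\Omega;\mathbb{R}^m))$, $\mathbb{E}[|\xi^h|^p]\to0$ is immediate, so the task is to show $e^h\to0$ in $L^{p,1}_\mathbb{F}(S,T;\mathbb{R}^m)$. By Theorem~\ref{theorem: EBSVIE}, $Y(t+h,\cdot)\to Y(t,\cdot)$ in $L^p_\mathbb{F}(\Omega;C([S,T];\mathbb{R}^m))$ and $Z(t+h,\cdot)\to Z(t,\cdot)$ in $L^{p,2}_\mathbb{F}(S,T;\mathbb{R}^{m\times d})$, hence in measure $\mathrm{Leb}_{[S,T]}\otimes\mathbb{P}$. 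Using the modulus-of-continuity bound on $\partial_{(y,z)}g$ from Assumption~\ref{assumption: EBSVIE derivative}~(\rnum{2}) together with $\theta\le1$ and monotonicity of $\rho$, one obtains $|a^h(r)-g_y(t,r)|+\sum_j|b^h_j(r)-g_{z_j}(t,r)|\le(1+d)\rho\bigl(|Y(t+h,r)-Y(t,r)|+|Z(t+h,r)-Z(t,r)|\bigr)$, which tends to $0$ in measure and is uniformly bounded (by $2L(1+d)$); Lemma~\ref{lemma: regularity}~(\rnum{2}), applied with $\mathcal{Y}(t,\cdot)\in L^p_\mathbb{F}(\Omega;C)\subset L^{p,1}_\mathbb{F}$ and $\mathcal{Z}_j(t,\cdot)\in L^{p,2}_\mathbb{F}\subset L^{p,1}_\mathbb{F}$, then forces the last two groups of terms in $e^h$ to $0$ in $L^{p,1}_\mathbb{F}$. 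For $c^h-g_t(t,\cdot)$, split $\partial_tg(\tau,r,\eta(r),Y(t+h,r),Z(t+h,r))-\partial_tg(t,r,\eta(r),Y(t,r),Z(t,r))$ into the $\tau$-increment (spatial variables frozen at $(\eta(r),Y(t+h,r),Z(t+h,r))$) and the spatial increment at time $t$: by Assumption~\ref{assumption: EBSVIE derivative}~(\rnum{2}), $\partial_tg$ satisfies Assumption~\ref{assumption: EBSVIE}~(\rnum{2}), so the former is dominated by $|k(\tau,r)-k(t,r)|+|l(\tau,r)-l(t,r)|(|\eta(r)|+|Y(t+h,r)|+|Z(t+h,r)|)$ with the associated $k\in C([S,T];L^{p,1}_\mathbb{F})$, $l\in C_b([S,T];L^0_\mathbb{F})$, and tends to $0$ in $L^{p,1}_\mathbb{F}$ after averaging over $\tau\in[t,t+h]$ via Lemma~\ref{lemma: regularity}, while the latter is dominated by $L(|Y(t+h,r)-Y(t,r)|+|Z(t+h,r)-Z(t,r)|)\to0$ in $L^{p,1}_\mathbb{F}$ by Theorem~\ref{theorem: EBSVIE}. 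Thus $e^h\to0$ in $L^{p,1}_\mathbb{F}$, $\mathbb{E}[(\int^T_S|e^h(r)|\,dr)^p]\to0$, and the proof is complete.

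I expect the main obstacle to be this last step: coordinating the \emph{weak} $t$-continuity~\eqref{weak continuity} of $\partial_tg$, the \emph{strong} $C^1$-in-$t$ regularity of $\psi$, the $L^{p,q}_\mathbb{F}$-convergence of $(Y(t+h,\cdot),Z(t+h,\cdot))$, and the embeddings/dominated-convergence lemma (Lemma~\ref{lemma: regularity}) so as to push the non-uniform coefficients $a^h,b^h_j,c^h$ into their $L^{p,1}_\mathbb{F}$-limits. Writing down the difference-quotient BSDE, deducing $C^1$-regularity from the displayed limit, and handling the one-sided derivatives at the endpoints and the two signs of $h$ are routine.
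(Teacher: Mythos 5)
Your proposal is correct and follows essentially the same route as the paper's proof: the same difference-quotient BSDE with coefficients obtained by averaging $\partial_{(y,z)}g$ along the segment and $\partial_tg$ over $[t,t+h]$, the same a priori estimate from Lemma~\ref{lemma: BSDE}, and the same three-way treatment of the error terms via the weak $t$-continuity bound~\eqref{weak continuity} for $\partial_tg$, the modulus-of-continuity assumption on $\partial_{(y,z)}g$, and Lemma~\ref{lemma: regularity}. The only cosmetic difference is that you make explicit the deduction of $C^1$-regularity from $(\mathcal{Y},\mathcal{Z})\in\mathfrak{H}^p_\mathbb{F}$, which the paper leaves implicit.
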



\begin{proof}
Fix $t\in[S,T]$. For each $h\neq0$ such that $t+h\in[S,T]$, define
\begin{equation*}
	\begin{cases}
		\Delta^h\mathcal{Y}(t,s):=\frac{1}{h}\bigl(Y(t+h,s)-Y(t,s)\bigr)-\mathcal{Y}(t,s),\\
		\Delta^h\mathcal{Z}(t,s):=\frac{1}{h}\bigl(Z(t+h,s)-Z(t,s)\bigr)-\mathcal{Z}(t,s).
	\end{cases}
\end{equation*}
We show that
\begin{equation}\label{differentiation limit}
	\lim_{h\to0}\mathbb{E}\Bigl[\sup_{s\in[S,T]}\bigl|\Delta^h\mathcal{Y}(t,s)\bigr|^p+\Bigl(\int^T_S\bigl|\Delta^h\mathcal{Z}(t,s)\bigr|^2\,ds\Bigr)^{p/2}\Bigr]=0.
\end{equation}
Simple calculations show that, for any $s\in[S,T]$,
\begin{align*}
	&\Delta^h\mathcal{Y}(t,s)\\
	&=\frac{1}{h}\bigl(\psi(t+h)-\psi(t)\bigr)-\partial_t\psi(t)-\int^T_s\Delta^h\mathcal{Z}(t,r)\,dW(r)\\
	&\hspace{0.5cm}+\int^T_s\Bigl\{\frac{1}{h}\bigl(g(t+h,r,Y(r,r),Y(t+h,r),Z(t+h,r))-g(t,r,Y(r,r),Y(t,r),Z(t,r))\bigr)\\
	&\hspace{2cm}-g_t(t,r)-g_y(t,r)\mathcal{Y}(t,r)-\sum^d_{j=1}g_{z_j}(t,r)\mathcal{Z}_j(t,r)\Bigr\}\,dr\displaybreak[1]\\
	&=\frac{1}{h}\bigl(\psi(t+h)-\psi(t)\bigr)-\partial_t\psi(t)-\int^T_s\Delta^h\mathcal{Z}(t,r)\,dW(r)\\
	&\hspace{0.5cm}+\int^T_s\Bigl\{\tilde{g}^h_t(t,r)-g_t(t,r)+\bigl(\tilde{g}^h_y(t,r)-g_y(t,r)\bigr)\mathcal{Y}(t,r)+\sum^d_{j=1}\bigl(\tilde{g}^h_{z_j}(t,r)-g_{z_j}(t,r)\bigr)\mathcal{Z}_j(t,r)\\
	&\hspace{2cm}+\tilde{g}^h_y(t,r)\Delta^h\mathcal{Y}(t,r)+\sum^d_{j=1}\tilde{g}^h_{z_j}(t,r)\Delta^h\mathcal{Z}_j(t,r)\Bigr\}\,dr,
\end{align*}
where
\begin{align*}
	&\tilde{g}^h_t(t,r):=\frac{1}{h}\int^{t+h}_t\partial_tg(\tau,r,Y(r,r),Y(t+h,r),Z(t+h,r))\,d\tau,\displaybreak[1]\\
	&\tilde{g}^h_y(t,r):=\int^1_0\partial_yg\bigl(t,r,Y(r,r),Y(t,r)+\lambda\bigl(Y(t+h,r)-Y(t,r)\bigr),\\
	&\hspace{5cm}Z(t,r)+\lambda\bigl(Z(t+h,r)-Z(t,r)\bigr)\bigr)\,d\lambda
\shortintertext{and}
	&\tilde{g}^h_{z_j}(t,r):=\int^1_0\partial_{z_j}g\bigl(t,r,Y(r,r),Y(t,r)+\lambda\bigl(Y(t+h,r)-Y(t,r)\bigr),\\
	&\hspace{5cm}Z(t,r)+\lambda\bigl(Z(t+h,r)-Z(t,r)\bigr)\bigr)\,d\lambda.
\end{align*}
Note that $\tilde{g}^h_y(\cdot,\cdot)$ and $\tilde{g}^h_{z_j}(\cdot,\cdot)$ are bounded uniformly in $h$. Thus, by the standard estimate of the solution of the BSDE, we see that there exists a constant $C>0$ such that, for any $h\neq0$,
\begin{align*}
	&\mathbb{E}\Bigl[\sup_{s\in[S,T]}\bigl|\Delta^h\mathcal{Y}(t,s)\bigr|^p+\Bigl(\int^T_S\bigl|\Delta^h\mathcal{Z}(t,s)\bigr|^2\,ds\Bigr)^{p/2}\Bigr]\\
	&\leq C\mathbb{E}\Bigl[\Bigl|\frac{1}{h}\bigl(\psi(t+h)-\psi(t)\bigr)-\partial_t\psi(t)\Bigr|^p+\Bigl(\int^T_S\bigl|\tilde{g}^h_t(t,s)-g_t(t,s)\bigr|\,ds\Bigr)^p\\
	&\hspace{1.5cm}+\Bigl(\int^T_S\Bigl\{\bigl|\tilde{g}^h_y(t,s)-g_y(t,s)\bigr|\bigl|\mathcal{Y}(t,s)\bigr|+\sum^d_{j=1}\bigl|\tilde{g}^h_{z_j}(t,s)-g_{z_j}(t,s)\bigr|\bigl|\mathcal{Z}(t,s)\bigr|\Bigr\}\,ds\Bigr)^p\Bigr].
\end{align*}
Clearly it holds that $\lim_{h\to0}\mathbb{E}\Bigl[\Bigl|\frac{1}{h}\bigl(\psi(t+h)-\psi(t)\bigr)-\partial_t\psi(t)\Bigr|^p\Bigr]=0$. Suppose that $h>0$. Since $\partial_tg$ satisfies Assumption~\ref{assumption: EBSVIE}~(\rnum{2}), we see that
\begin{align*}
	&\mathbb{E}\Bigl[\Bigl(\int^T_S\bigl|\tilde{g}^h_t(t,s)-g_t(t,s)\bigr|\,ds\Bigr)^p\Bigr]\\
	&\leq\mathbb{E}\Bigl[\Bigl(\int^T_S\frac{1}{h}\int^{t+h}_t\bigl|\partial_tg(\tau,s,Y(s,s),Y(t+h,s),Z(t+h,s))\\
	&\hspace{4cm}-\partial_tg(t,s,Y(s,s),Y(t,s),Z(t,s))\bigr|\,d\tau\,ds\Bigr)^p\Bigr]\displaybreak[1]\\
	&\leq\mathbb{E}\Bigl[\Bigl(\int^T_S\Bigl\{L\bigl(|Y(t+h,s)-Y(t,s)|+|Z(t+h,s)-Z(t,s)|\bigr)\\
	&\hspace{2cm}+\frac{1}{h}\int^{t+h}_t|k(\tau,s)-k(t,s)|\,d\tau\\
	&\hspace{2cm}+\frac{1}{h}\int^{t+h}_t|l(\tau,s)-l(t,s)|\bigl(|Y(s,s)|+|Y(t,s)|+|Z(t,s)|\bigr)\,d\tau\Bigr\}\,ds\Bigr)^p\Bigr],
\end{align*}
where $L>0$, $k(\cdot,\cdot)\in C([S,T];L^{p,1}_\mathbb{F}(S,T;\mathbb{H}))$ and $l(\cdot,\cdot)\in C_b([S,T];L^0_\mathbb{F}(S,T;\mathbb{G}))$ are given in Assumption~\ref{assumption: EBSVIE}~(\rnum{2}) with respect to $\partial_tg$. By using the continuity property of the map $t\mapsto(Y(t,\cdot),Z(t,\cdot))$, we see that
\begin{equation*}
	\lim_{h\downarrow0}\mathbb{E}\Bigl[\Bigl(\int^T_S\bigl(|Y(t+h,s)-Y(t,s)|+|Z(t+h,s)-Z(t,s)|\bigr)\,ds\Bigr)^p\Bigr]=0.
\end{equation*}
Furthermore, by using Fubini's theorem and H\"{o}lder's inequality, we have
\begin{align*}
	\mathbb{E}\Bigl[\Bigl(\int^T_S\frac{1}{h}\int^{t+h}_t|k(\tau,s)-k(t,s)|\,d\tau\,ds\Bigr)^p\Bigr]&=\mathbb{E}\Bigl[\Bigl(\frac{1}{h}\int^{t+h}_t\int^T_S|k(\tau,s)-k(t,s)|\,ds\,d\tau\Bigr)^p\Bigr]\\
	&\leq\frac{1}{h}\int^{t+h}_t\mathbb{E}\Bigl[\Bigl(\int^T_S|k(\tau,s)-k(t,s)|\,ds\Bigr)^p\Bigr]\,d\tau\\
	&\leq\sup_{\tau\in[t,t+h]}\mathbb{E}\Bigl[\Bigl(\int^T_S|k(\tau,s)-k(t,s)|\,ds\Bigr)^p\Bigr]\overset{h\downarrow0}{\longrightarrow}0.
\end{align*}
By the same calculation as above and Lemma~\ref{lemma: regularity}, we see that
\begin{align*}
	&\mathbb{E}\Bigl[\Bigl(\int^T_S\frac{1}{h}\int^{t+h}_t|l(\tau,s)-l(t,s)|\bigl(|Y(s,s)|+|Y(t,s)|+|Z(t,s)|\bigr)\,d\tau\,ds\Bigr)^p\Bigr]\\
	&\leq\sup_{\tau\in[t,t+h]}\mathbb{E}\Bigl[\Bigl(\int^T_S|l(\tau,s)-l(t,s)|\bigl(|Y(s,s)|+|Y(t,s)|+|Z(t,s)|\bigr)\,ds\Bigr)^p\Bigr]\overset{h\downarrow0}{\longrightarrow}0.
\end{align*}
Therefore, we obtain
\begin{equation*}
	\lim_{h\downarrow0}\mathbb{E}\Bigl[\Bigl(\int^T_S\bigl|\tilde{g}^h_t(t,s)-g_t(t,s)\bigr|\,ds\Bigr)^p\Bigr]=0.
\end{equation*}
By the same way, we can show that $\lim_{h\uparrow0}\mathbb{E}\Bigl[\Bigl(\int^T_S\bigl|\tilde{g}^h_t(t,s)-g_t(t,s)\bigr|\,ds\Bigr)^p\Bigr]=0$. On the other hand, since the map $t\mapsto(Y(t,\cdot),Z(t,\cdot))$ is continuous in $L^0_\mathbb{F}(S,T;\mathbb{R}^m\times\mathbb{R}^{m\times d})$, by the assumption, we see that $|\tilde{g}^h_y(t,\cdot)-g_y(t,\cdot)\bigr|$ and $\sum^d_{j=1}|\tilde{g}^h_{z_j}(t,\cdot)-g_{z_j}(t,\cdot)\bigr|$ tend to zero as $h\to0$ in $L^0_\mathbb{F}(S,T;\mathbb{R})$. Since these two terms are uniformly bounded, again by Lemma~\ref{lemma: regularity}, we see that
\begin{equation*}
	\lim_{h\to0}\mathbb{E}\Bigl[\Bigl(\int^T_S\Bigl\{\bigl|\tilde{g}^h_y(t,s)-g_y(t,s)\bigr|\bigl|\mathcal{Y}(t,s)\bigr|+\sum^d_{j=1}\bigl|\tilde{g}^h_{z_j}(t,s)-g_{z_j}(t,s)\bigr|\bigl|\mathcal{Z}(t,s)\bigr|\Bigr\}\,ds\Bigr)^p\Bigr]=0.
\end{equation*}
Consequently, we get \eqref{differentiation limit} and finish the proof.
\end{proof}


\begin{rem}
From the above result and Lemma~\ref{lemma: Z diagonal}, under Assumption~\ref{assumption: EBSVIE derivative}, the diagonal process $\mathrm{Diag}[Z](\cdot)\in L^{p,2}_\mathbb{F}(S,T;\mathbb{R}^{m\times d})$ is well-defined and satisfies Property~(D) with respect to $Z(\cdot,\cdot)$. This consequence provides an interesting generalization where the generator $g$ depends also on $\mathrm{Diag}[Z](\cdot)$, that is, the following equation:
\begin{equation}\label{generalization}
\begin{split}
	&Y(t,s)=\psi(t)+\int^T_sg(t,r,Y(r,r),\mathrm{Diag}[Z](r),Y(t,r),Z(t,r))\,dr-\int^T_sZ(t,r)\,dW(r),\\
	&\hspace{5cm}s\in[S,T],\ t\in[S,T].
\end{split}
\end{equation}
Wang--Yong~\cite{a_HWang-Yong_19} studied a similar equation (in a Markovian setting) in view of a generalization of the Feynman--Kac formula. From our discussions, for the sake of the well-definedness of $\mathrm{Diag}[Z](\cdot)$, we can guess that the ``solution'' $(Y(\cdot,\cdot),Z(\cdot,\cdot))$ of \eqref{generalization} should be regular in an appropriate sense. Indeed, after we submitted the first version of this paper, Hern\'{a}ndez--Possama\"{i}~\cite{a_Hernandez-Possamai_20'} reported a relevant result on this issue. They assumed differentiability conditions which are similar to Assumption~\ref{assumption: EBSVIE derivative}, and showed the well-posedness of the generalized equation \eqref{generalization} by considering a coupled system of $(Y(\cdot,\cdot),Z(\cdot,\cdot))$ and their derivatives $(\partial_tY(\cdot,\cdot),\partial_tZ(\cdot,\cdot))$, together with an auxiliary BSDE which corresponds to the dynamics of $Y(s,s)$. Compared with the discussions of \cite{a_Hernandez-Possamai_20'}, in this paper, we firstly established the well-posedness of \eqref{EBSVIE} under Assumption~\ref{assumption: EBSVIE} without differentiability conditions, and then proved the regularity of the solution under Assumption~\ref{assumption: EBSVIE derivative}. We remark that, in our setting, EBSVIE~\eqref{EBSVIE derivative} is consistent with the dynamics of the derivatives $(\partial_tY(\cdot,\cdot),\partial_tZ(\cdot,\cdot))$ appearing in \cite{a_Hernandez-Possamai_20'}.
\end{rem}


\section{Time-inconsistent stochastic recursive control problems}\label{section: control problem}

In this and the next sections, we investigate, in the open-loop framework, a time-inconsistent stochastic recursive control problem where the cost functional is defined by the solution of a Type-\Rnum{1} BSVIE. Let $T>0$ be a finite time horizon. In the following, for the sake of simplicity of notation, we assume that $d=1$, that is, the Brownian motion $W(\cdot)$ is one-dimensional. Our results can be easily generalized to the case of a general $d\in\mathbb{N}$.
\par
For each $\tau\in[0,T)$, define the set of admissible controls on $[\tau,T]$ by
\begin{equation*}
	\mathcal{U}[\tau,T]:=\{u:\Omega\times[\tau,T]\to U\,|\,u(\cdot)\ \text{is progressively measurable}\},
\end{equation*}
where $(U,d)$ is a separable metric space. We define the set of initial conditions by
\begin{equation*}
	\mathcal{I}:=\left\{(\tau,x_\tau)\relmiddle|\tau\in[0,T),\ x_\tau\in\bigcap_{p\geq1}L^p_{\mathcal{F}_\tau}(\Omega;\mathbb{R}^n)\right\}.
\end{equation*}
For each initial condition $(\tau,x_\tau)\in\mathcal{I}$ and control process $u(\cdot)\in\mathcal{U}[\tau,T]$, the corresponding ($\mathbb{R}^n$-valued) state process $X(\cdot)=X(\cdot;\tau,x_\tau;u(\cdot))$ is defined by the solution to the following SDE:
\begin{equation}\label{state SDE}
	\begin{cases}
		dX(s)=b\bigl(s,u(s),X(s)\bigr)\,ds+\sigma\bigl(s,u(s),X(s)\bigr)\,dW(s),\ s\in[\tau,T],\\
		X(\tau)=x_\tau.
	\end{cases}
\end{equation}
Define the cost functional by
\begin{equation*}
	J(\tau,x_\tau;u(\cdot)):=Y(\tau)
\end{equation*}
where $(Y(\cdot),Z(\cdot,\cdot))=(Y(\cdot;\tau,x_\tau;u(\cdot)),Z(\cdot,\cdot;\tau,x_\tau;u(\cdot)))$ is the adapted C-solution to the following ($\mathbb{R}$-valued) Type-\Rnum{1} BSVIE:
\begin{equation}\label{cost BSVIE}
	Y(t)=h\bigl(t,X(T)\bigr)+\int^T_tf\bigl(t,s,u(s),X(s),Y(s),Z(t,s)\bigr)\,ds-\int^T_tZ(t,s)\,dW(s),\ t\in[\tau,T].
\end{equation}
Our definition of $J(\tau,x_\tau;u(\cdot))$ is a recursive cost functional with general (non-exponential) discounting; see \cite{a_HWang-Yong_19}. We impose the following assumptions on the coefficients:


\begin{assum}[on SDE~\eqref{state SDE}]\label{assumption: state SDE}
\begin{enumerate}
\renewcommand{\labelenumi}{(\roman{enumi})}
\item
The maps $b,\,\sigma: [0,T]\times U\times\mathbb{R}^n\to\mathbb{R}^n$ are measurable, and there exist a constant $L>0$ and a modulus of continuity $\rho:[0,\infty)\to[0,\infty)$ such that for $\varphi=b,\sigma$, we have
\begin{equation*}
	\begin{cases}
		|\varphi(s,u_1,x_1)-\varphi(s,u_2,x_2)|\leq L|x_1-x_2|+\rho(d(u_1,u_2)),\\
		\hspace{4cm}\forall\,s\in[0,T],\ x_1,x_2\in\mathbb{R}^n,\ u_1,u_2\in U;\\
		|\varphi(s,u,0)|\leq L,\ \forall\,(s,u)\in[0,T]\times U.
	\end{cases}
\end{equation*}
\item
The maps $b$ and $\sigma$ are $C^2$ in $x$. Moreover, there exist a constant $L>0$ and a modulus of continuity $\rho:[0,\infty)\to[0,\infty)$ such that for $\varphi=b,\sigma$, we have
\begin{equation*}
	\begin{cases}
		|\partial_x\varphi(s,u_1,x_1)-\partial_x\varphi(s,u_2,x_2)|\leq L|x_1-x_2|+\rho(d(u_1,u_2)),\\
		|\partial^2_x\varphi(s,u_1,x_1)-\partial^2_x\varphi(s,u_2,x_2)|\leq \rho(|x_1-x_2|+d(u_1,u_2)),\\
		\hspace{4cm}\forall\,s\in[0,T],\ x_1,x_2\in\mathbb{R}^n,\ u_1,u_2\in U.
	\end{cases}
\end{equation*}
\end{enumerate}
\end{assum}

\begin{assum}[on BSVIE~\eqref{cost BSVIE}]\label{assumption: cost BSVIE}
\begin{enumerate}
\renewcommand{\labelenumi}{(\roman{enumi})}
\item
The maps $h:[0,T]\times\mathbb{R}^n\to\mathbb{R}$ and $f:[0,T]^2\times U\times\mathbb{R}^n\times\mathbb{R}\times\mathbb{R}\to\mathbb{R}$ are measurable, and there exist a constant $L>0$ and a modulus of continuity $\rho:[0,\infty)\to[0,\infty)$ such that for $\varphi(t,s,u,x,y,z)=h(t,x),f(t,s,u,x,y,z)$, we have
\begin{equation}\label{cost assumption 1}
	\begin{cases}
		|\varphi(t,s,u_1,x_1,y_1,z_1)-\varphi(t,s,u_2,x_2,y_2,z_2)|\\
		\leq L(|x_1-x_2|+|y_1-y_2|+|z_1-z_2|)+\rho(d(u_1,u_2)),\\
		\hspace{2cm}\forall\,(t,s)\in[0,T]^2,\ x_1,x_2\in\mathbb{R}^n,\ y_1,y_2,z_1,z_2\in\mathbb{R},\ u_1,u_2\in U;\\
		|\varphi(t,s,u,0,0,0)|\leq L,\ \forall\,(t,s,u)\in[0,T]^2\times U.
	\end{cases}
\end{equation}
\item
The map $h$ is $C^2$ in $x\in\mathbb{R}^n$ and the map $f$ is $C^2$ in $(x,y,z)\in\mathbb{R}^n\times\mathbb{R}\times\mathbb{R}$. Moreover, there exist a constant $L>0$ and a modulus of continuity $\rho:[0,\infty)\to[0,\infty)$ such that for $\varphi(t,s,u,x,y,z)=h(t,x),f(t,s,u,x,y,z)$, we have
\begin{equation*}
	\begin{cases}
		|D\varphi(t,s,u_1,x_1,y_1,z_1)-D\varphi(t,s,u_2,x_2,y_2,z_2)|\\
		\leq L(|x_1-x_2|+|y_1-y_2|+|z_1-z_2|)+\rho(d(u_1,u_2)),\\
		\hspace{2cm}\forall\,(t,s)\in[0,T]^2,\ x_1,x_2\in\mathbb{R}^n,\ y_1,y_2,z_1,z_2\in\mathbb{R},\ u_1,u_2\in U;\\
		|D^2\varphi(t,s,u_1,x_1,y_1,z_1)-D^2\varphi(t,s,u_2,x_2,y_2,z_2)|\\
		\leq\rho(|x_1-x_2|+|y_1-y_2|+|z_1-z_2|+d(u_1,u_2)),\\
		\hspace{2cm}\forall\,(t,s)\in[0,T]^2,\ x_1,x_2\in\mathbb{R}^n,\ y_1,y_2,z_1,z_2\in\mathbb{R},\ u_1,u_2\in U.
	\end{cases}
\end{equation*}
where $D\varphi$ is the gradient of $\varphi$ with respect to $(x,y,z)$ and $D^2\varphi$ is the Hessian matrix of $\varphi$ with respect to $(x,y,z)$.
\item
There exists a modulus of continuity $\rho:[0,\infty)\to[0,\infty)$ such that for $\varphi(t,s,u,x,y,z)=h(t,x),\partial_xh(t,x),\partial^2_xh(t,x),f(t,s,u,x,y,z),Df(t,s,u,x,y,z),D^2f(t,s,u,x,y,z)$, we have
\begin{equation}\label{cost assumption 2}
\begin{split}
	&|\varphi(t_1,s,u,x,y,z)-\varphi(t_2,s,u,x,y,z)|\leq\rho(|t_1-t_2|)\bigl(1+|x|+|y|+|z|\bigr),\\
	&\hspace{3cm}\forall\,t_1,t_2,s\in[0,T],\ (x,y,z)\in\mathbb{R}^n\times\mathbb{R}\times\mathbb{R},\ u\in U.
\end{split}
\end{equation}
\item
The maps $h,\partial_xh,f,Df$ are $C^1$ in $t\in[0,T]$. Moreover, there exist a constant $L>0$ and a modulus of continuity $\rho:[0,\infty)\to[0,\infty)$ such that for $\varphi(t,s,u,x,y,z)=\partial_th(t,x),\partial_t\partial_xh(t,x),\partial_tf(t,s,u,x,y,z),\partial_tDf(t,s,u,x,y,z)$, we have \eqref{cost assumption 1} and \eqref{cost assumption 2}.
\end{enumerate}
\end{assum}

Under Assumption~\ref{assumption: state SDE}, for each initial condition $(\tau,x_\tau)\in\mathcal{I}$ and control process $u(\cdot)\in\mathcal{U}[\tau,T]$, SDE \eqref{state SDE} has a unique strong solution $X(\cdot)$ which satisfies
\begin{equation*}
	\mathbb{E}\Biggl[\sup_{s\in[\tau,T]}|X(s)|^p\Biggr]\leq C\Bigl(1+\mathbb{E}\bigl[|x_\tau|^p\bigr]\Bigr)<\infty, \ \forall\,p\geq2.
\end{equation*}
Moreover, under Assumption~\ref{assumption: cost BSVIE}, the free term $\psi(t)=h(t,X(T))$ and the generator
\begin{equation*}
	g(t,s,\eta,y,z)=f\bigl(t,s,u(s),X(s),\eta,z\bigr)
\end{equation*}
satisfy Assumption~\ref{assumption: EBSVIE derivative} for any $p\geq2$. Therefore, by Corollary~\ref{corollary: BSVIE}, BSVIE~\eqref{cost BSVIE} has a unique $L^p$-adapted C-solution $(Y(\cdot),Z(\cdot,\cdot))$ for any $p\geq2$. Consequently, the cost functional is well-defined and finite a.s.\ for any $\tau\in[0,T)$. Furthermore, by Theorem~\ref{theorem: EBSVIE derivative}, the map $[\tau,T]\ni t\mapsto Z(t,\cdot)\in L^{p,2}_\mathbb{F}(\tau,T;\mathbb{R})$ is continuously differentiable for any $p\geq2$. Therefore, by Lemma~\ref{lemma: Z diagonal}, there exists a unique process $\mathrm{Diag}[Z](\cdot)\in \bigcap_{p\geq2}L^{p,2}_\mathbb{F}(\tau,T;\mathbb{R})$ such that, for any $p'\geq1$ and $1\leq q'\leq2$, it holds that
\begin{equation*}
	\mathbb{E}\Bigl[\Bigl(\int^{t+\ep}_t\bigl|Z(t,s)-\mathrm{Diag}[Z](s)\bigr|^{q'}\,ds\Bigr)^{p'/q'}\Bigr]^{1/p'}=o(\ep^{1/2+1/q'}),\ \forall\,t\in[\tau,T).
\end{equation*}
\par
Our problem is to find a control process $\hat{u}(\cdot)$ which minimizes the cost functional. However, it is well-known that the problem is time-inconsistent in general. That is, even if $\hat{u}(\cdot)\in\mathcal{U}[t_0,T]$ is an optimal control with respect to a given initial condition $(t_0,x_{t_0})\in\mathcal{I}$, for a future time $t_1\in[t_0,T)$, the restriction $\hat{u}|_{[t_1,T]}(\cdot)$ of $\hat{u}(\cdot)$ on the later time interval $[t_1,T]$ is no longer optimal with respect to the corresponding initial condition $(t_1,X(t_1;t_0,x_{t_0};\hat{u}(\cdot)))\in\mathcal{I}$. For more detailed discussions on the time-inconsistency, see for example \cite{a_Yan-Yong_19}. Instead of seeking for a global optimal control (which does not exist in general), we investigate an open-loop equilibrium control defined as follows.


\begin{defi}\label{definition: open-loop equilibrium control}
Let $(t_0,x_{t_0})\in\mathcal{I}$ be given. We say that a control process $\hat{u}(\cdot)\in\mathcal{U}[t_0,T]$ is an open-loop equilibrium control with respect to the initial condition $(t_0,x_{t_0})$ if, for any $v(\cdot)\in\mathcal{U}[t_0,T]$, any $\tau\in[t_0,T)$, and any nonnegative, bounded and $\mathcal{F}_\tau$-measurable random variable $\xi_\tau$, it holds that
\begin{equation*}
	\liminf_{\ep\downarrow0}\mathbb{E}\Biggl[\frac{J(\tau,\hat{X}(\tau);u^\pert(\cdot))-J(\tau,\hat{X}(\tau);\hat{u}|_{[\tau,T]}(\cdot))}{\ep}\,\xi_\tau\Biggr]\geq0,
\end{equation*}
where $\hat{X}(\cdot):=X(\cdot;t_0,x_{t_0};\hat{u}(\cdot))$, $\hat{u}|_{[\tau,T]}(\cdot)\in\mathcal{U}[\tau,T]$ is the restriction of $\hat{u}(\cdot)$ on $[\tau,T]$, and $u^\pert(\cdot)\in\mathcal{U}[\tau,T]$ is defined by
\begin{equation}\label{perturbed control}
	u^\pert(s):=
	\begin{cases}
		v(s)\ \text{for}\ s\in[\tau,\tau+\ep),\\
		\hat{u}(s)\ \text{for}\ s\in[\tau+\ep,T].
	\end{cases}
\end{equation}
\end{defi}


\begin{rem}
\begin{enumerate}
\renewcommand{\labelenumi}{(\roman{enumi})}
\item
The above definition is slightly different from the original definition given by Hu--Jin--Zhou~\cite{a_Hu-Jin-Zhou_12,a_Hu-Jin-Zhou_17}, where the ``definition'' of an open-loop equilibrium control is given by
\begin{equation}\label{definition'}
	\liminf_{\ep\downarrow0}\frac{J(\tau,\hat{X}(\tau);u^\pert(\cdot))-J(\tau,\hat{X}(\tau);\hat{u}|_{[\tau,T]}(\cdot))}{\ep}\geq0\ \text{a.s.}
\end{equation}
However, since $\{J(\tau,\hat{X}(\tau);u^\pert(\cdot))\}_{\ep>0}$ is an uncountable family of random variables, and the a.s.\,limit as $\ep\downarrow0$ along the whole $\ep>0$ may not be well-defined, the above ``definition'' is not suitable for our problem. Note that if there exists a modification of the family $\{J(\tau,\hat{X}(\tau);u^\pert(\cdot))\}_{\ep>0}$ which is a.s.\ continuous (with respect to $\ep$), then \eqref{definition'} makes sense for such a modification. If furthermore the family $\Bigl\{\Bigl(\frac{J(\tau,\hat{X}(\tau);u^\pert(\cdot))-J(\tau,\hat{X}(\tau);\hat{u}|_{[\tau,T]}(\cdot))}{\ep}\Bigr)^{-}\Bigr\}_{\ep>0}$ is uniformly integrable, then by Fatou's lemma we see that \eqref{definition'} implies our definition. However, since the existence of the continuous modification is questionable, we should avoid to use \eqref{definition'} as the definition in our problem. Thus, we defined an open-loop equilibrium control by a weak sense, which is well-defined in general. In fact, it turns out that if $\hat{u}(\cdot)\in\mathcal{U}[t_0,T]$ is an open-loop equilibrium control with respect to $(t_0,x_{t_0})\in\mathcal{I}$ in the sense of Definition~\ref{definition: open-loop equilibrium control}, then for any $v(\cdot)\in\mathcal{U}[t_0,T]$ and $\tau\in[t_0,T)$, there exists a sequence $\{\ep_k\}_{k\in\mathbb{N}}\subset(0,T-\tau)$ (which depends on $\hat{u}(\cdot),\,v(\cdot)$ and $\tau$) such that $\lim_{k\to\infty}\ep_k=0$ and
\begin{equation*}
	\liminf_{k\to\infty}\frac{J(\tau,\hat{X}(\tau);u^{\tau,\ep_k}(\cdot))-J(\tau,\hat{X}(\tau);\hat{u}|_{[\tau,T]}(\cdot))}{\ep_k}\geq0\ \text{a.s.};
\end{equation*}
see Remark~\ref{remark: last remark}. We also remark that our definition is consistent with the game theoretic formulation usually discussed in the literature of continuous-time time-inconsistent stochastic control problems; see for example \cite{a_Bjork-_17}.
\item
The concept of open-loop equilibrium controls is time-consistent. Indeed, if $\hat{u}(\cdot)\in\mathcal{U}[t_0,T]$ is an open-loop equilibrium control with respect to a given initial condition $(t_0,x_{t_0})\in\mathcal{I}$, then, for any future time $t_1\in[t_0,T)$, the restriction $\hat{u}|_{[t_1,T]}(\cdot)\in\mathcal{U}[t_1,T]$ of $\hat{u}(\cdot)$ on the later time interval $[t_1,T]$ is also an open-loop equilibrium control with respect to the corresponding initial condition $(t_1,X(t_1;t_0,x_{t_0};\hat{u}(\cdot)))\in\mathcal{I}$.
\end{enumerate}
\end{rem}

Our goal is to characterize an open-loop equilibrium control by using variational methods. The key point is to derive the first-order and the second-order adjoint equations. Firstly, let us state our main result. The proof will be given in Section~\ref{section: proof of main result}.
\par
\medskip
Let $(t_0,x_{t_0})\in\mathcal{I}$ and $\hat{u}(\cdot)\in\mathcal{U}[t_0,T]$ be given. Denote by $(\hat{X}(\cdot),\hat{Y}(\cdot),\hat{Z}(\cdot,\cdot))$ the corresponding triplet, that is,
\begin{equation}\label{equilibrium triplet}
	(\hat{X}(\cdot),\hat{Y}(\cdot),\hat{Z}(\cdot,\cdot)):=(X(\cdot;t_0,x_{t_0};\hat{u}(\cdot)),Y(\cdot;t_0,x_{t_0};\hat{u}(\cdot)),Z(\cdot,\cdot;t_0,x_{t_0};\hat{u}(\cdot))).
\end{equation}
We use the following notation:
\begin{equation}\label{notation 1}
	\varphi(s)=\varphi(s,\hat{u}(s),\hat{X}(s)),\ \varphi_x(s)=\partial_x\varphi(s,\hat{u}(s),\hat{X}(s)),\ \varphi_{xx}(s)=\partial^2_x\varphi(s,\hat{u}(s),\hat{X}(s)),
\end{equation}
for $\varphi=b,\sigma$, and
\begin{equation}\label{notation 2}
	\begin{cases}
		h(t)=h(t,\hat{X}(T)),\ h_x(t)=\partial_xh(t,\hat{X}(T)),\ h_{xx}(t)=\partial^2_xh(t,\hat{X}(T)),\\
		f(t,s)=f(t,s,\hat{u}(s),\hat{X}(s),\hat{Y}(s),\hat{Z}(t,s)),\\
		f_\alpha(t,s)=\partial_\alpha f(t,s,\hat{u}(s),\hat{X}(s),\hat{Y}(s),\hat{Z}(t,s)),\ \alpha=x,y,z,\\
		D^2f(t,s)=D^2f(t,s,\hat{u}(s),\hat{X}(s),\hat{Y}(s),\hat{Z}(t,s)).
	\end{cases}
\end{equation}
\par
We introduce the following two EBSVIEs for $(p(\cdot,\cdot),q(\cdot,\cdot))$ and $(P(\cdot,\cdot),Q(\cdot,\cdot))$, respectively:
\begin{equation}\label{EBSVIE 1}
\begin{split}
	&p(t,s)=h_x(t)+\int^T_s\Bigl\{b^\top_x(r)p(t,r)+\sigma^\top_x(r)q(t,r)+f_z(t,r)\bigl(\sigma^\top_x(r)p(t,r)+q(t,r)\bigr)\\
	&\hspace{4cm}+f_y(t,r)p(r,r)+f_x(t,r)\Bigr\}\,dr\\
	&\hspace{2cm}-\int^T_sq(t,r)\,dW(r),\ \ s\in[t_0,T],\ t\in[t_0,T],
\end{split}
\end{equation}
and
\begin{equation}\label{EBSVIE 2}
\begin{split}
	&P(t,s)\\
	&=h_{xx}(t)\\
	&\hspace{0.5cm}+\int^T_s\Bigl\{b^\top_x(r)P(t,r)+P(t,r)b_x(r)+\sigma^\top_x(r)P(t,r)\sigma_x(r)+\sigma^\top_x(r)Q(t,r)+Q(t,r)\sigma_x(r)\\
	&\hspace{0.8cm}+f_z(t,r)\bigl(\sigma^\top_x(r)P(t,r)+P(t,r)\sigma_x(r)+Q(t,r)\bigr)+f_y(t,r)P(r,r)\\
	&\hspace{0.8cm}+b^\top_{xx}(r)p(t,r)+\sigma^\top_{xx}(r)\bigl(f_z(t,r)p(t,r)\mathalpha{+}q(t,r)\bigr)\\
	&\hspace{0.8cm}+[I_{n\times n},p(r,r),\sigma^\top_x(r)p(t,r)\mathalpha{+}q(t,r)]D^2f(t,r)[I_{n\times n},p(r,r),\sigma^\top_x(r)p(t,r)\mathalpha{+}q(t,r)]^\top\Bigr\}\,dr\\
	&\hspace{0.5cm}-\int^T_sQ(t,r)\,dW(r),\ \ s\in[t_0,T],\ t\in[t_0,T],
\end{split}
\end{equation}
where $b^\top_{xx}(r)p(t,r):=\sum^n_{i=1}p^i(t,r)b^i_{xx}(r)$ and $\sigma^\top_{xx}(r)(f_z(t,r)p(t,r)+q(t,r))$ is defined similarly. We call EBSVIE~\eqref{EBSVIE 1} the \emph{first-order adjoint equation} and EBSVIE~\eqref{EBSVIE 2} the \emph{second-order adjoint equation} in the spirit of the stochastic maximum principle. The above adjoint equations are natural generalizations of that of time-consistent problems~\cite{a_Peng_90,b_Yong-Zhou_99,a_Hu_17} and a time-inconsistent problem with an additive cost functional~\cite{a_Yan-Yong_19} (Section 4). These equations become EBSVIEs due to the dependency on $f_y(t,r)p(r,r)$ and $f_y(t,r)P(r,r)$ of the generators. Note that the coefficients of EBSVIEs~\eqref{EBSVIE 1} and \eqref{EBSVIE 2} are not continuous with respect $t$ in the pointwise sense, and hence they are beyond the literature \cite{a_Wang_20}. Alternatively, we can easily check that they satisfy the weak continuity assumption~\eqref{weak continuity} and any other conditions in Assumption~\ref{assumption: EBSVIE} for any $p\geq2$; see Remark~\ref{remark: t-continuity}. Moreover, by Assumption~\ref{assumption: cost BSVIE}~(\rnum{4}), we see that the coefficients of the first-order adjoint equation~\eqref{EBSVIE 1} satisfy Assumption~\ref{assumption: EBSVIE derivative}. Thus, by Theorems~\ref{theorem: EBSVIE} and \ref{theorem: EBSVIE derivative}, together with Lemma~\ref{lemma: Z diagonal}, we obtain the following proposition.


\begin{prop}\label{proposition: EBSVIE}
EBSVIEs \eqref{EBSVIE 1} and \eqref{EBSVIE 2} have unique adapted C-solutions
\begin{equation*}
\begin{split}
	&(p(\cdot,\cdot),q(\cdot,\cdot))\in\bigcap_{p\geq2}\mathfrak{H}^p_\mathbb{F}(t_0,T;\mathbb{R}^n\times\mathbb{R}^n)\ \text{and}\\
	&(P(\cdot,\cdot),Q(\cdot,\cdot))\in\bigcap_{p\geq2}\mathfrak{H}^p_\mathbb{F}(t_0,T;\mathbb{R}^{n\times n}\times\mathbb{R}^{n\times n}),
\end{split}
\end{equation*}
respectively. Moreover, for any $p\geq2$, the map $t\mapsto q(t,\cdot)$ is in $C^1([t_0,T];L^{p,2}_\mathbb{F}(t_0,T;\mathbb{R}^n))$, and there exists a unique process $\mathrm{Diag}[q](\cdot)\in \bigcap_{p\geq2}L^{p,2}_\mathbb{F}(t_0,T;\mathbb{R}^n)$ such that, for any $p'\geq1$ and $1\leq q'\leq2$, it holds that
\begin{equation}\label{q-diagonal estimate}
	\mathbb{E}\Bigl[\Bigl(\int^{t+\ep}_t\bigl|q(t,s)-\mathrm{Diag}[q](s)\bigr|^{q'}\,ds\Bigr)^{p'/q'}\Bigr]^{1/p'}=o(\ep^{1/2+1/q'}),\ \forall\,t\in[t_0,T).
\end{equation}
\end{prop}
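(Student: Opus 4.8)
The plan is to obtain the whole statement from the abstract results of Section~\ref{section: EBSVIE} (Theorems~\ref{theorem: EBSVIE} and~\ref{theorem: EBSVIE derivative}) and Lemma~\ref{lemma: Z diagonal}, by checking that the coefficients of the two adjoint equations fall under those hypotheses. The two equations are treated in cascade: \eqref{EBSVIE 1} first, since its coefficients involve only $\hat{X},\hat{Y},\hat{Z}$; once its solution $(p,q)$ is in hand and shown to lie in every $\mathfrak{H}^p_\mathbb{F}$, equation \eqref{EBSVIE 2} is a \emph{linear} EBSVIE in $(P,Q)$ whose coefficients are built from $(p,q)$ and the second derivatives of $b,\sigma,h,f$.

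For \eqref{EBSVIE 1}: with the notation \eqref{notation 2} its free term is $h_x(t)$ and its generator is $g(t,s,\eta,y,z)=b_x^\top(s)y+\sigma_x^\top(s)z+f_z(t,s)(\sigma_x^\top(s)y+z)+f_y(t,s)\eta+f_x(t,s)$. By the Lipschitz parts of Assumptions~\ref{assumption: state SDE}(i) and~\ref{assumption: cost BSVIE}(i) the processes $b_x(\cdot),\sigma_x(\cdot),f_x(\cdot,\cdot),f_y(\cdot,\cdot),f_z(\cdot,\cdot)$ are uniformly bounded, so $g$ is Lipschitz in $(\eta,y,z)$ with a deterministic constant, $g(\cdot,\cdot,0,0,0)=f_x(\cdot,\cdot)$ is bounded, and $h_x(t)=\partial_xh(t,\hat{X}(T))\in C([t_0,T];L^p_{\mathcal{F}_T}(\Omega;\mathbb{R}^n))$ by Assumption~\ref{assumption: cost BSVIE}(iii) and $\hat{X}(T)\in\bigcap_{p\geq1}L^p_{\mathcal{F}_T}$. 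The one non-routine point is the weak $t$-continuity \eqref{weak continuity}: the $t$-dependence of $g$ enters only through $f_x,f_y,f_z$, which depend on $t$ both explicitly and through $\hat{Z}(t,s)$. Combining Assumption~\ref{assumption: cost BSVIE}(iii), the Lipschitz bound on $Df$, the continuity of $t\mapsto\hat{Z}(t,\cdot)$ in $L^{p,2}_\mathbb{F}$ (Corollary~\ref{corollary: BSVIE}), and Lemma~\ref{lemma: regularity}, one checks that $t\mapsto f_x(t,\cdot)$ is continuous into $L^{p,1}_\mathbb{F}$ and that $t\mapsto(f_y(t,\cdot),f_z(t,\cdot))$ is bounded and continuous into $L^0_\mathbb{F}$; hence \eqref{weak continuity} holds with $k:=f_x$ and $l:=C(f_y,f_z)$, and Assumption~\ref{assumption: EBSVIE} is verified for every $p\geq2$. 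Theorem~\ref{theorem: EBSVIE}, together with uniqueness of the $L^p$-adapted C-solution across $p$, then gives $(p(\cdot,\cdot),q(\cdot,\cdot))\in\bigcap_{p\geq2}\mathfrak{H}^p_\mathbb{F}(t_0,T;\mathbb{R}^n\times\mathbb{R}^n)$.

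For the differentiability of $t\mapsto q(t,\cdot)$ I would use the already-established fact (from the paragraph preceding the proposition, i.e.\ Theorem~\ref{theorem: EBSVIE derivative} applied to \eqref{cost BSVIE}, whose generator depends on $t$ only explicitly) that $t\mapsto\hat{Z}(t,\cdot)\in C^1([t_0,T];L^{p,2}_\mathbb{F})$ for every $p\geq2$, with $\partial_t\hat{Z}(t,\cdot)$ having moments of all orders. Feeding this into \eqref{EBSVIE 1}, the chain rule gives $\partial_tf_\alpha(t,s)=(\partial_t\partial_\alpha f)(t,s,\hat{u}(s),\hat{X}(s),\hat{Y}(s),\hat{Z}(t,s))+(\partial_z\partial_\alpha f)(t,s,\hat{u}(s),\hat{X}(s),\hat{Y}(s),\hat{Z}(t,s))\,\partial_t\hat{Z}(t,s)$, hence $\partial_tg(t,s,\eta,y,z)=\partial_tf_z(t,s)(\sigma_x^\top(s)y+z)+\partial_tf_y(t,s)\eta+\partial_tf_x(t,s)$, and differentiability of $g$ in $(y,z)$ is immediate since $g$ is affine there. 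The hard part is here: because of the term $\partial_z\partial_\alpha f\cdot\partial_t\hat{Z}$, the Lipschitz coefficients $\partial_tf_y,\partial_tf_z$ of $\partial_tg$ are only in $\bigcap_{p}L^{p,2}_\mathbb{F}$ rather than bounded, so Theorem~\ref{theorem: EBSVIE derivative} does not apply verbatim; I would rerun its proof, observing that the BSDEs for the difference quotients $\Delta^h\mathcal{Y},\Delta^h\mathcal{Z}$ have \emph{bounded} Lipschitz coefficients (the averaged $\partial_{y}g,\partial_{z}g$), while the unbounded data appear only in the inhomogeneous terms, which still converge to zero in $L^{p,1}_\mathbb{F}$ once one invokes that $(p,q)$ and $\partial_t\hat{Z}$ have moments of every order (so products such as $\partial_tf_z\cdot q$ stay in $L^{p,1}_\mathbb{F}$ with the required $t$-continuity, by Lemma~\ref{lemma: regularity}). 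This yields $t\mapsto q(t,\cdot)\in C^1([t_0,T];L^{p,2}_\mathbb{F})$ for every $p\geq2$, and Lemma~\ref{lemma: Z diagonal} applied to $q$ with the second exponent equal to $2$ produces the unique $\mathrm{Diag}[q](\cdot)\in\bigcap_{p\geq2}L^{p,2}_\mathbb{F}(t_0,T;\mathbb{R}^n)$ satisfying Property~(D), with rate $o(\ep^{1+1/q'-1/2})=o(\ep^{1/2+1/q'})$, which is \eqref{q-diagonal estimate}.

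Finally, for \eqref{EBSVIE 2} I would treat the now-known $(p,q)\in\bigcap_{p\geq2}\mathfrak{H}^p_\mathbb{F}$ as frozen data and read \eqref{EBSVIE 2} as an EBSVIE for $(P,Q)$: linear in $(P(t,r),Q(t,r))$ with bounded coefficients $b_x,\sigma_x,f_z$, linear in the diagonal $P(r,r)$ with bounded coefficient $f_y$, plus an inhomogeneous term $G_0(t,r)$ assembled from $b_{xx},\sigma_{xx},h_{xx},D^2f$ and $p(t,r),p(r,r),q(t,r)$. The only subtlety is the integrability of $G_0$: the $D^2f$-quadratic form contributes terms quadratic in $q(t,r)$ and in $p(r,r)$, but since $q(t,\cdot)\in L^{2p,2}_\mathbb{F}$ and $p(r,r)\in L^{2p}_{\mathcal{F}_T}$ for every $p$, Hölder's inequality gives $G_0(\cdot,\cdot)\in C([t_0,T];L^{p,1}_\mathbb{F})$ for every target $p\geq2$ (the $t$-continuity of the products being handled by Lemma~\ref{lemma: regularity} and the continuity of $t\mapsto(p(t,\cdot),q(t,\cdot))$), and \eqref{weak continuity} again holds with $k:=G_0$, $l:=C(f_y,f_z)$. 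Hence Assumption~\ref{assumption: EBSVIE} holds for \eqref{EBSVIE 2} for every $p\geq2$, and Theorem~\ref{theorem: EBSVIE} together with uniqueness across $p$ gives $(P(\cdot,\cdot),Q(\cdot,\cdot))\in\bigcap_{p\geq2}\mathfrak{H}^p_\mathbb{F}(t_0,T;\mathbb{R}^{n\times n}\times\mathbb{R}^{n\times n})$, completing the proof.
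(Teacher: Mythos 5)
Your proposal is correct and follows the same route as the paper: the paper gives no separate proof of Proposition~\ref{proposition: EBSVIE}, but derives it in the paragraph immediately preceding it precisely by checking that the coefficients of \eqref{EBSVIE 1} and \eqref{EBSVIE 2} satisfy Assumption~\ref{assumption: EBSVIE} (with the weak $t$-continuity \eqref{weak continuity} in place of pointwise continuity) and that \eqref{EBSVIE 1} satisfies Assumption~\ref{assumption: EBSVIE derivative}, and then citing Theorems~\ref{theorem: EBSVIE}, \ref{theorem: EBSVIE derivative} and Lemma~\ref{lemma: Z diagonal}; your cascade treatment of \eqref{EBSVIE 2} with $(p,q)$ frozen, and the H\"older argument placing the quadratic inhomogeneous term in $C([t_0,T];L^{p,1}_\mathbb{F})$, is exactly the intended verification. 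The one place where you part company with the paper is a genuine refinement rather than a detour: you observe that $\partial_t g$ for \eqref{EBSVIE 1} contains $\partial_z\partial_\alpha f\cdot\partial_t\hat{Z}(t,s)$ (and $\partial_t Df$ evaluated along the state, which has only linear growth), so its Lipschitz coefficient in $(\eta,y,z)$ is a stochastic process in $\bigcap_{p}L^{p,2}_\mathbb{F}$ rather than a deterministic constant, and Assumption~\ref{assumption: EBSVIE derivative} is therefore not met verbatim. The paper simply asserts that the assumption holds; your plan of rerunning the proof of Theorem~\ref{theorem: EBSVIE derivative} --- noting that the difference-quotient BSDEs retain bounded Lipschitz coefficients while the unbounded factors enter only the inhomogeneous terms, which still vanish in $L^{p,1}_\mathbb{F}$ by H\"older since all quantities involved have moments of every order --- is the right way to close that step, and it is the only substantive addition your argument makes beyond the paper's.
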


Define the $\mathcal{H}$-function $[t_0,T]\times\Omega\times U\ni(s,\omega,v)\mapsto\mathcal{H}(s,\omega,v;t_0,x_{t_0},\hat{u}(\cdot))\in\mathbb{R}$ by
\begin{equation}\label{H-function}
\begin{split}
	&\mathcal{H}(s,\omega,v;t_0,x_{t_0};\hat{u}(\cdot))\\
	&:=\bigl\langle p(s,s),b\bigl(s,v,\hat{X}(s)\bigr)\bigr\rangle+\bigl\langle\mathrm{Diag}[q](s),\sigma\bigl(s,v,\hat{X}(s)\bigr)\bigr\rangle\\
	&\hspace{1cm}+f\Bigl(s,s,v,\hat{X}(s),\hat{Y}(s),\mathrm{Diag}[\hat{Z}](s)+\bigl\langle p(s,s),\sigma\bigl(s,v,\hat{X}(s)\bigr)-\sigma\bigl(s,\hat{u}(s),\hat{X}(s)\bigr)\bigr\rangle\Bigr)\\
	&\hspace{1cm}+\frac{1}{2}\Bigl\langle P(s,s)\bigl(\sigma\bigl(s,v,\hat{X}(s)\bigr)-\sigma\bigl(s,\hat{u}(s),\hat{X}(s)\bigr)\bigr),\sigma\bigl(s,v,\hat{X}(s)\bigr)-\sigma\bigl(s,\hat{u}(s),\hat{X}(s)\bigr)\Bigr\rangle
\end{split}
\end{equation}
for $(s,\omega,v)\in[t_0,T]\times\Omega\times U$ (where we suppressed the dependency on $\omega\in\Omega$ in the right-hand side). Note that the state process $\hat{X}(\cdot)$, the cost process $(\hat{Y}(\cdot),\hat{Z}(\cdot,\cdot))$, and the adjoint processes $(p(\cdot,\cdot),q(\cdot,\cdot))$, $(P(\cdot,\cdot),Q(\cdot,\cdot))$ are uniquely determined by each initial condition $(t_0,x_{t_0})\in\mathcal{I}$ and control process $\hat{u}(\cdot)\in\mathcal{U}[t_0,T]$. Thus, the $\mathcal{H}$-function is also uniquely determined by them. Note also that the $\mathcal{H}$-function is progressively measurable and continuous in $v\in U$ for $\mathrm{Leb}_{[t_0,T]}\otimes\mathbb{P}$-a.e.\,$(s,\omega)\in[t_0,T]\times\Omega$. Now let us state our main result.


\begin{theo}\label{theorem: main result}
Let $(t_0,x_{t_0})\in\mathcal{I}$ be given. Then $\hat{u}(\cdot)\in\mathcal{U}[t_0,T]$ is an open-loop equilibrium control with respect to the initial condition $(t_0,x_{t_0})$ if and only if
\begin{equation}\label{characterization}
\begin{split}
	&\mathcal{H}(s,\omega,v;t_0,x_{t_0};\hat{u}(\cdot))\geq\mathcal{H}(s,\omega,\hat{u}(s,\omega);t_0,x_{t_0};\hat{u}(\cdot)),\ \ \forall\,v\in U,\\
	&\hspace{3cm}\text{for}\ \mathrm{Leb}_{[t_0,T]}\otimes\mathbb{P}\text{-a.e.}\,(s,\omega)\in[t_0,T]\times\Omega.
\end{split}
\end{equation}
\end{theo}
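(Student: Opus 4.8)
The plan is to combine a second-order spike-variational analysis of the state equation~\eqref{state SDE} and the cost equation~\eqref{cost BSVIE} with a duality computation built on the two adjoint EBSVIEs~\eqref{EBSVIE 1}--\eqref{EBSVIE 2}, and then to pass from the resulting first-order variational inequality to the pointwise minimum principle~\eqref{characterization} via a vector-valued Lebesgue differentiation argument. Throughout, fix $v(\cdot)\in\mathcal{U}[t_0,T]$, $\tau\in[t_0,T)$, a nonnegative bounded $\mathcal{F}_\tau$-measurable $\xi_\tau$, and small $\ep>0$, and let $u^\pert(\cdot)$ be the perturbed control~\eqref{perturbed control}. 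Since the coefficients of SDE~\eqref{state SDE} and BSVIE~\eqref{cost BSVIE} depend only on the current time, state and control, uniqueness (the flow property) gives $J(\tau,\hat X(\tau);u^\pert(\cdot))-J(\tau,\hat X(\tau);\hat u|_{[\tau,T]}(\cdot))=Y^\pert(\tau)-\hat Y(\tau)$, where $(X^\pert(\cdot),Y^\pert(\cdot),Z^\pert(\cdot,\cdot))$ is the triplet associated with $(\tau,\hat X(\tau),u^\pert(\cdot))$. I would then introduce the usual first- and second-order variational processes $X_1(\cdot),X_2(\cdot)$ for the state (linear SDEs driven by the spike $\1_{[\tau,\tau+\ep]}$ in the increments of $b,\sigma$, with the frozen coefficients $b_x(s),\sigma_x(s),b_{xx}(s),\sigma_{xx}(s)$), together with the corresponding first- and second-order variational Type-\Rnum{1} BSVIEs $(Y_1(\cdot),Z_1(\cdot,\cdot))$ and $(Y_2(\cdot),Z_2(\cdot,\cdot))$ for the cost, whose coefficients are $h_x(t),h_{xx}(t),f_x(t,s),f_y(t,s),f_z(t,s),D^2f(t,s)$ frozen along $(\hat u,\hat X,\hat Y,\hat Z)$ and which are forced by $X_1,X_2$ together with the control-spike term $\1_{[\tau,\tau+\ep]}(s)\big(f(t,s,v(s),\hat X(s),\hat Y(s),\hat Z(t,s))-f(t,s,\hat u(s),\hat X(s),\hat Y(s),\hat Z(t,s))\big)$, in the spirit of Hu~\cite{a_Hu_17}. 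The routine $L^p$-estimates ($\mathbb{E}[\sup|X_1|^{2k}]=O(\ep^{k})$, $\mathbb{E}[\sup|X_2|^{2k}]=O(\ep^{2k})$, their analogues for $(Y_i,Z_i)$ via Corollary~\ref{corollary: BSVIE}, and the remainder bounds giving $X^\pert-\hat X=X_1+X_2+o(\ep)$ and $Y^\pert-\hat Y=Y_1+Y_2+o(\ep)$ in the relevant norms) are lengthy but standard and would be collected in the Appendix; in particular they yield $\tfrac1\ep\,\mathbb{E}[(Y^\pert(\tau)-\hat Y(\tau)-Y_1(\tau)-Y_2(\tau))\xi_\tau]\to0$.

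Next, setting $t=\tau$, I would apply It\^o's formula on $[\tau,T]$ to $\langle p(\tau,s),X_1(s)\rangle$, $\langle p(\tau,s),X_2(s)\rangle$ and $\langle P(\tau,s)X_1(s),X_1(s)\rangle$, and combine these with the variational BSVIEs for $(Y_1,Z_1)$ and $(Y_2,Z_2)$. The EBSVIE structure of~\eqref{EBSVIE 1}--\eqref{EBSVIE 2}---in particular the ``diagonal'' terms $f_y(t,r)p(r,r)$ and $f_y(t,r)P(r,r)$ in their generators, which are exactly what is needed to absorb the $f_y(t,s)Y_1(s)$ and $f_y(t,s)Y_2(s)$ contributions from the variational BSVIEs---is designed so that this computation closes up: after cancellation of all ``bulk'' terms on $[\tau+\ep,T]$ (using the continuity of $t\mapsto(p(t,\cdot),q(t,\cdot),P(t,\cdot),Q(t,\cdot))$ from Theorem~\ref{theorem: EBSVIE}) one is left with an integral over the spike interval $[\tau,\tau+\ep]$. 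On that interval I would replace the ill-defined $Z$-type diagonals $q(s,s)$ and $\hat Z(s,s)$ by $\mathrm{Diag}[q](s)$ and $\mathrm{Diag}[\hat Z](s)$, using Property~(D) and the quantitative rates of Lemma~\ref{lemma: Z diagonal} and Proposition~\ref{proposition: EBSVIE}; this is precisely the point where the regularity theory of Section~\ref{section: EBSVIE} and the diagonal-process calculus of Subsection~\ref{subsection: two time-parameters} become indispensable (Example~\ref{example: diagonal} shows that a naive ``$Z(s,s)$'' would be meaningless). The remaining integrand---the spikes of $b,\sigma,f$, the quadratic $\sigma$-correction weighted by $P(s,s)$, and the $f_z$-induced shift $\langle p(s,s),\sigma(s,v(s),\hat X(s))-\sigma(s,\hat u(s),\hat X(s))\rangle$ inside the last argument of $f$---is then recognized as exactly $\mathcal{H}(s,v(s))-\mathcal{H}(s,\hat u(s))$. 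Combining with the estimates above, this produces, for the fixed data $v(\cdot),\tau,\xi_\tau$,
\begin{equation*}
	\frac1\ep\,\mathbb{E}\big[(Y^\pert(\tau)-\hat Y(\tau))\,\xi_\tau\big]=\frac1\ep\,\mathbb{E}\Big[\int_\tau^{\tau+\ep}\big(\mathcal{H}(s,v(s))-\mathcal{H}(s,\hat u(s))\big)\,ds\,\xi_\tau\Big]+o(1),\qquad\ep\downarrow0,
\end{equation*}
where $\mathcal{H}(s,v)$ is short for $\mathcal{H}(s,\omega,v;t_0,x_{t_0};\hat u(\cdot))$.

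From this identity both implications follow. For \emph{sufficiency}, if~\eqref{characterization} holds then, by Fubini, for $\mathrm{Leb}_{[t_0,T]}\otimes\mathbb{P}$-a.e.\,$(s,\omega)$ the inequality holds for \emph{every} $v\in U$, hence for $v=v(s,\omega)$, so the integrand above is nonnegative; since $\xi_\tau\geq0$, the right-hand side is $\geq o(1)$, and therefore $\liminf_{\ep\downarrow0}\tfrac1\ep\,\mathbb{E}[(J(\tau,\hat X(\tau);u^\pert(\cdot))-J(\tau,\hat X(\tau);\hat u|_{[\tau,T]}(\cdot)))\xi_\tau]\geq0$, i.e.\,$\hat u(\cdot)$ is an open-loop equilibrium control. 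For \emph{necessity}, suppose $\hat u(\cdot)$ is an open-loop equilibrium control; taking $v(\cdot)\equiv v\in U$ and $\xi_\tau=\1_A$ with $A\in\mathcal{F}_\tau$ in the identity and in the defining inequality yields
\begin{equation*}
	\liminf_{\ep\downarrow0}\frac1\ep\int_\tau^{\tau+\ep}\mathbb{E}\big[\Lambda^v(s)\,\1_A\big]\,ds\geq0,\qquad\Lambda^v(s):=\mathcal{H}(s,v)-\mathcal{H}(s,\hat u(s)),
\end{equation*}
for every $\tau\in[t_0,T)$ and $A\in\mathcal{F}_\tau$. Since all ingredients of $\mathcal{H}(\tau,\cdot,v)$ and $\mathcal{H}(\tau,\cdot,\hat u(\tau,\cdot))$---namely $\hat X(\tau),\hat Y(\tau),p(\tau,\tau),P(\tau,\tau),\mathrm{Diag}[q](\tau),\mathrm{Diag}[\hat Z](\tau)$---are $\mathcal{F}_\tau$-measurable (for a.e.\,$\tau$), the process $\Lambda^v(\cdot)\in L^1_\mathbb{F}(t_0,T;\mathbb{R})$ may be regarded as an $L^1(\Omega,\mathcal{F}_T,\mathbb{P})$-valued integrable function of $s$; as that Banach space is separable, a.e.\,$\tau$ is a Lebesgue point, $\tfrac1\ep\int_\tau^{\tau+\ep}\mathbb{E}[|\Lambda^v(s)-\Lambda^v(\tau)|]\,ds\to0$, and for such $\tau$ the displayed inequality forces $\mathbb{E}[\Lambda^v(\tau)\1_A]\geq0$ for \emph{all} $A\in\mathcal{F}_\tau$, hence (taking $A=\{\Lambda^v(\tau)<0\}$) $\Lambda^v(\tau)\geq0$ a.s. Thus $\Lambda^v\geq0$ $\mathrm{Leb}_{[t_0,T]}\otimes\mathbb{P}$-a.e.; taking a countable dense set $\{v_n\}\subset U$, intersecting the corresponding full-measure sets, and using that $v\mapsto\mathcal{H}(s,\omega,v)$ is continuous for a.e.\,$(s,\omega)$ gives~\eqref{characterization}.

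The main obstacle is the duality step: carrying out the full second-order variational expansion of the cost \emph{BSVIE} (rather than a BSDE as in~\cite{a_Hu_17}) and the accompanying It\^o/duality computation, while keeping rigorous control of the two-time-parameter structure and, above all, correctly identifying and estimating the diagonal contributions $\mathrm{Diag}[q]$ and $\mathrm{Diag}[\hat Z]$ entering $\mathcal{H}$---this is exactly where Example~\ref{example: diagonal} would otherwise obstruct the argument and where Theorem~\ref{theorem: EBSVIE derivative} together with Lemma~\ref{lemma: Z diagonal} and Proposition~\ref{proposition: EBSVIE} enter essentially. The numerous $L^p$-estimates on the first- and second-order variational processes that underlie the whole scheme are routine but technical, and would be relegated to the Appendix.
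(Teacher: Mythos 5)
Your overall architecture---spike variation of the state, duality against the adjoint EBSVIEs \eqref{EBSVIE 1}--\eqref{EBSVIE 2}, replacement of the ill-defined diagonals by $\mathrm{Diag}[q]$ and $\mathrm{Diag}[\hat Z]$ with the quantitative rates of Lemma~\ref{lemma: Z diagonal} and Proposition~\ref{proposition: EBSVIE}, and the Lebesgue-differentiation step for necessity---is the paper's, and your sufficiency/necessity arguments from the key identity are essentially the paper's (Lemma~\ref{lemma: A5}). The gap is in how you propose to reach that identity. You introduce first- and second-order variational Type-\Rnum{1} BSVIEs for the cost whose generators are the linearizations $f_x,f_y,f_z,D^2f$ frozen along $(\hat u,\hat X,\hat Y,\hat Z)$ and whose spike forcing is $\1_{[\tau,\tau+\ep)}(s)\bigl(f(t,s,v(s),\hat X(s),\hat Y(s),\hat Z(t,s))-f(t,s,\hat u(s),\hat X(s),\hat Y(s),\hat Z(t,s))\bigr)$. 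For a $z$-dependent generator this naive Peng-type expansion does not give $Y^\pert-\hat Y=Y_1+Y_2+o(\ep)$: on the spike interval $Z^\pert(t,s)-\hat Z(t,s)$ contains the term $\beta(t,s)=\langle p(t,s),\delta\sigma(s)\rangle$, which is $O(1)$ as a $ds\otimes d\mathbb{P}$ density, so the remainder of Taylor-expanding $f$ in $z$ around $\hat Z(t,s)$ is $O(1)$ pointwise there and contributes $O(\ep)$ after integration over $[\tau,\tau+\ep)$---the same order as the quantity you are computing. Concretely, your scheme would produce $f_z(s,s)\langle p(s,s),\delta\sigma(s)\rangle$ plus finitely many higher-order correction terms in place of the full nonlinear $f\bigl(s,s,v,\hat X(s),\hat Y(s),\mathrm{Diag}[\hat Z](s)+\langle p(s,s),\delta\sigma(s)\rangle\bigr)$ appearing in \eqref{H-function}; the identity \eqref{cost representation} you then assert does not follow from the equations you set up unless $f$ is affine in $z$.

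This is precisely the obstruction that Hu's method \cite{a_Hu_17} is designed to bypass, and it is the route the paper takes: rather than expanding the backward equation, one subtracts $\eta^\pert(t)=\langle p(t,t),X^\pert_1(t)+X^\pert_2(t)\rangle+\tfrac12\langle P(t,t)X^\pert_1(t),X^\pert_1(t)\rangle$ from $Y^\pert$ and $\beta(t,s)\1_{[\tau,\tau+\ep)}(s)+\zeta^\pert(t,s)$ from $Z^\pert$ \emph{before} any Taylor expansion, so that the reduced pair $(\bar Y^\pert,\bar Z^\pert)$ satisfies a BSVIE whose generator retains the unexpanded difference $f\bigl(t,s,v(s),\hat X(s),\hat Y(s),\hat Z(t,s)+\beta(t,s)\bigr)-f(t,s)$ on the spike interval, while the adjoint generators $k$ and $K$ in \eqref{k}--\eqref{K} are read off as the unique choices annihilating the coefficients of $X^\pert_1+X^\pert_2$ and of the quadratic term in $X^\pert_1$. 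If you replace your variational-BSVIE step by this reduction (keeping the rest of your plan, which is sound), the argument closes as in the paper.
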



\begin{rem}
\begin{enumerate}
\renewcommand{\labelenumi}{(\roman{enumi})}
\item
Our result is an extension of that of Yan--Yong~\cite{a_Yan-Yong_19} (Section 4), where the authors investigated an open-loop equilibrium control in a time-inconsistent stochastic control problem for a cost functional defined by just a conditional expectation of a function of states and controls, that is, an additive cost functional. Our result generalizes their result to the case of a recursive cost functional. On one hand, due to the difficulty to treat the ``diagonal processes'' of $q(\cdot,\cdot)$ and $\hat{Z}(\cdot,\cdot)$, the characterization result stated in Theorem~1 of \cite{a_Yan-Yong_19} remained to include a limit procedure, and hence they did not provide a full characterization in a local form like \eqref{characterization}. On the other hand, we overcame the difficulty by introducing the operator $\mathrm{Diag}[\cdot]$ which we defined in Section~\ref{subsection: two time-parameters}. We emphasize that Assumption~\ref{assumption: cost BSVIE}~(\rnum{4}) guarantees the well-definedness of $\mathrm{Diag}[q](\cdot)$ and $\mathrm{Diag}[\hat{Z}](\cdot)$ via Theorem~\ref{theorem: EBSVIE derivative}.
\item
Let us remark on the setting of the problem. The assumptions of uniform boundedness of $b(s,u,0),\,\sigma(s,u,0),\,f(t,s,u,0,0,0)$, with respect to $u$, and/or the Lipschitz continuity of $h,\,f$, with respect to $x$, exclude the case of linear-quadratic control problems. Besides, the requirement of the initial state $x_{t_0}$ being in $L^p_{\mathcal{F}_{t_0}}(\Omega;\mathbb{R}^n)$ for any $p\geq1$ may seem to be too strong. However, since the main goal of this paper is to derive proper forms of the adjoint equations which characterize open-loop equilibrium controls, we do not pursue the most generality here. We remark that Assumptions~\ref{assumption: state SDE} and \ref{assumption: cost BSVIE} are generalizations of the assumptions~(S0)--(S3) in the textbook~\cite{b_Yong-Zhou_99} to our problem. Also, by a careful observation of discussions in Section~\ref{section: proof of main result}, we see that it suffices to assume that the initial state $x_{t_0}$ is in $L^8_{\mathcal{F}_{t_0}}(\Omega;\mathbb{R}^n)$.
\end{enumerate}
\end{rem}


\section{Proof of Theorem~\ref{theorem: main result}: Variational methods}\label{section: proof of main result}

In this section, we derive the adjoint equations~\eqref{EBSVIE 1} and \eqref{EBSVIE 2}, and prove Theorem~\ref{theorem: main result}. Proofs of some technical estimates are given in Appendix~\ref{appendix}.
\par
Suppose that we are given an initial condition $(t_0,x_{t_0})\in\mathcal{I}$ and a control process $\hat{u}(\cdot)\in\mathcal{U}[t_0,T]$. As in Section~\ref{section: control problem}, we denote by $(\hat{X}(\cdot),\hat{Y}(\cdot),\hat{Z}(\cdot,\cdot))$ the corresponding triplet; see \eqref{equilibrium triplet}. Fix $v(\cdot)\in\mathcal{U}[t_0,T]$ and $\tau\in[t_0,T)$. For each $\ep\in(0,T-\tau)$, define the perturbed triplet by
\begin{equation*}
\begin{split}
	&(X^\pert(\cdot),Y^\pert(\cdot),Z^\pert(\cdot,\cdot))\\
	&:=(X(\cdot;\tau,\hat{X}(\tau);u^\pert(\cdot)),Y(\cdot;\tau,\hat{X}(\tau);u^\pert(\cdot)),Z(\cdot,\cdot;\tau,\hat{X}(\tau);u^\pert(\cdot))),
\end{split}
\end{equation*}
where $u^\pert(\cdot)\in\mathcal{U}[\tau,T]$ is defined by \eqref{perturbed control}. Then we have that
\begin{equation*}
	J(\tau,\hat{X}(\tau);\hat{u}|_{[\tau,T]}(\cdot))=\hat{Y}(\tau)\ \text{and}\ J(\tau,\hat{X}(\tau);u^\pert(\cdot))=Y^\pert(\tau).
\end{equation*}
In the following, in addition to the notations~\eqref{notation 1} and \eqref{notation 2}, we use the following notation. For $\varphi=b,\sigma$,
\begin{equation*}
	\delta\varphi(s)=\varphi(s,v(s),\hat{X}(s))-\varphi(s)\ \text{and}\ \delta\varphi_x(s)=\partial_x\varphi(s,v(s),\hat{X}(s))-\varphi_x(s).
\end{equation*}
For each $\ep\in(0,T-\tau)$, we consider the following SDEs on $[\tau,T]$:
\begin{align*}
	&\begin{cases}
		dX^\pert_1(s)=b_x(s)X^\pert_1(s)\,ds+\bigl(\sigma_x(s)X^\pert_1(s)+\delta\sigma(s)\1_{[\tau,\tau+\ep)}(s)\bigr)\,dW(s),\ s\in[\tau,T],\\
		X^\pert_1(\tau)=0,
	\end{cases}
	\shortintertext{and}
	&\begin{cases}
		dX^\pert_2(s)=\bigl(b_x(s)X^\pert_2(s)+\delta b(s)\1_{[\tau,\tau+\ep)}(s)+\frac{1}{2}b_{xx}(s)X^\pert_1(s)X^\pert_1(s)\bigr)\,ds\\
			\hspace{2cm}+\bigl(\sigma_x(s)X^\pert_2(s)+\delta\sigma_x(s)X^\pert_1(s)\1_{[\tau,\tau+\ep)}(s)+\frac{1}{2}\sigma_{xx}(s)X^\pert_1(s)X^\pert_1(s)\bigr)\,dW(s),\\
			\hspace{7cm}s\in[\tau,T],\\
		X^\pert_2(\tau)=0,
	\end{cases}
\end{align*}
where $b_{xx}(s)X^\pert_1(s)X^\pert_1(s)=\bigl(\text{tr}[b^1_{xx}(s)X^\pert_1(s)X^\pert_1(s)^\top],\mathalpha{\dots},\text{tr}[b^n_{xx}(s)X^\pert_1(s)X^\pert_1(s)^\top]\bigr)^\top$ and similar for $\sigma_{xx}(s)X^\pert_1(s)X^\pert_1(s)$. The above SDEs are the first-order and the second-order variational equations for the sate equation~\eqref{state SDE} obtained by Peng in \cite{a_Peng_90}. The following lemma is well-known; see for example~\cite{a_Peng_90,b_Yong-Zhou_99}.


\begin{lemm}\label{lemma: variation state}
For any $p\geq1$, it holds that
\begin{align*}
	&\mathbb{E}\Biggl[\sup_{s\in[\tau,T]}|X^\pert_1(s)|^{2p}\Biggr]=O(\ep^p),\displaybreak[1]\\
	&\mathbb{E}\Biggl[\sup_{s\in[\tau,T]}|X^\pert_2(s)|^{2p}\Biggr]=O(\ep^{2p}),\displaybreak[1]\\
	&\mathbb{E}\Biggl[\sup_{s\in[\tau,T]}|X^\pert(s)-\hat{X}(s)|^{2p}\Biggr]=O(\ep^p),\displaybreak[1]\\
	&\mathbb{E}\Biggl[\sup_{s\in[\tau,T]}|X^\pert(s)-\hat{X}(s)-X^\pert_1(s)|^{2p}\Biggr]=O(\ep^{2p}),\displaybreak[1]\\
	&\mathbb{E}\Biggl[\sup_{s\in[\tau,T]}|X^\pert(s)-\hat{X}(s)-X^\pert_1(s)-X^\pert_2(s)|^{2p}\Biggr]=o(\ep^{2p}).
\end{align*}
In particular, for any $\varphi\in C^2(\mathbb{R}^n)$ with $\partial^2_x\varphi$ being bounded, it holds that
\begin{align*}
	&\mathbb{E}\Bigl[\Bigl|\varphi(X^\pert(T))-\varphi(\hat{X}(T))\\
	&\hspace{1cm}-\langle\partial_x\varphi(\hat{X}(T)),X^\pert_1(T)+X^\pert_2(T)\rangle-\frac{1}{2}\langle\partial^2_x\varphi(\hat{X}(T))X^\pert_1(T),X^\pert_1(T)\rangle\Bigr|^2\Bigr]=o(\ep^2).
\end{align*}
\end{lemm}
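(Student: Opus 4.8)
The plan is to prove the five moment estimates in the order listed, each building on its predecessors, and then read off the ``In particular'' assertion. The only machinery I would use is the classical one behind Peng's maximum principle (\cite{a_Peng_90,b_Yong-Zhou_99}): the Burkholder--Davis--Gundy inequality, H\"older's inequality in the time variable, Gronwall's lemma, and the standard $L^{2p}$-estimate for a linear SDE $dX=(b_x(s)X+\phi(s))\,ds+(\sigma_x(s)X+\phi'(s))\,dW(s)$ with $X(\tau)=0$, namely $\mathbb{E}[\sup_s|X(s)|^{2p}]\le C\,\mathbb{E}[(\int_\tau^T|\phi(s)|\,ds)^{2p}+(\int_\tau^T|\phi'(s)|^2\,ds)^{p}]$. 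Three structural remarks underlie everything: $b_x,\sigma_x,b_{xx},\sigma_{xx}$ and $\delta\sigma_x$ are bounded by a constant (Assumption~\ref{assumption: state SDE}), $\delta b$ and $\delta\sigma$ are dominated by $C(1+|\hat{X}(\cdot)|)$ hence lie in every $L^p$, and any integrand bounded in $L^{p}$ uniformly in time contributes a factor $\ep$ when integrated in $ds$ over the spike interval $[\tau,\tau+\ep)$ and a factor $\ep$ inside the $ds$-bracket of the diffusion term.

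For $X^\pert_1$, $X^\pert-\hat{X}$ and $X^\pert_2$ I would simply feed the right data into the linear $L^{2p}$-estimate. For $X^\pert_1$ the inhomogeneous diffusion coefficient is $\delta\sigma(\cdot)\1_{[\tau,\tau+\ep)}(\cdot)$, and $\mathbb{E}[(\int_\tau^{\tau+\ep}|\delta\sigma(s)|^2\,ds)^p]\le\ep^{p-1}\int_\tau^{\tau+\ep}\mathbb{E}[|\delta\sigma(s)|^{2p}]\,ds=O(\ep^p)$ by H\"older, which is the first estimate. For $X^\pert-\hat{X}$, split $b(s,u^\pert(s),X^\pert(s))-b(s,\hat{u}(s),\hat{X}(s))$ into the Lipschitz-in-$X$ difference $b(s,u^\pert(s),X^\pert(s))-b(s,u^\pert(s),\hat{X}(s))$ (absorbed by Gronwall) plus the localized perturbation $\delta b(s)\1_{[\tau,\tau+\ep)}(s)$, similarly for $\sigma$; the third estimate follows. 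For $X^\pert_2$ the four inhomogeneous terms are $\delta b\,\1_{[\tau,\tau+\ep)}$, $\tfrac12 b_{xx}(X^\pert_1)^{\otimes2}$, $\delta\sigma_x X^\pert_1\1_{[\tau,\tau+\ep)}$ and $\tfrac12\sigma_{xx}(X^\pert_1)^{\otimes2}$; each is $O(\ep^{2p})$ by the first estimate (applied with exponent $2p$), giving the second estimate.

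The fourth and fifth estimates are where one Taylor-expands the coefficients in $x$ about $\hat{X}(\cdot)$. For $\xi^\pert:=X^\pert-\hat{X}-X^\pert_1$, I would expand $b,\sigma$ to first order --- the remainder is $O(|X^\pert-\hat{X}|^2)$ thanks to the Lipschitz continuity of $\partial_x b,\partial_x\sigma$ --- so that $\xi^\pert$ solves a linear SDE of the above form with zero initial datum whose inhomogeneous terms are the first-order remainders, the products (mean-value coefficient minus $b_x$) times $X^\pert_1$ of size $|X^\pert-\hat{X}|\,|X^\pert_1|$, and the $[\tau,\tau+\ep)$-localized terms; by the first and third estimates and Cauchy--Schwarz all of these are $O(\ep^{2p})$, so the fourth estimate follows. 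For $\zeta^\pert:=\xi^\pert-X^\pert_2$ I would expand $b,\sigma$ to second order, bounding the second-order remainder by $|r(s)|\le\rho(|X^\pert(s)-\hat{X}(s)|)\,|X^\pert(s)-\hat{X}(s)|^2$ via the modulus-of-continuity assumption on $\partial^2_x b,\partial^2_x\sigma$. Writing $X^\pert-\hat{X}=X^\pert_1+\xi^\pert$, the first- and second-order Taylor terms regroup exactly into a linear feedback $b_x(\cdot)\zeta^\pert$ (resp. $\sigma_x(\cdot)\zeta^\pert$) plus inhomogeneous contributions built from $X^\pert_1\otimes\xi^\pert$, $(\xi^\pert)^{\otimes2}$, the localized terms and $r(\cdot)$; the first three are $o(\ep^{2p})$ by the first, second and fourth estimates, while $\mathbb{E}[(\int_\tau^T|r(s)|\,ds)^{2p}]=o(\ep^{2p})$ by splitting over $\{\sup_s|X^\pert(s)-\hat{X}(s)|\le\delta\}$ and its complement, using $\rho(\delta)^{2p}\mathbb{E}[\sup_s|X^\pert(s)-\hat{X}(s)|^{4p}]=\rho(\delta)^{2p}O(\ep^{2p})$ on the first event, Chebyshev on the second, and letting $\delta\downarrow0$ after $\ep\downarrow0$. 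One more application of the linear estimate and Gronwall gives the fifth estimate. I expect this last point --- extracting the \emph{genuine} $o(\ep^{2p})$ for $\zeta^\pert$ rather than the $O(\ep^{2p})$ already enjoyed by $\xi^\pert$ and $X^\pert_2$ --- to be the main obstacle, since it forces one to marry the merely-continuous moduli of the second derivatives with the moment bounds through the truncation argument above, and to track the mixed products carefully so the Gronwall estimate closes.

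Finally, for the displayed Taylor consequence I would write $\varphi(X^\pert(T))-\varphi(\hat{X}(T))=\langle\partial_x\varphi(\hat{X}(T)),X^\pert(T)-\hat{X}(T)\rangle+\tfrac12\langle\partial^2_x\varphi(\hat{X}(T))(X^\pert(T)-\hat{X}(T)),X^\pert(T)-\hat{X}(T)\rangle+R$, where $|R|\le\omega(|X^\pert(T)-\hat{X}(T)|)\,|X^\pert(T)-\hat{X}(T)|^2$ for a (local) modulus $\omega$ of continuity of $\partial^2_x\varphi$, then substitute $X^\pert(T)-\hat{X}(T)=X^\pert_1(T)+X^\pert_2(T)+\zeta^\pert(T)$ and discard every summand that is $o(\ep)$ in $L^2$: the term linear in $\zeta^\pert(T)$ by the fifth estimate with $p=1$; the cross terms $X^\pert_1\otimes X^\pert_2$, $X^\pert_1\otimes\zeta^\pert$, and $(X^\pert_2)^{\otimes2}$, $(\zeta^\pert)^{\otimes2}$ by the first, second and fifth estimates together with Cauchy--Schwarz; and $R$ by the same truncation argument. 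What remains is precisely $\langle\partial_x\varphi(\hat{X}(T)),X^\pert_1(T)+X^\pert_2(T)\rangle+\tfrac12\langle\partial^2_x\varphi(\hat{X}(T))X^\pert_1(T),X^\pert_1(T)\rangle$ up to an $L^2$-error of order $o(\ep)$, which is the claim.
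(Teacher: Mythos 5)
The paper gives no proof of this lemma at all --- it simply declares it well-known and cites Peng~\cite{a_Peng_90} and Yong--Zhou~\cite{b_Yong-Zhou_99}, where precisely the argument you outline (the linear $L^{2p}$-estimate via Burkholder--Davis--Gundy and Gronwall, successive Taylor expansions of the coefficients about $\hat{X}(\cdot)$, and a truncation argument converting the modulus-of-continuity bounds on the second derivatives into the genuine $o(\ep^{2p})$) is carried out. Your reconstruction is correct and is essentially that standard proof; the only point deserving an extra word is the final Taylor step, where $\partial^2_x\varphi$ is merely bounded and continuous rather than uniformly continuous, so the truncation must also localize $\hat{X}(T)$ to a compact set before a modulus of continuity is available --- which your ``local modulus'' remark already accommodates.
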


Now we derive the first-order and the second-order adjoint equations \eqref{EBSVIE 1} and \eqref{EBSVIE 2}. To do so, let us consider two BSDEs parametrized by $t\in[t_0,T]$ of the following forms:
\begin{align*}
	&\begin{cases}
		dp(t,s)=-k(t,s)\,ds+q(t,s)\,dW(s),\ s\in[t_0,T],\\
		p(t,T)=h_x(t),
	\end{cases}
	\shortintertext{and}
	&\begin{cases}
		dP(t,s)=-K(t,s)\,ds+Q(t,s)\,dW(s),\ s\in[t_0,T],\\
		P(t,T)=h_{xx}(t),
	\end{cases}
\end{align*}
for some measurable maps
\begin{equation*}
	k(\cdot,\cdot)\in\bigcap_{p\geq1}C([t_0,T];L^{p,1}_\mathbb{F}(t_0,T;\mathbb{R}^n))\ \text{and}\ K(\cdot,\cdot)\in\bigcap_{p\geq1}C([t_0,T];L^{p,1}_\mathbb{F}(t_0,T;\mathbb{R}^{n\times n})).
\end{equation*}
We will determine the precise forms of $k(\cdot,\cdot)$ and $K(\cdot,\cdot)$ later (see \eqref{k} and \eqref{K}). It can be easily shown that there exists a unique solutions $(p(\cdot,\cdot),q(\cdot,\cdot))\in\bigcap_{p\geq1}\mathfrak{H}^p_\mathbb{F}(t_0,T;\mathbb{R}^n\times\mathbb{R}^n)$ and $(P(\cdot,\cdot),Q(\cdot,\cdot))\in\bigcap_{p\geq1}\mathfrak{H}^p_\mathbb{F}(t_0,T;\mathbb{R}^{n\times n}\times\mathbb{R}^{n\times n})$ of the above equations.
\par
For each $t\in[\tau,T]$, by applying It\^{o}'s formula to the processes $\langle p(t,\cdot),X^\pert_1(\cdot)+X^\pert_2(\cdot)\rangle$ and $\langle P(t,\cdot)X^\pert_1(\cdot),X^\pert_1(\cdot)\rangle$ on $[t,T]$, we have that
\begin{align*}
	&\langle h_x(t),X^\pert_1(T)+X^\pert_2(T)\rangle+\frac{1}{2}\langle h_{xx}(t)X^\pert_1(T),X^\pert_1(T)\rangle\\
	&=\langle p(t,T),X^\pert_1(T)+X^\pert_2(T)\rangle+\frac{1}{2}\langle P(t,T)X^\pert_1(T),X^\pert_1(T)\rangle\\
	&=\langle p(t,t),X^\pert_1(t)+X^\pert_2(t)\rangle+\frac{1}{2}\langle P(t,t)X^\pert_1(t),X^\pert_1(t)\rangle\\
	&\hspace{0.5cm}+\int^T_t\Bigl\{\bigl\langle p(t,s),b_x(s)\bigl(X^\pert_1(s)+X^\pert_2(s)\bigr)+\delta b(s)\1_{[\tau,\tau+\ep)}(s)+\frac{1}{2}b_{xx}(s)X^\pert_1(s)X^\pert_1(s)\bigr\rangle\\
	&\hspace{2cm}-\bigl\langle k(t,s),X^\pert_1(s)+X^\pert_2(s)\bigr\rangle\\
	&\hspace{2cm}+\bigl\langle q(t,s),\sigma_x(s)\bigl(X^\pert_1(s)+X^\pert_2(s)\bigr)+\delta\sigma(s)\1_{[\tau,\tau+\ep)}(s)\\
	&\hspace{4cm}+\delta\sigma_x(s)X^\pert_1(s)\1_{[\tau,\tau+\ep)}(s)+\frac{1}{2}\sigma_{xx}(s)X^\pert_1(s)X^\pert_1(s)\bigr\rangle\\
	&\hspace{2cm}+\frac{1}{2}\bigl\langle\bigl(P(t,s)b_x(s)+b^\top_x(s)P(t,s)-K(t,s)\bigr)X^\pert_1(s),X^\pert_1(s)\bigr\rangle\\
	&\hspace{2cm}+\frac{1}{2}\bigl\langle P(t,s)\bigl(\sigma_x(s)X^\pert_1(s)+\delta\sigma(s)\1_{[\tau,\tau+\ep)}(s)\bigr),\sigma_x(s)X^\pert_1(s)+\delta\sigma(s)\1_{[\tau,\tau+\ep)}(s)\bigr\rangle\\
	&\hspace{2cm}+\frac{1}{2}\bigl\langle \bigl(Q(t,s)+Q^\top(t,s)\bigr)X^\pert_1(s),\sigma_x(s)X^\pert_1(s)+\delta\sigma(s)\1_{[\tau,\tau+\ep)}(s)\bigr\rangle\Bigr\}\,ds\\
	&\hspace{0.5cm}+\int^T_t\Bigl\{\bigl\langle p(t,s),\sigma_x(s)\bigl(X^\pert_1(s)+X^\pert_2(s)\bigr)+\delta\sigma(s)\1_{[\tau,\tau+\ep)}(s)\\
	&\hspace{3.5cm}+\delta\sigma_x(s)X^\pert_1(s)\1_{[\tau,\tau+\ep)}(s)+\frac{1}{2}\sigma_{xx}(s)X^\pert_1(s)X^\pert_1(s)\bigr\rangle\\
	&\hspace{2cm}+\bigl\langle q(t,s),X^\pert_1(s)+X^\pert_2(s)\bigr\rangle\\
	&\hspace{2cm}+\frac{1}{2}\bigl\langle \bigl(P(t,s)+P^\top(t,s)\bigr)X^\pert_1(s),\sigma_x(s)X^\pert_1(s)+\delta\sigma(s)\1_{[\tau,\tau+\ep)}(s)\bigr\rangle\\
	&\hspace{2cm}+\frac{1}{2}\bigl\langle Q(t,s)X^\pert_1(s),X^\pert_1(s)\bigr\rangle\Bigr\}\,dW(s)\displaybreak[1]\\
	&=\langle p(t,t),X^\pert_1(t)+X^\pert_2(t)\rangle+\frac{1}{2}\langle P(t,t)X^\pert_1(t),X^\pert_1(t)\rangle\\
	&\hspace{0.5cm}+\int^T_t\Bigl\{\alpha(t,s)\1_{[\tau,\tau+\ep)}(s)+\langle A_1(t,s),X^\pert_1(s)+X^\pert_2(s)\rangle+\frac{1}{2}\langle A_2(t,s)X^\pert_1(s),X^\pert_1(s)\rangle\\
	&\hspace{2cm}+\langle A_3(t,s),X^\pert_1(s)\rangle\1_{[\tau,\tau+\ep)}(s)\Bigr\}\,ds\\
	&\hspace{0.5cm}+\int^T_t\Bigl\{\beta(t,s)\1_{[\tau,\tau+\ep)}(s)+\langle B_1(t,s),X^\pert_1(s)+X^\pert_2(s)\rangle+\frac{1}{2}\langle B_2(t,s)X^\pert_1(s),X^\pert_1(s)\rangle\\
	&\hspace{2cm}+\langle B_3(t,s),X^\pert_1(s)\rangle\1_{[\tau,\tau+\ep)}(s)\Bigr\}\,dW(s),
\end{align*}
where
\begin{align*}
	&\alpha(t,s):=\langle p(t,s),\delta b(s)\rangle+\langle q(t,s),\delta\sigma(s)\rangle+\frac{1}{2}\langle P(t,s)\delta\sigma(s),\delta\sigma(s)\rangle\ \in\mathbb{R},\\
	&A_1(t,s):=b^\top_x(s)p(t,s)+\sigma^\top_x(s)q(t,s)-k(t,s)\ \in\mathbb{R}^n,\\
	&A_2(t,s):=b^\top_{xx}(s)p(t,s)+\sigma^\top_{xx}(s)q(t,s)+P(t,s)b_x(s)+b^\top_x(s)P(t,s)\\
	&\hspace{2cm}+Q(t,s)\sigma_x(s)+\sigma^\top_x(s)Q(t,s)+\sigma^\top_x(s)P(t,s)\sigma_x(s)-K(t,s)\ \in\mathbb{R}^{n\times n},\\
	&A_3(t,s):=\delta\sigma^\top_x(s)q(t,s)+\frac{1}{2}\bigl(\sigma^\top_x(s)P(t,s)+\sigma^\top_x(s)P^\top(t,s)+Q(t,s)+Q^\top(t,s)\bigr)\delta\sigma(s)\ \in\mathbb{R}^n,\displaybreak[1]\\
	&\beta(t,s):=\langle p(t,s),\delta\sigma(s)\rangle\ \in\mathbb{R},\\
	&B_1(t,s):=\sigma^\top_x(s)p(t,s)+q(t,s)\ \in\mathbb{R}^n,\\
	&B_2(t,s):=\sigma^\top_{xx}(s)p(t,s)+P(t,s)\sigma_x(s)+\sigma^\top_x(s)P(t,s)+Q(t,s)\ \in\mathbb{R}^{n\times n},\\
	&B_3(t,s):=\delta\sigma^\top_x(s)p(t,s)+\frac{1}{2}\bigl(P(t,s)+P^\top(t,s)\bigr)\delta\sigma(s)\ \in\mathbb{R}^n.
\end{align*}


\begin{rem}
\begin{enumerate}
\renewcommand{\labelenumi}{(\roman{enumi})}
\item
The convergence rates of the terms $\alpha(t,s)\1_{[\tau,\tau+\ep)}(s)$ and $\beta(t,s)\1_{[\tau,\tau+\ep)}(s)$ cannot be improved anymore by Taylor expansions. As in the literature~\cite{a_Hu_17}, we include these terms in the variation of the backward equation.
\item
The terms $A_1(\cdot,\cdot)$ and $A_2(\cdot,\cdot)$ include the undetermined processes $k(\cdot,\cdot)$ and $K(\cdot,\cdot)$, respectively, while the terms $B_1(\cdot,\cdot),\,B_2(\cdot,\cdot),\,B_3(\cdot,\cdot)$ do not include either these processes.
\item
We can show that (see Lemma~\ref{lemma: A1} in the appendix)
\begin{equation}\label{estimate A_3}
	\sup_{t\in[\tau,T]}\mathbb{E}\Bigl[\Bigl|\int^T_t\langle A_3(t,s),X^\pert_1(s)\rangle\1_{[\tau,\tau+\ep)}(s)\,ds\Bigr|^2\Bigr]=o(\ep^2).
\end{equation}
\end{enumerate}
\end{rem}

Set
\begin{equation}\label{eta zeta}
	\begin{cases}
		\eta^\pert(t):=\langle p(t,t),X^\pert_1(t)+X^\pert_2(t)\rangle+\frac{1}{2}\langle P(t,t)X^\pert_1(t),X^\pert_1(t)\rangle,\ t\in[\tau,T],\\
		\zeta^\pert(t,s):=\langle B_1(t,s),X^\pert_1(s)+X^\pert_2(s)\rangle+\frac{1}{2}\langle B_2(t,s)X^\pert_1(s),X^\pert_1(s)\rangle\\
		\hspace{2cm}+\langle B_3(t,s),X^\pert_1(s)\rangle\1_{[\tau,\tau+\ep)}(s),\ (t,s)\in[\tau,T]^2,
	\end{cases}
\end{equation}
\begin{equation*}
	\begin{cases}
		\tilde{Y}^\pert(t):=Y^\pert(t)-\eta^\pert(t),\ t\in[\tau,T],\\
		\tilde{Z}^\pert(t,s):=Z^\pert(t,s)-\beta(t,s)\1_{[\tau,\tau+\ep)}(s)-\zeta^\pert(t,s),\ (t,s)\in[\tau,T]^2,
	\end{cases}
\end{equation*}
and
\begin{equation*}
	\begin{cases}
		\bar{Y}^\pert(t):=\tilde{Y}^\pert(t)-\hat{Y}(t),\ t\in[\tau,T],\\
		\bar{Z}^\pert(t,s):=\tilde{Z}^\pert(t,s)-\hat{Z}(t,s),\ (t,s)\in[\tau,T]^2.
	\end{cases}
\end{equation*}
Since $\eta^\pert(\tau)=0$, we have
\begin{equation}\label{perturbation cost}
	J(\tau,\hat{X}(\tau);u^\pert(\cdot))-J(\tau,\hat{X}(\tau);\hat{u}|_{[\tau,T]}(\cdot))=Y^\pert(\tau)-\hat{Y}(\tau)=\bar{Y}^\pert(\tau).
\end{equation}
Furthermore, $(\bar{Y}^\pert(\cdot),\bar{Z}^\pert(\cdot,\cdot))$ satisfies the following BSVIE:
\begin{align*}
	\bar{Y}^\pert(t)=&\psi^\pert_1(t)-\int^T_t\bar{Z}^\pert(t,s)\,dW(s)\\
	&+\int^T_t\Bigl\{f\bigl(t,s,u^\pert(s),X^\pert(s),Y^\pert(s),Z^\pert(t,s)\bigr)-f(t,s)\\
	&\hspace{1.5cm}+\langle A_1(t,s),X^\pert_1(s)+X^\pert_2(s)\rangle+\frac{1}{2}\langle A_2(t,s)X^\pert_1(s),X^\pert_1(s)\rangle\\
	&\hspace{1.5cm}+\alpha(t,s)\1_{[\tau,\tau+\ep)}(s)\Bigr\}\,ds,\hspace{1cm}t\in[\tau,T],
\end{align*}
where
\begin{align*}
	\psi^\pert_1(t):=&h(t,X^\pert(T))-h(t)-\langle h_x(t),X^\pert_1(T)+X^\pert_2(T)\rangle-\frac{1}{2}\langle h_{xx}(t)X^\pert_1(T),X^\pert_1(T)\rangle\\
	&\hspace{1cm}+\int^T_t\langle A_3(t,s),X^\pert_1(s)\rangle\1_{[\tau,\tau+\ep)}(s)\,ds.
\end{align*}
By Lemmas~\ref{lemma: variation state} and the estimate~\eqref{estimate A_3}, we see that
\begin{equation*}
	\sup_{t\in[\tau,T]}\mathbb{E}\bigl[|\psi^\pert_1(t)|^2\bigr]=o(\ep^2).
\end{equation*}
Observe that
\begin{align*}
	&f\bigl(t,s,u^\pert(s),X^\pert(s),Y^\pert(s),Z^\pert(t,s)\bigr)-f(t,s)\\
	&=\Bigl\{f\bigl(t,s,u^\pert(s),X^\pert(s),Y^\pert(s),Z^\pert(t,s)\bigr)\\
	&\hspace{1cm}-f\bigl(t,s,u^\pert(s),\hat{X}(s)+X^\pert_1(s)+X^\pert_2(s),Y^\pert(s),Z^\pert(t,s)\bigr)\Bigr\}\\
	&\hspace{0.5cm}+\Bigl\{f\bigl(t,s,u^\pert(s),\hat{X}(s)+X^\pert_1(s)+X^\pert_2(s),Y^\pert(s),Z^\pert(t,s)\bigr)\\
	&\hspace{1.5cm}-f\bigl(t,s,\hat{u}(s),\hat{X}(s)+X^\pert_1(s)+X^\pert_2(s),Y^\pert(s),\tilde{Z}^\pert(t,s)+\zeta^\pert(t,s)\bigr)\Bigr\}\\
	&\hspace{0.5cm}+\Bigl\{f\bigl(t,s,\hat{u}(s),\hat{X}(s)+X^\pert_1(s)+X^\pert_2(s),\tilde{Y}^\pert(s)+\eta^\pert(s),\tilde{Z}^\pert(t,s)+\zeta^\pert(t,s)\bigr)\\
	&\hspace{1.5cm}-f\bigl(t,s,\hat{u}(s),\hat{X}(s)+X^\pert_1(s)+X^\pert_2(s),\hat{Y}(s)+\eta^\pert(s),\hat{Z}(t,s)+\zeta^\pert(t,s)\bigr)\Bigr\}\\
	&\hspace{0.5cm}+\Bigl\{f\bigl(t,s,\hat{u}(s),\hat{X}(s)+X^\pert_1(s)+X^\pert_2(s),\hat{Y}(s)+\eta^\pert(s),\hat{Z}(t,s)+\zeta^\pert(t,s)\bigr)\\
	&\hspace{1.5cm}-f\bigl(t,s,\hat{u}(s),\hat{X}(s),\hat{Y}(s),\hat{Z}(t,s)\bigr)\Bigr\}\displaybreak[1]\\
	&=:\Lambda^\pert_1(t,s)+\Lambda^\pert_2(t,s)+\Lambda^\pert_3(t,s)+\Lambda^\pert_4(t,s).
\end{align*}
Now let us further observe the terms $\Lambda^\pert_i(\cdot,\cdot)$, $i=1,2,3,4$.
\par
\medskip
$\Lambda^\pert_1(\cdot,\cdot)$: By Lemma~\ref{lemma: variation state}, it can be easily shown that
\begin{equation*}
	\sup_{t\in[\tau,T]}\mathbb{E}\Bigl[\Bigl|\int^T_t\Lambda^\pert_1(t,s)\,ds\Bigr|^2\Bigr]=o(\ep^2).
\end{equation*}
\par
\medskip
$\Lambda^\pert_2(\cdot,\cdot)$: By the definitions of $u^\pert(\cdot)$ and $\tilde{Z}^\pert(\cdot,\cdot)$, we see that
\begin{align*}
	&\Lambda^\pert_2(t,s)=\Lambda^\pert_2(t,s)\1_{[\tau,\tau+\ep)}(s).
\end{align*}
Furthermore, for $(t,s)\in[\tau,\tau+\ep]^2$, $\Lambda^\pert_2(t,s)$ can be written as
\begin{align*}
	&\Lambda^\pert_2(t,s)\\
	&=f\bigl(t,s,v(s),\hat{X}(s),\hat{Y}(s),\hat{Z}(t,s)\mathalpha{+}\beta(t,s)\bigr)-f(t,s)\\
	&+\bigl\langle\tilde{\mathfrak{f}}^\pert_x(t,s),X^\pert_1(s)\mathalpha{+}X^\pert_2(s)\bigr\rangle+\tilde{\mathfrak{f}}^\pert_y(t,s)\bigl(\bar{Y}^\pert(s)\mathalpha{+}\eta^\pert(s)\bigr)+\tilde{\mathfrak{f}}^\pert_z(t,s)\bigl(\bar{Z}^\pert(t,s)\mathalpha{+}\zeta^\pert(t,s)\bigr),
\end{align*}
where, for $\alpha=x,y,z$,
\begin{align*}
	&\tilde{\mathfrak{f}}^\pert_\alpha(t,s):=\int^1_0\partial_\alpha f\bigl(t,s,v(s),\hat{X}(s)+\mu(X^\pert_1(s)+X^\pert_2(s)),\hat{Y}(s)+\mu(\bar{Y}^\pert(s)+\eta^\pert(s)),\\
	&\hspace{4cm}\hat{Z}(t,s)+\beta(t,s)+\mu(\bar{Z}^\pert(t,s)+\zeta^\pert(t,s))\bigr)\,d\mu\\
	&\hspace{2cm}-\int^1_0\partial_\alpha f\bigl(t,s,\hat{u}(s),\hat{X}(s)+\mu(X^\pert_1(s)+X^\pert_2(s)),\hat{Y}(s)+\mu(\bar{Y}^\pert(s)+\eta^\pert(s)),\\
	&\hspace{4.5cm}\hat{Z}(t,s)+\mu(\bar{Z}^\pert(t,s)+\zeta^\pert(t,s))\bigr)\,d\mu.
\end{align*}
We can show that (see Lemma~\ref{lemma: A2} in the appendix)
\begin{align*}
	&\sup_{t\in[\tau,T]}\mathbb{E}\Bigl[\Bigl|\int^T_t\Bigl(\bigl\langle\tilde{\mathfrak{f}}^\pert_x(t,s),X^\pert_1(s)\mathalpha{+}X^\pert_2(s)\bigr\rangle\mathalpha{+}\tilde{\mathfrak{f}}^\pert_y(t,s)\eta^\pert(s)\mathalpha{+}\tilde{\mathfrak{f}}^\pert_z(t,s)\zeta^\pert(t,s)\Bigr)\1_{[\tau,\tau+\ep)}(s)\,ds\Bigr|^2\Bigr]\\
	&=o(\ep^2).
\end{align*}
\par
\medskip
$\Lambda^\pert_3(\cdot,\cdot)$: We have that
\begin{equation*}
	\Lambda^\pert_3(t,s)=\tilde{f}^\pert_y(t,s)\bar{Y}^\pert(s)+\tilde{f}^\pert_z(t,s)\bar{Z}^\pert(t,s)
\end{equation*}
where, for $\alpha=y,z$,
\begin{align*}
	&\tilde{f}^\pert_\alpha(t,s):=\int^1_0\partial_\alpha f\bigl(t,s,\hat{u}(s),\hat{X}(s)+X^\pert_1(s)+X^\pert_2(s),\\
	&\hspace{4cm}\hat{Y}(s)+\eta^\pert(s)+\mu\bar{Y}^\pert(s),\hat{Z}(t,s)+\zeta^\pert(t,s)+\mu\bar{Z}^\pert(t,s)\bigr)\,d\mu.
\end{align*}
\par
\medskip
$\Lambda^\pert_4(\cdot,\cdot)$: Observe that
\begin{align*}
	\Lambda^\pert_4(t,s)&=\bigl\langle f_x(t,s),X^\pert_1(s)+X^\pert_2(s)\bigr\rangle+f_y(t,s)\eta^\pert(s)+f_z(t,s)\zeta^\pert(t,s)\\
	&\hspace{0.5cm}+\frac{1}{2}\left\langle D^2\tilde{f}^\pert(t,s)\!\left(\!\!
    \begin{array}{c}
      X^\pert_1(s)\mathalpha{+}X^\pert_2(s)\\
      \eta^\pert(s)\\
      \zeta^\pert(t,s)
    \end{array}
  \!\!\right),\left(\!\!
    \begin{array}{c}
      X^\pert_1(s)\mathalpha{+}X^\pert_2(s)\\
      \eta^\pert(s)\\
      \zeta^\pert(t,s)
    \end{array}
  \!\!\right)\right\rangle\displaybreak[1]\\
	&=\bigl\langle f_x(t,s)+f_y(t,s)p(s,s)+f_z(t,s)B_1(t,s),X^\pert_1(s)+X^\pert_2(s)\bigr\rangle\\
	&\hspace{0.5cm}+\frac{1}{2}\bigl\langle\bigl(f_y(t,s)P(s,s)+f_z(t,s)B_2(t,s)\bigr)X^\pert_1(s),X^\pert_1(s)\bigr\rangle\\
	&\hspace{0.5cm}+f_z(t,s)\langle B_3(t,s),X^\pert_1(s)\rangle\1_{[\tau,\tau+\ep)}(s)\\
	&\hspace{0.5cm}+\frac{1}{2}\left\langle D^2\tilde{f}^\pert(t,s)\!\left(\!\!
    \begin{array}{c}
      X^\pert_1(s)\mathalpha{+}X^\pert_2(s)\\
      \eta^\pert(s)\\
      \zeta^\pert(t,s)
    \end{array}
  \!\!\right),\left(\!\!
    \begin{array}{c}
      X^\pert_1(s)\mathalpha{+}X^\pert_2(s)\\
      \eta^\pert(s)\\
      \zeta^\pert(t,s)
    \end{array}
  \!\!\right)\right\rangle.
\end{align*}
Here we used the following notation:
\begin{align*}
	&D^2\tilde{f}^\pert(t,s):=2\int^1_0\int^1_0\lambda D^2f\bigl(t,s,\hat{u}(s),\hat{X}(s)+\lambda\mu(X^\pert_1(s)+X^\pert_2(s)),\\
	&\hspace{6cm}\hat{Y}(s)+\lambda\mu\eta^\pert(s),\hat{Z}(t,s)+\lambda\mu\zeta^\pert(t,s)\bigr)\,d\lambda\,d\mu.
\end{align*}
Furthermore, we can show that (see Lemma~\ref{lemma: A3} in the appendix)
\begin{equation*}
	\sup_{t\in[\tau,T]}\mathbb{E}\Bigl[\Bigl|\int^T_tf_z(t,s)\langle B_3(t,s),X^\pert_1(s)\rangle\1_{[\tau,\tau+\ep)}(s)\,ds\Bigr|^2\Bigr]=o(\ep^2)
\end{equation*}
and
\begin{align*}
	&\sup_{t\in[\tau,T]}\mathbb{E}\Bigl[\Bigl|\int^T_t\Bigl\{\left\langle D^2\tilde{f}^\pert(t,s)\!\left(\!\!
    \begin{array}{c}
      X^\pert_1(s)\mathalpha{+}X^\pert_2(s)\\
      \eta^\pert(s)\\
      \zeta^\pert(t,s)
    \end{array}
  \!\!\right),\left(\!\!
    \begin{array}{c}
      X^\pert_1(s)\mathalpha{+}X^\pert_2(s)\\
      \eta^\pert(s)\\
      \zeta^\pert(t,s)
    \end{array}
  \!\!\right)\right\rangle\\
	&\hspace{3cm}-\langle G(t,s)X^\pert_1(s),X^\pert_1(s)\rangle\Bigr\}\,ds\Bigr|^2\Bigr]\\
	&=o(\ep^2),
\end{align*}
where
\begin{equation*}	
	G(t,s):=[I_{n\times n},p(s,s),\sigma^\top_x(s)p(t,s)\mathalpha{+}q(t,s)]D^2f(t,s)[I_{n\times n},p(s,s),\sigma^\top_x(s)p(t,s)\mathalpha{+}q(t,s)]^\top.
\end{equation*}
\par
\medskip
By the above observations, we obtain
\begin{align*}
	&\bar{Y}^\pert(t)=\psi^\pert_2(t)-\int^T_t\bar{Z}^\pert(t,s)\,dW(s)\\
	&+\int^T_t\Bigl\{\Bigl(\tilde{f}^\pert_y(t,s)\mathalpha{+}\tilde{\mathfrak{f}}^\pert_y(t,s)\1_{[\tau,\tau+\ep)}(s)\Bigr)\bar{Y}^\pert(s)+\Bigl(\tilde{f}^\pert_z(t,s)\mathalpha{+}\tilde{\mathfrak{f}}^\pert_z(t,s)\1_{[\tau,\tau+\ep)}(s)\Bigr)\bar{Z}^\pert(t,s)\\
	&\hspace{1.3cm}+\Bigl\langle A_1(t,s)\mathalpha{+}f_x(t,s)\mathalpha{+}f_y(t,s)p(s,s)\mathalpha{+}f_z(t,s)B_1(t,s),X^\pert_1(s)\mathalpha{+}X^\pert_2(s)\Bigr\rangle\\
	&\hspace{1.3cm}+\frac{1}{2}\Bigl\langle\Bigl(A_2(t,s)\mathalpha{+}G(t,s)\mathalpha{+}f_y(t,s)P(s,s)\mathalpha{+}f_z(t,s)B_2(t,s)\Bigr)X^\pert_1(s),X^\pert_1(s)\Bigr\rangle\\
	&\hspace{1.3cm}+\Bigl(\alpha(t,s)+f\bigl(t,s,v(s),\hat{X}(s),\hat{Y}(s),\hat{Z}(t,s)\mathalpha{+}\beta(t,s)\bigr)-f(t,s)\Bigr)\1_{[\tau,\tau+\ep)}(s)\Bigr\}\,ds,\\
	&\hspace{10cm}t\in[\tau,T],
\end{align*}
for some $\psi^\pert_2(\cdot)$ satisfying
\begin{equation*}
	\sup_{t\in[\tau,T]}\mathbb{E}\bigl[|\psi^\pert_2(t)|^2\bigr]=o(\ep^2).
\end{equation*}
Recall that $A_1(\cdot,\cdot)$ and $A_2(\cdot,\cdot)$ include the undetermined processes $k(\cdot,\cdot)$ and $K(\cdot,\cdot)$, respectively, while $B_1(\cdot,\cdot)$ and $B_2(\cdot,\cdot)$ do not include either these processes. Therefore, if we set
\begin{equation}\label{k}
\begin{split}
	&k(t,s)=b^\top_x(s)p(t,s)+\sigma^\top_x(s)q(t,s)\\
	&\hspace{1.5cm}+f_x(t,s)+f_y(t,s)p(s,s)+f_z(t,s)(\sigma^\top_x(s)p(t,s)+q(t,s))
\end{split}
\end{equation}
and
\begin{align}\label{K}
\begin{split}
	&K(t,s)=b^\top_{xx}(s)p(t,s)+\sigma^\top_{xx}(s)q(t,s)+P(t,s)b_x(s)+b^\top_x(s)P(t,s)\\
	&\hspace{1.5cm}+Q(t,s)\sigma_x(s)+\sigma^\top_x(s)Q(t,s)+\sigma^\top_x(s)P(t,s)\sigma_x(s)\\
	&\hspace{1.5cm}+[I_{n\times n},p(s,s),\sigma^\top_x(s)p(t,s)\mathalpha{+}q(t,s)]D^2f(t,s)[I_{n\times n},p(s,s),\sigma^\top_x(s)p(t,s)\mathalpha{+}q(t,s)]^\top\\
	&\hspace{1.5cm}+f_y(t,s)P(s,s)+f_z(t,s)\bigl(\sigma^\top_{xx}(s)p(t,s)+P(t,s)\sigma_x(s)+\sigma^\top_x(s)P(t,s)+Q(t,s)\bigr),
\end{split}
\end{align}
then we obtain the EBSVIEs~\eqref{EBSVIE 1} and \eqref{EBSVIE 2}, and it holds that
\begin{align*}
	&\bar{Y}^\pert(t)=\psi^\pert_2(t)-\int^T_t\bar{Z}^\pert(t,s)\,dW(s)\\
	&+\int^T_t\Bigl\{\Bigl(\tilde{f}^\pert_y(t,s)\mathalpha{+}\tilde{\mathfrak{f}}^\pert_y(t,s)\1_{[\tau,\tau+\ep)}(s)\Bigr)\bar{Y}^\pert(s)+\Bigl(\tilde{f}^\pert_z(t,s)\mathalpha{+}\tilde{\mathfrak{f}}^\pert_z(t,s)\1_{[\tau,\tau+\ep)}(s)\Bigr)\bar{Z}^\pert(t,s)\\
	&\hspace{1.5cm}+\Bigl(\alpha(t,s)+f\bigl(t,s,v(s),\hat{X}(s),\hat{Y}(s),\hat{Z}(t,s)\mathalpha{+}\beta(t,s)\bigr)-f(t,s)\Bigr)\1_{[\tau,\tau+\ep)}(s)\Bigr\}\,ds,\\
	&\hspace{10cm}t\in[\tau,T].
\end{align*}
Note that, on $[\tau+\ep,T]$, $(\bar{Y}^\pert(\cdot),\bar{Z}^\pert(\cdot,\cdot))$ satisfies the following BSVIE:
\begin{align*}
	&\bar{Y}^\pert(t)=\psi^\pert_2(t)+\int^T_t\Bigl(\tilde{f}^\pert_y(t,s)\bar{Y}^\pert(s)+\tilde{f}^\pert_z(t,s)\bar{Z}^\pert(t,s)\Bigr)\,ds-\int^T_t\bar{Z}^\pert(t,s)\,dW(s),\\
	&\hspace{10cm}t\in[\tau+\ep,T].
\end{align*}
By the standard estimate~\eqref{BSVIE estimate} of the solution of the BSVIE, we have the following estimate:
\begin{equation*}
	\sup_{t\in[\tau+\ep,T]}\mathbb{E}\Bigl[|\bar{Y}^\pert(t)|^2+\int^T_t|\bar{Z}^\pert(t,s)|^2\,ds\Bigr]\leq C\sup_{t\in[\tau+\ep,T]}\mathbb{E}\bigl[|\psi^\pert_2(t)|^2\bigr]=o(\ep^2).
\end{equation*}
Similarly, from the estimate~\eqref{EBSVIE estimate}, we have that
\begin{equation*}
	\sup_{t\in[\tau,\tau+\ep]}\mathbb{E}\Bigl[\int^T_{\tau+\ep}|\bar{Z}^\pert(t,s)|^2\,ds\Bigr]=o(\ep^2).
\end{equation*}
Therefore, by defining
\begin{equation*}
	\psi^\pert_3(t):=\mathbb{E}_{\tau+\ep}\Bigl[\psi^\pert_2(t)+\int^T_{\tau+\ep}\Bigl(\tilde{f}^\pert_y(t,s)\bar{Y}^\pert(s)+\tilde{f}^\pert_z(t,s)\bar{Z}^\pert(t,s)\Bigr)\,ds\Bigr]
\end{equation*}
for $t\in[\tau,\tau+\ep]$, then we have
\begin{equation*}
	\sup_{t\in[\tau,\tau+\ep]}\mathbb{E}\bigl[|\psi^\pert_3(t)|^2\bigr]=o(\ep^2).
\end{equation*}
Moreover, $(\bar{Y}^\pert(\cdot),\bar{Z}^\pert(\cdot,\cdot))$ satisfies the following BSVIE on $[\tau,\tau+\ep]$:
\begin{equation}\label{bar Y BSVIE}
\begin{split}
	&\bar{Y}^\pert(t)=\psi^\pert_3(t)+\int^{\tau+\ep}_t\Bigl\{\bigl(\tilde{f}^\pert_y(t,s)\mathalpha{+}\tilde{\mathfrak{f}}^\pert_y(t,s)\bigr)\bar{Y}^\pert(s)+\bigl(\tilde{f}^\pert_z(t,s)\mathalpha{+}\tilde{\mathfrak{f}}^\pert_z(t,s)\bigr)\bar{Z}^\pert(t,s)\\
	&\hspace{4.5cm}+\alpha(t,s)+f\bigl(t,s,v(s),\hat{X}(s),\hat{Y}(s),\hat{Z}(t,s)\mathalpha{+}\beta(t,s)\bigr)-f(t,s)\Bigr\}\,ds\\
	&\hspace{2cm}-\int^{\tau+\ep}_t\bar{Z}^\pert(t,s)\,dW(s),\ t\in[\tau,\tau+\ep].
\end{split}
\end{equation}
Motivated by equation~\eqref{bar Y BSVIE}, we introduce the following (trivial) BSVIE for $(\check{Y}^\pert(\cdot),\check{Z}^\pert(\cdot,\cdot))$:
\begin{align*}
	&\check{Y}^\pert(t)=\int^{\tau+\ep}_t\Bigl\{\alpha(t,s)+f\bigl(t,s,v(s),\hat{X}(s),\hat{Y}(s),\hat{Z}(t,s)\mathalpha{+}\beta(t,s)\bigr)-f(t,s)\Bigr\}\,ds\\
	\nonumber&\hspace{2cm}-\int^{\tau+\ep}_t\check{Z}^\pert(t,s)\,dW(s),\ t\in[\tau,\tau+\ep].
\end{align*}
Note that, for $t\in[\tau,\tau+\ep]$, we have
\begin{equation}\label{the conditional expectation}
	\check{Y}^\pert(t)=\mathbb{E}_t\Bigl[\int^{\tau+\ep}_t\Bigl\{\alpha(t,s)+f\bigl(t,s,v(s),\hat{X}(s),\hat{Y}(s),\hat{Z}(t,s)\mathalpha{+}\beta(t,s)\bigr)-f(t,s)\Bigr\}\,ds\Bigr].
\end{equation}
By using the standard estimates of the solutions of BSVIEs, we can show the following estimate (see Lemma~\ref{lemma: A4} in the appendix):
\begin{equation}\label{cost expansion}
	\sup_{t\in[\tau,\tau+\ep]}\mathbb{E}\Bigl[|\bar{Y}^\pert(t)-\check{Y}^\pert(t)|^2+\int^{\tau+\ep}_t|\bar{Z}^\pert(t,s)-\check{Z}^\pert(t,s)|^2\,ds\Bigr]=o(\ep^2).
\end{equation}
Therefore, by the equalities~\eqref{perturbation cost},\,\eqref{the conditional expectation} and the estimate~\eqref{cost expansion}, we obtain the following equality:
\begin{equation}\label{cost representation 0}
\begin{split}
	&J(\tau,\hat{X}(\tau);u^\pert(\cdot))-J(\tau,\hat{X}(\tau);\hat{u}|_{[\tau,T]}(\cdot))\\
	&=\mathbb{E}_\tau\Bigl[\int^{\tau+\ep}_\tau\Bigl\{\langle p(\tau,s),\delta b(s)\rangle+\langle q(\tau,s),\delta\sigma(s)\rangle+\frac{1}{2}\langle P(\tau,s)\delta\sigma(s),\delta\sigma(s)\rangle\\
	&\hspace{2cm}+f\bigl(\tau,s,v(s),\hat{X}(s),\hat{Y}(s),\hat{Z}(\tau,s)\mathalpha{+}\langle p(\tau,s),\delta\sigma(s)\rangle\bigr)-f(\tau,s)\Bigr\}\,ds\Bigr]+\tilde{R}^\pert\ \text{a.s.}
\end{split}
\end{equation}
where $\tilde{R}^\pert$ is an $\mathcal{F}_\tau$-measurable random variable such that $\mathbb{E}\bigl[|\tilde{R}^\pert|^2\bigr]=o(\ep^2)$.
\par
\medskip
Unfortunately, we cannot use the Lebesgue differentiation theorem directly for the integrand in the right-hand side of \eqref{cost representation 0} since it depends on $\tau$. Note that the terms $p(\cdot,\cdot)$ and $P(\cdot,\cdot)$ can be treated easily since they have the reasonable continuity:
\begin{align*}
	&\mathbb{E}\Bigl[\Bigl|\int^{\tau+\ep}_\tau\langle p(\tau,s)-p(s,s),\delta b(s)\rangle\,ds\Bigr|^2\Bigr]=o(\ep^2)
\shortintertext{and}
	&\mathbb{E}\Bigl[\Bigl|\int^{\tau+\ep}_\tau\Bigl\langle\bigl(P(\tau,s)-P(s,s)\bigr)\delta\sigma(s),\delta\sigma(s)\Bigr\rangle\,ds\Bigr|^2\Bigr]=o(\ep^2).
\end{align*}
However, the cases of $q(\cdot,\cdot)$ and $\hat{Z}(\cdot,\cdot)$ are more delicate. We have to define the terms ``$q(s,s)$'' and ``$\hat{Z}(s,s)$'' in rigorous ways. To do so, we introduce the diagonal processes $\mathrm{Diag}[q](\cdot)$ and $\mathrm{Diag}[\hat{Z}](\cdot)$ of $q(\cdot,\cdot)$ and $\hat{Z}(\cdot,\cdot)$, respectively, which (uniquely) exist under our assumptions; see Lemma~\ref{lemma: Z diagonal}. The following lemma plays a key role in our study.


\begin{lemm}
For any $v(\cdot)\in\mathcal{U}[t_0,T]$ and $\tau\in[t_0,T)$, It holds that
\begin{align*}
	&\mathbb{E}\Biggl[\Biggl|\mathbb{E}_\tau\Bigl[\int^{\tau+\ep}_\tau\Bigl\{\langle p(\tau,s),\delta b(s)\rangle+\langle q(\tau,s),\delta\sigma(s)\rangle+\frac{1}{2}\langle P(\tau,s)\delta\sigma(s),\delta\sigma(s)\rangle\\
	&\hspace{2.5cm}+f\bigl(\tau,s,v(s),\hat{X}(s),\hat{Y}(s),\hat{Z}(\tau,s)\mathalpha{+}\langle p(\tau,s),\delta\sigma(s)\rangle\bigr)-f(\tau,s)\Bigr\}\,ds\Bigr]\\
	&\hspace{1cm}-\mathbb{E}_\tau\Bigl[\int^{\tau+\ep}_\tau\bigl(\mathcal{H}(s,v(s);t_0,x_{t_0};\hat{u}(\cdot))-\mathcal{H}(s,\hat{u}(s);t_0,x_{t_0};\hat{u}(\cdot))\bigr)\,ds\Bigr]\Biggr|^2\Biggr]=o(\ep^2),
\end{align*}
where $\mathcal{H}(s,v;t_0,x_{t_0};\hat{u}(\cdot))$ is the $\mathcal{H}$-function with respect to $(t_0,x_{t_0})\in\mathcal{I}$ and $\hat{u}(\cdot)\in\mathcal{U}[t_0,T]$ defined by \eqref{H-function}.
\end{lemm}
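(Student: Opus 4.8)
The plan is to prove the estimate by reducing, term by term, every ``off-diagonal'' quantity $p(\tau,s),q(\tau,s),P(\tau,s),\hat Z(\tau,s)$ occurring in the integrand of \eqref{cost representation 0} to the corresponding ``diagonal'' quantity $p(s,s),\mathrm{Diag}[q](s),P(s,s),\mathrm{Diag}[\hat Z](s)$ that enters the definition \eqref{H-function} of the $\mathcal{H}$-function. Since Jensen's inequality gives $\mathbb{E}\bigl[|\mathbb{E}_\tau[\cdot]|^2\bigr]\le\mathbb{E}\bigl[|\cdot|^2\bigr]$, it suffices to show that
\[
	\mathbb{E}\Bigl[\Bigl|\int^{\tau+\ep}_\tau\bigl(\Xi(\tau,s)-\xi(s)\bigr)\,ds\Bigr|^2\Bigr]=o(\ep^2),
\]
where $\Xi(\tau,s)$ denotes the bracketed integrand in \eqref{cost representation 0} and $\xi(s):=\mathcal{H}(s,v(s);t_0,x_{t_0};\hat u(\cdot))-\mathcal{H}(s,\hat u(s);t_0,x_{t_0};\hat u(\cdot))$. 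First I would expand $\xi(s)$ from \eqref{H-function}: because $\delta b(s)$ and $\delta\sigma(s)$ vanish when the control equals $\hat u(s)$, all terms of $\mathcal{H}$ that do not involve the control cancel in the difference, and one is left with
\[
	\xi(s)=\langle p(s,s),\delta b(s)\rangle+\langle\mathrm{Diag}[q](s),\delta\sigma(s)\rangle+\tfrac12\langle P(s,s)\delta\sigma(s),\delta\sigma(s)\rangle+\Delta f(s),
\]
where $\Delta f(s)$ is obtained from the last bracket of $\Xi(\tau,s)$ by replacing $p(\tau,s),\hat Z(\tau,s)$ with $p(s,s),\mathrm{Diag}[\hat Z](s)$ and $f(\tau,s,\cdots)$ with $f(s,s,\cdots)$. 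Hence $\Xi(\tau,s)-\xi(s)$ is a finite sum of increments, each replacing a single $\tau$-evaluation by the corresponding diagonal one.

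The heart of the proof is to show that the time integral over $[\tau,\tau+\ep]$ of each such increment has squared $L^2$-norm $o(\ep^2)$. For $\langle p(\tau,s)-p(s,s),\delta b(s)\rangle$ I would use Theorem~\ref{theorem: EBSVIE derivative} applied to the first-order adjoint equation~\eqref{EBSVIE 1}: it makes $t\mapsto p(t,\cdot)$ a $C^1$, hence Lipschitz, map into $C([t_0,T];L^k_\mathbb{F}(\Omega;C([t_0,T];\mathbb{R}^n)))$, so $\mathbb{E}\bigl[\sup_u|p(\tau,u)-p(s,u)|^k\bigr]^{1/k}\le C|s-\tau|\le C\ep$; Cauchy--Schwarz on $[\tau,\tau+\ep]$ and H\"{o}lder in $\omega$ against the all-moments-bounded process $\delta b$ then give an $O(\ep^4)$ bound. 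The increment $\tfrac12\langle(P(\tau,s)-P(s,s))\delta\sigma(s),\delta\sigma(s)\rangle$ is similar, but it is enough to use that $P$ is a $Y$-type component, so $P(s,s)$ is directly well-defined and $\tfrac1\ep\mathbb{E}\bigl[\int^{\tau+\ep}_\tau|P(\tau,s)-P(s,s)|^k\,ds\bigr]\to0$ (the Remark in Subsection~\ref{subsection: two time-parameters}); together with the quadratic factor $|\delta\sigma(s)|^2$ and one more Cauchy--Schwarz on $[\tau,\tau+\ep]$ this already gives $o(\ep^2)$, with no rate needed. The increment $\langle q(\tau,s)-\mathrm{Diag}[q](s),\delta\sigma(s)\rangle$ is the delicate one: the naive diagonal $q(s,s)$ does not exist (Example~\ref{example: diagonal}), so I would invoke the quantitative estimate~\eqref{q-diagonal estimate} of Proposition~\ref{proposition: EBSVIE} (say with $q'=2$), which, combined with Cauchy--Schwarz and the moment bounds on $\delta\sigma$, yields $o(\ep^4)$. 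Finally, the generator increment $[f(\tau,s,v(s),\hat X(s),\hat Y(s),\hat Z(\tau,s)+\langle p(\tau,s),\delta\sigma(s)\rangle)-f(\tau,s)]-\Delta f(s)$ I would decompose, for the $v$-term and the $\hat u$-term separately, into a ``$z$-shift'' and a ``$t$-shift'': the $z$-shift, which replaces $\hat Z(\tau,s)$ by $\mathrm{Diag}[\hat Z](s)$ and $p(\tau,s)$ by $p(s,s)$ inside $f$, is controlled by the Lipschitz continuity of $f$ in $z$ and so reduces to the same diagonal/$C^1$ estimates as above (now the diagonal estimate for $\hat Z$); the $t$-shift then compares $f(\tau,s,\cdots)$ with $f(s,s,\cdots)$ evaluated at the fixed $L^{k,2}_\mathbb{F}$-process $\mathrm{Diag}[\hat Z](s)$ (together with $\langle p(s,s),\delta\sigma(s)\rangle$, which has all moments), and here I would use that $f$ is $C^1$ in $t$ with $|\partial_tf(t,s,u,x,y,z)|\le L(1+|x|+|y|+|z|)$ (Assumption~\ref{assumption: cost BSVIE}~(\rnum{4})), so that $|f(\tau,s,\cdots)-f(s,s,\cdots)|\le\ep L(1+|\hat X(s)|+|\hat Y(s)|+|z(s)|)$ on $[\tau,\tau+\ep]$; the extra factor $\ep$ supplied by $\partial_tf$, together with Cauchy--Schwarz and the fact that $\int^{\tau+\ep}_\tau\mathbb{E}\bigl[|\mathrm{Diag}[\hat Z](s)|^2\bigr]\,ds\to0$ as $\ep\downarrow0$, produces $o(\ep^3)\subset o(\ep^2)$.

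Summing the finitely many $o(\ep^2)$-bounds and re-applying Jensen concludes the argument. I expect the main obstacle to be exactly the passage from $q(\tau,s)$ and $\hat Z(\tau,s)$ to $\mathrm{Diag}[q](s)$ and $\mathrm{Diag}[\hat Z](s)$: the naive diagonals are ill-defined (Example~\ref{example: diagonal}), so the argument genuinely depends on the quantitative diagonal estimates of Lemma~\ref{lemma: Z diagonal}/Proposition~\ref{proposition: EBSVIE}, which in turn rest on the $C^1$-in-$t$ regularity of the solutions (Theorem~\ref{theorem: EBSVIE derivative}). A secondary subtlety worth flagging is that the $t$-increment of the generator must be estimated through the derivative $\partial_tf$ (Assumption~\ref{assumption: cost BSVIE}~(\rnum{4})) rather than through the mere modulus-of-continuity bound~\eqref{cost assumption 2}, since only the former supplies the additional power of $\ep$ needed to pass from $o(\ep)$ to $o(\ep^2)$.
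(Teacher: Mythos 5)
Your proposal is correct and follows essentially the same route as the paper: the paper's proof likewise isolates $\langle q(\tau,s)-\mathrm{Diag}[q](s),\delta\sigma(s)\rangle$ as the only delicate increment and disposes of it by Cauchy--Schwarz together with the quantitative diagonal estimate~\eqref{q-diagonal estimate} with $p'=4$, $q'=2$, leaving the remaining increments (for $p$, $P$, $\hat{Z}$ and the $t$-shift of $f$) to ``the regularity assumptions of $f$''. Your additional observation that the $t$-increment of $f$ must be controlled through $\partial_tf$ (Assumption~\ref{assumption: cost BSVIE}~(\rnum{4})) rather than through the modulus-of-continuity bound~\eqref{cost assumption 2} --- since $\mathbb{E}\bigl[\int^{\tau+\ep}_\tau|\mathrm{Diag}[\hat{Z}](s)|^2\,ds\bigr]$ is only $o(1)$ and the extra power of $\ep$ has to come from somewhere --- is a correct and worthwhile piece of bookkeeping that the paper leaves implicit.
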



\begin{proof}
We only prove that
\begin{equation}\label{q-diagonal limit}
	\mathbb{E}\Bigl[\Bigl|\int^{\tau+\ep}_\tau\bigl\langle q(\tau,s)-\mathrm{Diag}[q](s),\delta\sigma(s)\bigr\rangle\,ds\Bigr|^2\Bigr]=o(\ep^2).
\end{equation}
Then by using the regularity assumptions of $f$ (Assumption~\ref{assumption: cost BSVIE}), we can easily get the consequence. Concerning the estimate \eqref{q-diagonal limit}, observe that
\begin{align}\label{q estimate}
	\nonumber&\mathbb{E}\Bigl[\Bigl|\int^{\tau+\ep}_\tau\bigl\langle q(\tau,s)-\mathrm{Diag}[q](s),\delta\sigma(s)\bigr\rangle\,ds\Bigr|^2\Bigr]\\
	\nonumber&\leq\mathbb{E}\Bigl[\int^{\tau+\ep}_\tau\bigl|q(\tau,s)-\mathrm{Diag}[q](s)\bigr|^2\,ds\,\int^{\tau+\ep}_\tau\bigl|\delta\sigma(s)\bigr|^2\,ds\Bigr]\displaybreak[1]\\
	&\leq\mathbb{E}\Bigl[\Bigl(\int^{\tau+\ep}_\tau\bigl|q(\tau,s)-\mathrm{Diag}[q](s)\bigr|^2\,ds\Bigr)^2\Bigr]^{1/2}\underbrace{\mathbb{E}\Bigl[\Bigl(\int^{\tau+\ep}_\tau\bigl|\delta\sigma(s)\bigr|^2\,ds\Bigr)^2\Bigr]^{1/2}}_{=O(\ep)}.
\end{align}
By letting $p'=4$ and $q'=2$ in \eqref{q-diagonal estimate}, we obtain
\begin{equation*}
	\mathbb{E}\Bigl[\Bigl(\int^{\tau+\ep}_\tau\bigl|q(\tau,s)-\mathrm{Diag}[q](s)\bigr|^2\,ds\Bigr)^2\Bigr]^{1/4}=o(\ep).
\end{equation*}
Thus, we see that the last term in \eqref{q estimate} is of order $o(\ep^3)$. In particular, the estimate \eqref{q-diagonal limit} holds true.
\end{proof}

Consequently, for any $\hat{u}(\cdot),v(\cdot)\in\mathcal{U}[t_0,T]$, $\tau\in[t_0,T)$ and $\ep\in(0,T-\tau)$, we have that
\begin{equation}\label{cost representation}
\begin{split}
	&J(\tau,\hat{X}(\tau);u^\pert(\cdot))-J(\tau,\hat{X}(\tau);\hat{u}|_{[\tau,T]}(\cdot))\\
	&=\mathbb{E}_\tau\Bigl[\int^{\tau+\ep}_\tau\bigl(\mathcal{H}(s,v(s);t_0,x_{t_0};\hat{u}(\cdot))-\mathcal{H}(s,\hat{u}(s);t_0,x_{t_0};\hat{u}(\cdot))\bigr)\,ds\Bigr]+R^\pert\ \text{a.s.}
\end{split}
\end{equation}
where $R^\pert$ is an $\mathcal{F}_\tau$-measurable random variable such that $\mathbb{E}\bigl[|R^\pert|^2\bigr]=o(\ep^2)$. The classical Lebesgue differentiation theorem can be applied to the integrand of the right-hand side of the above equality, since it does not depend on $\tau$; see Lemma~\ref{lemma: A5} in the appendix.
\par
\medskip
Now we are ready to prove our main result.


\begin{proof}[Proof of Theorem~\ref{theorem: main result}]
\emph{Sufficiency}: Suppose that $\hat{u}(\cdot)\in\mathcal{U}[t_0,T]$ satisfies \eqref{characterization}. Then for any $v(\cdot)\in\mathcal{U}[t_0,T]$, $\tau\in[t_0,T)$, and $\ep\in(0,T-\tau)$, equality~\eqref{cost representation} yields that
\begin{equation*}
	J(\tau,\hat{X}(\tau);u^\pert(\cdot))-J(\tau,\hat{X}(\tau);\hat{u}|_{[\tau,T]}(\cdot))\geq R^\pert\ \text{a.s.}
\end{equation*}
where $R^\pert$ is an $\mathcal{F}_\tau$-measurable random variable such that $\mathbb{E}\bigl[|R^\pert|^2\bigr]=o(\ep^2)$. Therefore, for any nonnegative, bounded and $\mathcal{F}_\tau$-measurable random variable $\xi_\tau$, it holds that
\begin{equation*}
	\mathbb{E}\Biggl[\frac{J(\tau,\hat{X}(\tau);u^\pert(\cdot))-J(\tau,\hat{X}(\tau);\hat{u}|_{[\tau,T]}(\cdot))}{\ep}\,\xi_\tau\Biggr]\geq\frac{1}{\ep}\mathbb{E}\bigl[R^\pert\xi_\tau\bigr]\overset{\ep\downarrow0}{\longrightarrow}0,
\end{equation*}
and hence $\hat{u}(\cdot)$ is an open-loop equilibrium control with respect to $(t_0,x_{t_0})\in\mathcal{I}$.
\par
\emph{Necessity}: Suppose that $\hat{u}(\cdot)\in\mathcal{U}[t_0,T]$ is an open-loop equilibrium control with respect to $(t_0,x_{t_0})\in\mathcal{I}$. Fix an element $v\in U$ and define $v(\cdot)\in\mathcal{U}[t_0,T]$ by $v(\cdot)\equiv v$. By the definition of open-loop equilibrium controls and equality \eqref{cost representation}, we have, for any $\tau\in[t_0,T)$ and any nonnegative, bounded and $\mathcal{F}_\tau$-measurable random variable $\xi_\tau$,
\begin{equation*}
	\liminf_{\ep\downarrow0}\frac{1}{\ep}\mathbb{E}\Bigl[\int^{\tau+\ep}_\tau\bigl(\mathcal{H}(s,v;t_0,x_{t_0};\hat{u}(\cdot))-\mathcal{H}(s,\hat{u}(s);t_0,x_{t_0};\hat{u}(\cdot))\bigr)\,ds\,\xi_\tau\Bigr]\geq0.
\end{equation*}
This implies that (see Lemma~\ref{lemma: A5} in the appendix)
\begin{equation*}
	\mathcal{H}(s,v;t_0,x_{t_0};\hat{u}(\cdot))-\mathcal{H}(s,\hat{u}(s);t_0,x_{t_0};\hat{u}(\cdot))\geq0
\end{equation*}
for $\mathrm{Leb}_{[t_0,T]}\otimes\mathbb{P}$-a.e.\,$(s,\omega)\in[t_0,T]\times\Omega$. Since the control space $(U,d)$ is a separable metric space and the $\mathcal{H}$-function is continuous in $v\in U$, we obtain \eqref{characterization}.
\end{proof}


\begin{rem}\label{remark: last remark}
By Theorem~\ref{theorem: main result} and equality~\eqref{cost representation}, we see that if $\hat{u}(\cdot)\in\mathcal{U}[t_0,T]$ is an open-loop equilibrium control with respect to $(t_0,x_{t_0})\in\mathcal{I}$, then for any $v(\cdot)\in\mathcal{U}[t_0,T]$ and $\tau\in[t_0,T)$, there exists a sequence $\{\ep_k\}_{k\in\mathbb{N}}\subset(0,T-\tau)$ such that $\lim_{k\to\infty}\ep_k=0$ and
\begin{equation*}
	\liminf_{k\to\infty}\frac{J(\tau,\hat{X}(\tau);u^{\tau,\ep_k}(\cdot))-J(\tau,\hat{X}(\tau);\hat{u}|_{[\tau,T]}(\cdot))}{\ep_k}\geq0\ \text{a.s.}
\end{equation*}
We emphasize that this is not trivial from the definition, but a consequence of our analysis. Also, the above is comparable to the original ``definition'' \eqref{definition'} of open-loop equilibrium controls introduced in \cite{a_Hu-Jin-Zhou_12,a_Hu-Jin-Zhou_17}.
\end{rem}


\section*{Acknowledgments}

The author would like to thank the editor and the referees for their constructive comments and suggestions. This work was supported by JSPS KAKENHI Grant Number JP18J20973.


\bibliography{reference}

\begin{thebibliography}{10}

\bibitem{a_Alia_19}
I.~Alia.
\newblock A non-exponential discounting time-inconsistent stochastic optimal
  control problem for jump-diffusion.
\newblock {\em Math. Control Relat. Fields}, 9(3):541--570, 2019.

\bibitem{a_Alia-_17}
I.~Alia, F.~Chighoub, N.~Khelfallah, and J.~Vives.
\newblock Time-consistent investment and consumption strategies under a general
  discount function.
\newblock {\em preprint}, arXiv:1705.10602.

\bibitem{a_Bjork-_17}
T.~Bj\"{o}rk, M.~Khapko, and A.~Murgoci.
\newblock On time-inconsistent stochastic control in continuous time.
\newblock {\em Finance Stoch.}, 21(2):331--360, 2017.

\bibitem{a_Djehiche-Huang_16}
B.~Djehiche and M.~Huang.
\newblock A characterization of sub-game perfect {N}ash equilibria for {SDE}s
  of mean-field type.
\newblock {\em Dyn. Games Appl.}, 6(1):55--81, 2016.

\bibitem{a_Ekeland-Pirvu_08}
I.~Ekeland and T.~A. Pirvu.
\newblock Investment and consumption without commitment.
\newblock {\em Math. Financ. Econ.}, 2(1):57--86, 2008.

\bibitem{a_ElKaroui-_97}
N.~El~Karoui, S.~Peng, and M.~C. Quenez.
\newblock Backward stochastic differential equations in finance.
\newblock {\em Math. Finance}, 7(1):1--71, 1997.

\bibitem{a_Hamaguchi_20}
Y.~Hamaguchi.
\newblock Small-time solvability of a flow of forward-backward stochastic
  differential equations.
\newblock {\em Appl. Math. Optim.}, 2020.

\bibitem{a_Hamaguchi_19}
Y.~Hamaguchi.
\newblock Time-inconsistent consumption-investment problems in incomplete
  markets under general discount functions.
\newblock {\em preprint}, arXiv:1912.01281.

\bibitem{a_Hernandez-Possamai_20'}
C.~Hern\'{a}ndez and D.~Possama\"{i}.
\newblock A unified approach to well-posedness of type-{\Rnum{1}} backward
  stochastic {V}olterra integral equations.
\newblock {\em preprint}, arXiv:2007.12258.

\bibitem{b_Hille-Phillips_57}
E.~Hille and R.~S. Phillips.
\newblock {\em Functional analysis and semi-groups}.
\newblock AMS, Providence, revised edition, 1957.

\bibitem{a_Hu_17}
M.~Hu.
\newblock Stochastic global maximum principle for optimization with recursive
  utilities.
\newblock {\em Probab. Uncertain. Quant. Risk}, 2(1):1--20, 2017.

\bibitem{a_Hu-Huang-Li_17}
Y.~Hu, J.~Huang, and X.~Li.
\newblock Equilibrium for time-inconsistent stochastic linear-quadratic control
  under constraint.
\newblock {\em preprint}, arXiv:1703.09415.

\bibitem{a_Hu-Jin-Zhou_12}
Y.~Hu, H.~Jin, and X.~Y. Zhou.
\newblock Time-inconsistent stochastic linear-quadratic control.
\newblock {\em SIAM J. Control Optim.}, 50(3):1548--1572, 2012.

\bibitem{a_Hu-Jin-Zhou_17}
Y.~Hu, H.~Jin, and X.~Y. Zhou.
\newblock Time-inconsistent stochastic linear-quadratic control:
  characterization and uniqueness of equilibrium.
\newblock {\em SIAM J. Control Optim.}, 55(2):1261--1279, 2017.

\bibitem{a_Lin_02}
J.~Lin.
\newblock Adapted solutions of a backward stochastic nonlinear {V}olterra
  integral equation.
\newblock {\em Stoch. Anal. Appl.}, 20:165--183, 2002.

\bibitem{a_Peng_90}
S.~Peng.
\newblock A general stochastic maximum principle for optimal control problems.
\newblock {\em SIAM J. Control Optim.}, 28(4):966--979, 1990.

\bibitem{a_Shi-Wang_12}
Y.~Shi and T.~Wang.
\newblock Solvability of general backward stochastic {V}olterra integral
  equations.
\newblock {\em J. Korean Math. Soc.}, 49(6):1301--1321, 2012.

\bibitem{a_Shi-Wang-Yong_15}
Y.~Shi, T.~Wang, and J.~Yong.
\newblock Optimal control problems of forward-backward stochastic {V}olterra
  integral equations.
\newblock {\em Math. Control Relat. Fields}, 5(3):613--649, 2015.

\bibitem{a_Shi-Wen-Xiong_20}
Y.~Shi, J.~Wen, and J.~Xiong.
\newblock Backward doubly stochastic {V}olterra integral equations and their
  applications.
\newblock {\em J. Differential Equations}, 269(9):6492--6528, 2020.

\bibitem{a_Strotz_73}
R.~Strotz.
\newblock Myopia and inconsistency in dynamic utility maximization.
\newblock {\em Readings in Welfare Economics}, 23:165--180, 1973.

\bibitem{a_Wang_20}
H.~Wang.
\newblock Extended backward stochastic {V}olterra integral equations,
  quasilinear parabolic equations, and {F}eynman--{K}ac formula.
\newblock {\em Stoch. Dyn.}, 2020.

\bibitem{a_Wang-Sun-Yong_19}
H.~Wang, J.~Sun, and J.~Yong.
\newblock Recursive utility processes, dynamic risk measures and quadratic
  backward stochastic {V}olterra integral equations.
\newblock {\em Appl. Math. Optim.}, 2019.

\bibitem{a_HWang-Yong_19}
H.~Wang and J.~Yong.
\newblock Time-inconsistent stochastic optimal control problems and backward
  stochastic {V}olterra integral equations.
\newblock {\em preprint}, arXiv:1911.04995.

\bibitem{a_TWang_19}
T.~Wang.
\newblock Characterization of equilibrium controls in time inconsistent
  mean-field stochastic linear quadratic problems. {\Rnum{1}}.
\newblock {\em Math. Control Relat. Fields}, 9:385--409, 2019.

\bibitem{a_TWang_20}
T.~Wang.
\newblock Equilibrium controls in time inconsistent stochastic linear quadratic
  problems.
\newblock {\em Appl. Math. Optim.}, 81:591--619, 2020.

\bibitem{a_Wang-Yong_19}
T.~Wang and J.~Yong.
\newblock Backward stochastic {V}olterra integral equations--- representation
  of adapted solutions.
\newblock {\em Stoch. Proc. Appl.}, 129(12):4926--4964, 2019.

\bibitem{a_Wang-Zhang_17}
T.~Wang and H.~Zhang.
\newblock Optimal control problems of forward-backward stochastic {V}olterra
  integral equations with closed control regions.
\newblock {\em SIAM J. Control Optim.}, 55(4):2574--2602, 2017.

\bibitem{a_Wei_17}
Q.~Wei, J.~Yong, and Z.~Yu.
\newblock Time-inconsistent recursive stochastic optimal control problems.
\newblock {\em SIAM J. Control Optim.}, 55(6):4156--4201, 2017.

\bibitem{a_Yan-Yong_19}
W.~Yan and J.~Yong.
\newblock Time-inconsistent optimal control problems and related issues.
\newblock {\it Modeling, Stochastic Control, Optimization, and Applications,
  Springer International Publishing}, 533--569, 2019.

\bibitem{a_Yong_06}
J.~Yong.
\newblock Backward stochastic {V}olterra integral equations and some related
  problems.
\newblock {\em Stochastic Process. Appl.}, 116(5):779--795, 2006.

\bibitem{a_Yong_07}
J.~Yong.
\newblock Continuous-time dynamic risk measures by backward stochastic
  {V}olterra integral equations.
\newblock {\em Appl. Anal.}, 86:1429--1442, 2007.

\bibitem{a_Yong_08}
J.~Yong.
\newblock Well-posedness and regularity of backward stochastic {V}olterra
  integral equations.
\newblock {\em Probab. Theory Related Fields}, 142(1-2):21--77, 2008.

\bibitem{a_Yong_12}
J.~Yong.
\newblock Time-inconsistent optimal control problems and the equilibrium {HJB}
  equation.
\newblock {\em Math. Control Relat. Fields}, 2(3):271--329, 2012.

\bibitem{a_Yong_17}
J.~Yong.
\newblock Linear-quadratic optimal control problems for mean-field stochastic
  differential equations---time-consistent solutions.
\newblock {\em Trans. Amer. Math. Soc.}, 369(8):5467--5523, 2017.

\bibitem{b_Yong-Zhou_99}
J.~Yong and X.~Y. Zhou.
\newblock {\em Stochastic controls: Hamiltonian systems and HJB equations}.
\newblock Springer, New York, 1999.

\bibitem{b_Zhang_17}
J.~Zhang.
\newblock {\em Backward stochastic differential equations:\ From linear to
  fully nonlinear theory}.
\newblock Springer, New York, 2017.

\end{thebibliography}


\appendix
\section{Appendix}\label{appendix}

In this appendix, we prove some technical estimates appearing in Section~\ref{section: proof of main result}. In Lemmas~\ref{lemma: A1},\,\ref{lemma: A2},\,\ref{lemma: A3}, and \ref{lemma: A4}, we use the same notation as in Section~\ref{section: proof of main result}. Lemma~\ref{lemma: A5} is an abstract result which we used in the proof of the necessity part of Theorem~\ref{theorem: main result}.


\begin{lemm}\label{lemma: A1}
It holds that
\begin{equation*}
	\sup_{t\in[\tau,T]}\mathbb{E}\Bigl[\Bigl|\int^T_t\langle A_3(t,s),X^\pert_1(s)\rangle\1_{[\tau,\tau+\ep)}(s)\,ds\Bigr|^2\Bigr]=o(\ep^2). 
\end{equation*}
\end{lemm}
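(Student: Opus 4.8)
The plan is to bound the $ds$-integral by brute force, exploiting the indicator $\1_{[\tau,\tau+\ep)}$ to reduce the range of integration to an interval of length $\ep$, and then use the a priori size estimates for $X^\pert_1(\cdot)$ from Lemma~\ref{lemma: variation state} together with the integrability of the coefficients appearing in $A_3(t,s)$. Recall
\begin{equation*}
	A_3(t,s)=\delta\sigma^\top_x(s)q(t,s)+\tfrac12\bigl(\sigma^\top_x(s)P(t,s)+\sigma^\top_x(s)P^\top(t,s)+Q(t,s)+Q^\top(t,s)\bigr)\delta\sigma(s).
\end{equation*}
Here $\sigma_x$ and $\delta\sigma_x$ are uniformly bounded (Assumption~\ref{assumption: state SDE}), and $\delta\sigma(\cdot)$ is bounded since $\sigma$ is Lipschitz in $x$ and $\hat X(\cdot)$ has moments of all orders; the genuinely unbounded factors are $q(t,\cdot)$, $P(t,\cdot)$, $Q(t,\cdot)$, which nevertheless lie in $L^{p,2}_\mathbb{F}$ uniformly in $t$ by Proposition~\ref{proposition: EBSVIE} (or the a priori estimate~\eqref{EBSVIE estimate}).

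First I would fix $t\in[\tau,T]$ and, writing $[t,T]\cap[\tau,\tau+\ep)=[t,(t+\ep)\wedge(\tau+\ep))\subseteq[t,t+\ep]$ (and noting the integral vanishes unless $t<\tau+\ep$), apply the Cauchy--Schwarz inequality in the $ds$-variable:
\begin{equation*}
	\Bigl|\int^T_t\langle A_3(t,s),X^\pert_1(s)\rangle\1_{[\tau,\tau+\ep)}(s)\,ds\Bigr|^2\leq\ep\int^{t+\ep}_t|A_3(t,s)|^2|X^\pert_1(s)|^2\,ds\,\1_{\{t<\tau+\ep\}}.
\end{equation*}
Taking expectations and using $\sup_{s\in[\tau,T]}|X^\pert_1(s)|$ to pull the state factor out, then Cauchy--Schwarz in $\mathbb{P}$,
\begin{equation*}
	\mathbb{E}\Bigl[\ep\int^{t+\ep}_t|A_3(t,s)|^2|X^\pert_1(s)|^2\,ds\Bigr]\leq\ep\,\mathbb{E}\Bigl[\sup_{s\in[\tau,T]}|X^\pert_1(s)|^4\Bigr]^{1/2}\mathbb{E}\Bigl[\Bigl(\int^{t+\ep}_t|A_3(t,s)|^2\,ds\Bigr)^2\Bigr]^{1/2}.
\end{equation*}
By Lemma~\ref{lemma: variation state} the first factor is $O(\ep)$. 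For the second, I would expand $|A_3(t,s)|^2\leq C\bigl(|q(t,s)|^2+|P(t,s)|^2+|Q(t,s)|^2\bigr)$ (absorbing the bounded factors $\delta\sigma_x$, $\sigma_x$, $\delta\sigma$ into $C$), so that $\bigl(\int^{t+\ep}_t|A_3(t,s)|^2\,ds\bigr)^2\leq C\bigl(\int^{t+\ep}_t(|q(t,s)|^2+|P(t,s)|^2+|Q(t,s)|^2)\,ds\bigr)^2$; since $q(t,\cdot),P(t,\cdot),Q(t,\cdot)\in L^{4,2}_\mathbb{F}$ with $\mathbb{E}[(\int^T_t|\cdot|^2\,ds)^2]$ bounded uniformly in $t$ (Proposition~\ref{proposition: EBSVIE}), absolute continuity of the integral gives $\sup_{t\in[\tau,T]}\mathbb{E}\bigl[(\int^{t+\ep}_t(|q(t,s)|^2+\cdots)\,ds)^2\bigr]=o(1)$ as $\ep\downarrow0$. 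Hence the second factor is $o(1)$, and altogether the displayed bound is $\ep\cdot O(\ep^{1/2})\cdot o(1)$.

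This last estimate is slightly too weak: $\ep^{3/2}o(1)$ is not obviously $o(\ep^2)$. The fix — and the only real subtlety — is to get an extra power of $\ep$ out of the $A_3$ factor. Observe that $q(t,\cdot),P(t,\cdot),Q(t,\cdot)$ are in fact in $L^{8,2}_\mathbb{F}$ uniformly in $t$ (all moments are available since $x_{t_0}$ has moments of all orders and the free terms/generators of \eqref{EBSVIE 1},\eqref{EBSVIE 2} are Lipschitz), so I would instead write $\int^{t+\ep}_t|A_3(t,s)|^2\,ds\leq\ep^{1/2}\bigl(\int^{t+\ep}_t|A_3(t,s)|^4\,ds\bigr)^{1/2}$ by Cauchy--Schwarz, giving $\mathbb{E}\bigl[(\int^{t+\ep}_t|A_3(t,s)|^2\,ds)^2\bigr]\leq\ep\,\mathbb{E}\bigl[\int^{t+\ep}_t|A_3(t,s)|^4\,ds\bigr]$, and the latter is bounded uniformly in $t$ (even $O(\ep)$ by absolute continuity) once one knows $A_3(t,\cdot)\in L^{4}_\mathbb{F}(\tau,T)$ uniformly in $t$. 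Then the second factor becomes $O(\ep^{1/2})$ in fact $o(\ep^{1/2})$, and the product is $\ep\cdot O(\ep^{1/2})\cdot o(\ep^{1/2})=o(\ep^2)$, uniformly in $t\in[\tau,T]$. Taking the supremum over $t$ before passing to the limit is legitimate because all the constants and all the $o(\cdot)$/$O(\cdot)$ bounds above come from quantities ($\sup_t\mathbb{E}[(\int^T_t|q(t,s)|^2ds)^{\cdots}]$, etc.) that are already uniform in $t$. The main obstacle is thus purely bookkeeping: arranging the Hölder exponents so that the length-$\ep$ integration interval contributes a total factor $\ep^2$ rather than $\ep^{3/2}$; using the higher ($L^8$ or $L^4$) integrability of the adjoint solutions, guaranteed by the assumption that $x_{t_0}$ has all moments, does the job.
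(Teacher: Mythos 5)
Your overall strategy -- Cauchy--Schwarz to isolate $\sup_s|X^\pert_1(s)|$, the moment bound $\mathbb{E}[\sup_s|X^\pert_1(s)|^{2p}]=O(\ep^p)$, and the smallness of the adjoint processes over the length-$\ep$ window -- is the same as the paper's. But there are two concrete problems in the execution.

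First, a computational slip that sends you down the wrong path: Lemma~\ref{lemma: variation state} gives $\mathbb{E}[\sup_{s}|X^\pert_1(s)|^{4}]=O(\ep^2)$ (take $2p=4$), so $\mathbb{E}[\sup_{s}|X^\pert_1(s)|^{4}]^{1/2}=O(\ep)$, not $O(\ep^{1/2})$ as you claim. With the correct exponent your \emph{first} chain of estimates already closes: $\ep\cdot O(\ep)\cdot o(1)=o(\ep^2)$, and no ``fix'' is needed. (This is essentially what the paper does, except that it keeps the factor $|\delta\sigma(s)|$ inside the $ds$-integral and extracts the extra power of $\ep$ from $\mathbb{E}[(\int_\tau^{\tau+\ep}|\delta\sigma(s)|^2\,ds)^4]^{1/2}=O(\ep^2)$ rather than from an up-front Cauchy--Schwarz in $ds$; both routes are fine.)

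Second, the ``fix'' you then propose is not merely unnecessary -- it rests on a false premise. You need $\mathbb{E}[\int_t^{t+\ep}|A_3(t,s)|^4\,ds]<\infty$, i.e.\ $q(t,\cdot)$, $P(t,\cdot)$, $Q(t,\cdot)$ to be fourth-power integrable in the $ds$-variable. Proposition~\ref{proposition: EBSVIE} only puts these processes in $L^{p,2}_\mathbb{F}$, i.e.\ controls $\mathbb{E}[(\int|\cdot|^2\,ds)^{p/2}]$: high moments of the $L^2(ds)$-norm, not higher $ds$-integrability. For martingale-representation-type integrands such as $q$ and $Q$ there is no reason for $\int|q(t,s)|^4\,ds$ to be finite, so the step $\int_t^{t+\ep}|A_3|^2\,ds\le\ep^{1/2}(\int_t^{t+\ep}|A_3|^4\,ds)^{1/2}$ cannot be used. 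Two smaller points: $\delta\sigma(s)$ is not uniformly bounded (only $|\delta\sigma(s)|\le C(1+|\hat X(s)|)$, which has all moments), so it cannot be absorbed into a constant -- one more H\"older application is needed, as in the paper; and the uniformity in $t$ of your $o(1)$ term requires the continuity of $t\mapsto(q(t,\cdot),P(t,\cdot),Q(t,\cdot))$ in $L^{p,2}_\mathbb{F}$ on the shrinking interval $[\tau,\tau+\ep]$, not just per-$t$ absolute continuity of the integral; the paper makes this explicit by comparing $Q(t,\cdot)$ with $Q(\tau,\cdot)$.
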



\begin{proof}
Observe that
\begin{align*}
	&\sup_{t\in[\tau,T]}\mathbb{E}\Bigl[\Bigl|\int^T_t\langle A_3(t,s),X^\pert_1(s)\rangle\1_{[\tau,\tau+\ep)}(s)\,ds\Bigr|^2\Bigr]=\sup_{t\in[\tau,\tau+\ep]}\mathbb{E}\Bigl[\Bigl|\int^{\tau+\ep}_t\langle A_3(t,s),X^\pert_1(s)\rangle\,ds\Bigr|^2\Bigr]\displaybreak[1]\\
	&\leq\sup_{t\in[\tau,\tau+\ep]}\mathbb{E}\Bigl[\sup_{s\in[\tau,T]}|X^\pert_1(s)|^2\Bigl(\int^{\tau+\ep}_t|A_3(t,s)|\,ds\Bigr)^2\Bigr]\displaybreak[1]\\
	&\leq\underbrace{\mathbb{E}\Bigl[\sup_{s\in[\tau,T]}|X^\pert_1(s)|^4\Bigr]^{1/2}}_{=O(\ep)}\sup_{t\in[\tau,\tau+\ep]}\mathbb{E}\Bigl[\Bigl(\int^{\tau+\ep}_t|A_3(t,s)|\,ds\Bigr)^4\Bigr]^{1/2}.
\end{align*}
Concerning the term $\sup_{t\in[\tau,\tau+\ep]}\mathbb{E}\Bigl[\Bigl(\int^{\tau+\ep}_t|A_3(t,s)|\,ds\Bigr)^4\Bigr]$, we have that, for example,
\begin{align*}
	&\sup_{t\in[\tau,\tau+\ep]}\mathbb{E}\Bigl[\Bigl(\int^{\tau+\ep}_t|Q(t,s)||\delta\sigma(s)|\,ds\Bigr)^4\Bigr]\\
	&\leq\sup_{t\in[\tau,\tau+\ep]}\mathbb{E}\Bigl[\Bigl(\int^{\tau+\ep}_t|Q(t,s)|^2\,ds\Bigr)^2\Bigr(\int^{\tau+\ep}_t|\delta\sigma(s)|^2\,ds\Bigr)^2\Bigr]\displaybreak[1]\\
	&\leq\underbrace{\mathbb{E}\Bigr[\Bigr(\int^{\tau+\ep}_\tau|\delta\sigma(s)|^2\,ds\Bigr)^4\Bigr]^{1/2}}_{=O(\ep^2)}\sup_{t\in[\tau,\tau+\ep]}\mathbb{E}\Bigl[\Bigl(\int^{\tau+\ep}_t|Q(t,s)|^2\,ds\Bigr)^4\Bigr]^{1/2},
\end{align*}
and
\begin{align*}
	&\sup_{t\in[\tau,\tau+\ep]}\mathbb{E}\Bigl[\Bigl(\int^{\tau+\ep}_t|Q(t,s)|^2\,ds\Bigr)^4\Bigr]\\
	&\leq128\Biggl(\sup_{t\in[\tau,\tau+\ep]}\mathbb{E}\Bigl[\Bigl(\int^T_{t_0}|Q(t,s)-Q(\tau,s)|^2\,ds\Bigr)^4\Bigr]+\mathbb{E}\Bigl[\Bigl(\int^{\tau+\ep}_\tau|Q(\tau,s)|^2\,ds\Bigr)^4\Bigr]\Biggr)\\
	&\to0\ \text{as}\ \ep\downarrow0.
\end{align*}
Thus, we see that $\sup_{t\in[\tau,\tau+\ep]}\mathbb{E}\Bigl[\Bigl(\int^{\tau+\ep}_t|A_3(t,s)|\,ds\Bigr)^4\Bigr]=o(\ep^2)$, and finish the proof.
\end{proof}


\begin{lemm}\label{lemma: A2}
It holds that
\begin{align*}
	&\sup_{t\in[\tau,T]}\mathbb{E}\Bigl[\Bigl|\int^T_t\Bigl(\langle\tilde{\mathfrak{f}}^\pert_x(t,s),X^\pert_1(s)\mathalpha{+}X^\pert_2(s)\rangle\mathalpha{+}\tilde{\mathfrak{f}}^\pert_y(t,s)\eta^\pert(s)\mathalpha{+}\tilde{\mathfrak{f}}^\pert_z(t,s)\zeta^\pert(t,s)\Bigr)\1_{[\tau,\tau+\ep)}(s)\,ds\Bigr|^2\Bigr]\\
	&=o(\ep^2).
\end{align*}
\end{lemm}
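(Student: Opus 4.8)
The plan is to use that each coefficient $\tilde{\mathfrak{f}}^\pert_\alpha(t,s)$, $\alpha=x,y,z$, is \emph{uniformly bounded}, while the three processes it multiplies are all of size $O(\ep^{1/2})$ on a time interval of length at most $\ep$, so that each product, once integrated against $\1_{[\tau,\tau+\ep)}$, is of order $\ep\cdot\ep^{1/2}$ pointwise, hence $o(\ep^2)$ after squaring and taking expectations. First I would record that, by \eqref{cost assumption 1} with $u_1=u_2$, the map $f$ is globally Lipschitz in $(x,y,z)$ with constant $L$, so that $Df$ is bounded; since $\tilde{\mathfrak{f}}^\pert_\alpha(t,s)$ is a difference of averages of $\partial_\alpha f$, we get $|\tilde{\mathfrak{f}}^\pert_x(t,s)|,|\tilde{\mathfrak{f}}^\pert_y(t,s)|,|\tilde{\mathfrak{f}}^\pert_z(t,s)|\le 2L$ uniformly in $(t,s,\omega)$ and in $\ep$. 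Consequently the quantity in the statement is bounded by a constant times $\sup_{t\in[\tau,\tau+\ep]}\mathbb{E}\bigl[\bigl(\int^{\tau+\ep}_t\bigl(|X^\pert_1(s)+X^\pert_2(s)|+|\eta^\pert(s)|+|\zeta^\pert(t,s)|\bigr)\,ds\bigr)^2\bigr]$ (outside $[\tau,\tau+\ep]$ the indicator kills the integrand), and it suffices to show that each of the three resulting integrals has $L^2(\Omega)$-norm $o(\ep)$, uniformly in $t\in[\tau,\tau+\ep]$.

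For the first two integrals I would argue by H\"older's inequality in $s$ (gaining a factor $\ep$ from $\mathrm{Leb}([t,\tau+\ep])\le\ep$) followed by the Cauchy--Schwarz inequality in $\omega$: the integral of $|X^\pert_1(s)+X^\pert_2(s)|$ is bounded by $\ep\sup_{s}|X^\pert_1(s)+X^\pert_2(s)|$, whose $L^2(\Omega)$-norm is $O(\ep^{1/2})$ by Lemma~\ref{lemma: variation state}, giving $O(\ep^{3/2})$ overall; for $\eta^\pert$ one uses $|\eta^\pert(s)|\le|p(s,s)||X^\pert_1(s)+X^\pert_2(s)|+\tfrac12|P(s,s)||X^\pert_1(s)|^2$, pulls $\sup_s|X^\pert_1(s)+X^\pert_2(s)|$ (resp. $\sup_s|X^\pert_1(s)|^2$) out of the $s$-integral, applies H\"older in $s$ to $\int^{\tau+\ep}_\tau|p(s,s)|\,ds\le\ep^{3/4}\bigl(\int^{\tau+\ep}_\tau|p(s,s)|^4\,ds\bigr)^{1/4}$, and finally uses that $\int^{T}_{t_0}\mathbb{E}[|p(s,s)|^4]\,ds<\infty$ and $\int^{T}_{t_0}\mathbb{E}[|P(s,s)|^4]\,ds<\infty$ because $p(\cdot,\cdot),P(\cdot,\cdot)\in\mathfrak{H}^4_\mathbb{F}$ make $s\mapsto\mathbb{E}[|p(s,s)|^4]$ and $s\mapsto\mathbb{E}[|P(s,s)|^4]$ continuous (hence bounded) on $[t_0,T]$ by Lemma~\ref{lemma: measurable version} and the remark following it. This yields $o(\ep^2)$ for these two contributions.

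The delicate term is $\int^{\tau+\ep}_t|\zeta^\pert(t,s)|\,ds$: expanding $\zeta^\pert$ by \eqref{eta zeta}, the coefficients $B_1(t,s),B_2(t,s),B_3(t,s)$ split into pieces built from $\sigma_x,\sigma_{xx},p,P,\delta\sigma$ — handled exactly as above, since all of these have uniformly controlled suprema-in-$s$ moments — and pieces containing $q(t,s)$ and $Q(t,s)$, which are only square-integrable in $s$ and cannot be pulled out as suprema. For the latter I would instead estimate, e.g., $\int^{\tau+\ep}_t|q(t,s)||X^\pert_1(s)+X^\pert_2(s)|\,ds\le\ep^{1/2}\bigl(\int^{\tau+\ep}_t|q(t,s)|^2\,ds\bigr)^{1/2}\sup_s|X^\pert_1(s)+X^\pert_2(s)|$ (and similarly $Q(t,s)$ against $|X^\pert_1(s)|^2$), then apply Cauchy--Schwarz in $\omega$, reducing matters to showing $\sup_{t\in[\tau,\tau+\ep]}\mathbb{E}\bigl[\bigl(\int^{\tau+\ep}_t|q(t,s)|^2\,ds\bigr)^2\bigr]\to0$ (and the same for $Q$) as $\ep\downarrow0$. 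This is done by the same device as in the proof of Lemma~\ref{lemma: A1}: bound it by $C\sup_{t\in[\tau,\tau+\ep]}\mathbb{E}\bigl[\bigl(\int^{T}_{t_0}|q(t,s)-q(\tau,s)|^2\,ds\bigr)^2\bigr]+C\,\mathbb{E}\bigl[\bigl(\int^{\tau+\ep}_\tau|q(\tau,s)|^2\,ds\bigr)^2\bigr]$, where the first summand vanishes by continuity of $t\mapsto q(t,\cdot)\in L^{4,2}_\mathbb{F}(t_0,T;\mathbb{R}^n)$ and the second by dominated convergence. Combining the factor $O(\ep)$ from the H\"older/Cauchy--Schwarz step (including the $X^\pert$-moments) with this $o(1)$ factor produces $o(\ep^2)$. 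The main obstacle is precisely this $q(t,s)$- and $Q(t,s)$-part of $\zeta^\pert$: one must resist taking suprema in $s$ and instead invoke joint continuity in $t$ together with a uniform-integrability argument, exactly as in Lemma~\ref{lemma: A1}; everything else is a routine accounting of powers of $\ep$.
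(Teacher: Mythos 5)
Your argument is correct and is exactly the route the paper takes: its proof of this lemma consists of noting the uniform boundedness of $\tilde{\mathfrak{f}}^\pert_x,\tilde{\mathfrak{f}}^\pert_y,\tilde{\mathfrak{f}}^\pert_z$, recalling \eqref{eta zeta}, and invoking ``the same arguments as in the proof of Lemma~\ref{lemma: A1}'' --- i.e.\ precisely your device of pulling out sup-in-$s$ moments of $X^\pert_1,X^\pert_2$ where possible and, for the $q(t,s)$- and $Q(t,s)$-parts of $B_1,B_2$, combining H\"older in $s$ with the split $q(t,\cdot)=(q(t,\cdot)-q(\tau,\cdot))+q(\tau,\cdot)$ and the continuity of $t\mapsto q(t,\cdot)$. (Only the closing summary sentence undercounts the prefactor as $O(\ep)$; your displayed inequalities actually yield $O(\ep^2)\cdot o(1)=o(\ep^2)$ as required.)
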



\begin{proof}
Note that $\tilde{\mathfrak{f}}^\pert_x(\cdot,\cdot)$, $\tilde{\mathfrak{f}}^\pert_y(\cdot,\cdot)$ and $\tilde{\mathfrak{f}}^\pert_z(\cdot,\cdot)$ are uniformly bounded. Recall the definitions~\eqref{eta zeta} of $\eta^\pert(\cdot)$ and $\zeta^\pert(\cdot,\cdot)$. By the same arguments as in the proof of Lemma~\ref{lemma: A1}, we can show the assertion.
\end{proof}


\begin{lemm}\label{lemma: A3}
It holds that
\begin{equation}\label{estimate B_3}
	\sup_{t\in[\tau,T]}\mathbb{E}\Bigl[\Bigl|\int^T_tf_z(t,s)\langle B_3(t,s),X^\pert_1(s)\rangle\1_{[\tau,\tau+\ep)}(s)\,ds\Bigr|^2\Bigr]=o(\ep^2)
\end{equation}
and
\begin{equation}\label{estimate D^2}
\begin{split}
	&\sup_{t\in[\tau,T]}\mathbb{E}\Bigl[\Bigl|\int^T_t\Bigl\{\left\langle D^2\tilde{f}^\pert(t,s)\!\left(\!\!
    \begin{array}{c}
      X^\pert_1(s)\mathalpha{+}X^\pert_2(s)\\
      \eta^\pert(s)\\
      \zeta^\pert(t,s)
    \end{array}
  \!\!\right),\left(\!\!
    \begin{array}{c}
      X^\pert_1(s)\mathalpha{+}X^\pert_2(s)\\
      \eta^\pert(s)\\
      \zeta^\pert(t,s)
    \end{array}
  \!\!\right)\right\rangle\\
	&\hspace{3cm}-\langle G(t,s)X^\pert_1(s),X^\pert_1(s)\rangle\Bigr\}\,ds\Bigr|^2\Bigr]\\
	&=o(\ep^2),
\end{split}
\end{equation}
where
\begin{equation*}	
	G(t,s):=[I_{n\times n},p(s,s),\sigma^\top_x(s)p(t,s)\mathalpha{+}q(t,s)]D^2f(t,s)[I_{n\times n},p(s,s),\sigma^\top_x(s)p(t,s)\mathalpha{+}q(t,s)]^\top.
\end{equation*}
\end{lemm}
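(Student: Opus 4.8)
The plan is to treat the two estimates of the lemma separately: \eqref{estimate B_3} is a minor variant of \eqref{estimate A_3}, whereas \eqref{estimate D^2} requires genuine work. For \eqref{estimate B_3}, note that the integrand vanishes when $t>\tau+\ep$, so $\sup_{t\in[\tau,T]}=\sup_{t\in[\tau,\tau+\ep]}$ and $\int^T_t(\cdot)\1_{[\tau,\tau+\ep)}(s)\,ds=\int^{\tau+\ep}_t(\cdot)\,ds$; using $|f_z(t,s)|\le L$ and the Cauchy--Schwarz inequality one bounds the left-hand side by $\mathbb{E}\bigl[\sup_s|X^\pert_1(s)|^4\bigr]^{1/2}\sup_t\mathbb{E}\bigl[\bigl(\int^{\tau+\ep}_t|B_3(t,s)|\,ds\bigr)^4\bigr]^{1/2}$. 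The first factor is $O(\ep)$ by Lemma~\ref{lemma: variation state}; for the second, from $|B_3(t,s)|\le|\delta\sigma_x(s)|\,|p(t,s)|+|P(t,s)|\,|\delta\sigma(s)|$, the boundedness of $|\delta\sigma_x(\cdot)|$, the $L^q$-moment bounds of $\delta\sigma(\cdot)$, and the fact that $p(\cdot,\cdot),P(\cdot,\cdot)$ are \emph{continuous} processes (so $\int^{\tau+\ep}_t|p(t,s)|\,ds\le\ep\sup_s|p(t,s)|$, etc., with $\sup_t\mathbb{E}[\sup_s(|p(t,s)|+|P(t,s)|)^q]<\infty$), one gets the second factor $=O(\ep^2)$. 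Hence \eqref{estimate B_3} is of order $O(\ep^3)=o(\ep^2)$. In contrast to the $Q(\cdot,\cdot)$-term of $A_3(\cdot,\cdot)$ in Lemma~\ref{lemma: A1}, no continuity of $t\mapsto B_3(t,\cdot)$ is needed here, since $B_3$ involves only the continuous adjoint processes $p$ and $P$.

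For \eqref{estimate D^2} the starting point is the algebraic identity
\begin{equation*}
	\langle G(t,s)X^\pert_1(s),X^\pert_1(s)\rangle=\bigl\langle D^2f(t,s)\,\Xi(t,s),\Xi(t,s)\bigr\rangle,\quad
	\Xi(t,s):=\begin{pmatrix}X^\pert_1(s)\\ \langle p(s,s),X^\pert_1(s)\rangle\\ \langle\sigma^\top_x(s)p(t,s)+q(t,s),X^\pert_1(s)\rangle\end{pmatrix},
\end{equation*}
obtained by reading off $\Xi(t,s)=[I_{n\times n},p(s,s),\sigma^\top_x(s)p(t,s)+q(t,s)]^\top X^\pert_1(s)$ from the definition of $G$. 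Setting $\xi(t,s):=(X^\pert_1(s)+X^\pert_2(s),\eta^\pert(s),\zeta^\pert(t,s))^\top$ and using the elementary identity (valid since $D^2f$ is symmetric)
\begin{equation*}
	\langle D^2\tilde{f}^\pert(t,s)\xi,\xi\rangle-\langle D^2f(t,s)\Xi,\Xi\rangle=\bigl\langle(D^2\tilde{f}^\pert(t,s)-D^2f(t,s))\xi,\xi\bigr\rangle+\bigl\langle D^2f(t,s)(\xi+\Xi),\xi-\Xi\bigr\rangle=:\mathrm{(I)}+\mathrm{(II)},
\end{equation*}
I would estimate the $\mathrm{Leb}_{[\tau,T]}\otimes\mathbb{P}$-integral of each, squared. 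For $\mathrm{(II)}$, $|D^2f|\le L$, and the three blocks of $\xi-\Xi$ are $X^\pert_2(s)$, $\langle p(s,s),X^\pert_2(s)\rangle+\tfrac12\langle P(s,s)X^\pert_1(s),X^\pert_1(s)\rangle$, and $\langle B_1(t,s),X^\pert_2(s)\rangle+\tfrac12\langle B_2(t,s)X^\pert_1(s),X^\pert_1(s)\rangle+\langle B_3(t,s),X^\pert_1(s)\rangle\1_{[\tau,\tau+\ep)}(s)$; each satisfies $\sup_t\mathbb{E}[(\int^T_\tau|\cdot|^2\,ds)^2]=O(\ep^4)$ by Lemma~\ref{lemma: variation state} and the uniform moment bounds of $p,q,P,Q$ (the $B_3$-block being handled exactly as for \eqref{estimate B_3}), and likewise $\sup_t\mathbb{E}[(\int^T_\tau|\xi+\Xi|^2\,ds)^2]=O(\ep^4)$; Cauchy--Schwarz then gives $\sup_t\mathbb{E}[(\int^T_t|\mathrm{(II)}|\,ds)^2]=O(\ep^4)=o(\ep^2)$.

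For $\mathrm{(I)}$ I would use the pointwise bound $|D^2\tilde{f}^\pert(t,s)-D^2f(t,s)|\le\rho(|X^\pert_1(s)+X^\pert_2(s)|+|\eta^\pert(s)|+|\zeta^\pert(t,s)|)$, which is bounded by $2L$ and tends to $0$ in $L^0_\mathbb{F}(\tau,T;\mathbb{R})$ uniformly in $t$ as $\ep\downarrow0$ (since $X^\pert_1,X^\pert_2\to0$ and, using $\sup_t\mathbb{E}[(\int^T_\tau|B_1(t,s)|^2\,ds)^q]<\infty$, also $\sup_t\mathbb{E}[\int^T_\tau|\zeta^\pert(t,s)|^2\,ds]=O(\ep^2)$). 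Splitting $|\xi|^2\le2|\Xi|^2+2|\xi-\Xi|^2$, the $|\xi-\Xi|^2$-part contributes $O(\ep^4)$ as above, while for the $|\Xi|^2$-part I would write $|\Xi(t,s)|^2\le\Psi(t,s)|X^\pert_1(s)|^2$ with $\Psi(t,s):=C(1+|p(s,s)|^2+|p(t,s)|^2+|q(t,s)|^2)$, whose integral $\int^T_\tau\Psi(t,s)\,ds$ has finite moments of every order uniformly in $t$, and bound that part by $\mathbb{E}[\sup_s|X^\pert_1(s)|^8]^{1/2}\sup_t\mathbb{E}[(\int^T_\tau|D^2\tilde{f}^\pert(t,s)-D^2f(t,s)|\Psi(t,s)\,ds)^4]^{1/2}$; the first factor is $O(\ep^2)$ and the second is $o(1)$ by a uniform-integrability argument (truncate $\Psi$ at level $N$; for fixed $N$ the truncated contribution vanishes as $\ep\downarrow0$ by convergence in measure of $D^2\tilde{f}^\pert-D^2f$, and the $\{\Psi>N\}$-tail is uniformly small by the higher-moment bounds). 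Combining $\mathrm{(I)}$ and $\mathrm{(II)}$ yields \eqref{estimate D^2}. The main obstacle is precisely this last step: $D^2\tilde{f}^\pert-D^2f$ carries \emph{no decay rate in $\ep$}, only a bounded modulus of continuity vanishing in measure, and it multiplies factors (through $\zeta^\pert$, $B_2$, $B_3$) involving the merely $L^{p,2}$-integrable processes $q,Q$ rather than uniformly bounded or pathwise-continuous ones; thus $\sup_s$ is unavailable and the entire argument has to be organized around $\int^T_\tau|\cdot|^2\,ds$-type norms, supplemented by a careful uniform-in-$t$ dominated-convergence argument that supplies the required $o(\ep^2)$.
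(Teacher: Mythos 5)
Your proof is correct and, at its core, follows the same strategy as the paper's: both rest on the identity $\langle G(t,s)X^\pert_1(s),X^\pert_1(s)\rangle=\langle D^2f(t,s)\Xi(t,s),\Xi(t,s)\rangle$ with $\Xi=\mathcal{X}^\pert_1$, both reduce everything to a single hard term of the form $(D^2\tilde{f}^\pert-D^2f)$ tested against $|\mathcal{X}^\pert_1|^2$, and both dispatch that term by combining uniform boundedness, convergence to zero in measure uniformly in $t$, and a uniform-integrability device. The differences are organizational but worth noting. The paper writes $\xi=\mathcal{X}^\pert_1+\mathcal{X}^\pert_2$ and expands the quadratic form into cross terms, whereas you use the polarization identity to peel off $\langle D^2f(\xi+\Xi),\xi-\Xi\rangle$ first; and for the hard term the paper splits on the level set $\{|D^2\tilde{f}^\pert-D^2f|\geq\sqrt{\gamma}\}$, obtaining the required uniformity in $t$ from the uniform continuity of $t\mapsto B_1(t,\cdot)$ in $L^{8,2}_\mathbb{F}$ via a finite partition of $[\tau,T]$, while you truncate the weight $\Psi$ at level $N$, where the tail bound $N^{-4}\sup_t\mathbb{E}\bigl[\bigl(\int^T_\tau\Psi(t,s)^2\,ds\bigr)^4\bigr]$ is automatically uniform in $t$ — arguably a slightly cleaner route to the uniformity. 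Your observation that \eqref{estimate B_3} is easier than \eqref{estimate A_3} because $B_3$ involves only the pathwise-continuous adjoint processes $p,P$ (so no continuity of $t\mapsto B_3(t,\cdot)$ is needed) is correct; the paper simply refers to the argument of Lemma~\ref{lemma: A1}. Two harmless misstatements of orders: $\sup_{t}\mathbb{E}\bigl[\bigl(\int^T_\tau|\xi+\Xi|^2\,ds\bigr)^2\bigr]$ is $O(\ep^2)$, not $O(\ep^4)$, since the leading block of $\xi+\Xi$ is $2X^\pert_1$ — Cauchy--Schwarz then gives $\mathrm{(II)}=O(\ep^3)=o(\ep^2)$, so nothing breaks; and $\sup_t\mathbb{E}\bigl[\int^T_\tau|\zeta^\pert(t,s)|^2\,ds\bigr]$ is $O(\ep)$, not $O(\ep^2)$, which is still more than enough for the uniform convergence in measure of $D^2\tilde{f}^\pert-D^2f$ that you actually need.
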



\begin{proof}
The estimate~\eqref{estimate B_3} can be proved by the same arguments as in the proof of Lemma~\ref{lemma: A1}. We prove the estimate~\eqref{estimate D^2}. By the definitions~\eqref{eta zeta} of $\eta^\pert(\cdot)$ and $\zeta^\pert(\cdot,\cdot)$, we have
\begin{equation*}
	\left(\!\!
    \begin{array}{c}
      X^\pert_1(s)\mathalpha{+}X^\pert_2(s)\\
      \eta^\pert(s)\\
      \zeta^\pert(t,s)
    \end{array}
  \!\!\right)
	=\mathcal{X}^\pert_1(t,s)+\mathcal{X}^\pert_2(t,s),
\end{equation*}
where
\begin{equation*}
	\mathcal{X}^\pert_1(t,s):=
	\left(\!\!
    \begin{array}{c}
      X^\pert_1(s)\\
      \langle p(s,s),X^\pert_1(s)\rangle\\
      \langle B_1(t,s),X^\pert_1(s)\rangle
    \end{array}
  \!\!\right)
\end{equation*}
and
\begin{align*}
	&\mathcal{X}^\pert_2(t,s)\\
	&:=
	\left(\!\!
    \begin{array}{c}
      X^\pert_2(s)\\
      \langle p(s,s),X^\pert_2(s)\rangle+\frac{1}{2}\langle P(s,s)X^\pert_1(s),X^\pert_1(s)\rangle\\
      \langle B_1(t,s),X^\pert_2(s)\rangle+\frac{1}{2}\langle B_2(t,s)X^\pert_1(s),X^\pert_1(s)\rangle+\langle B_3(t,s),X^\pert_1(s)\rangle\1_{[\tau,\tau+\ep)}(s)
    \end{array}
  \!\!\right).
\end{align*}
Thus, we obtain
\begin{align*}
	&\left\langle D^2\tilde{f}^\pert(t,s)\!\left(\!\!
    \begin{array}{c}
      X^\pert_1(s)\mathalpha{+}X^\pert_2(s)\\
      \eta^\pert(s)\\
      \zeta^\pert(t,s)
    \end{array}
  \!\!\right),\left(\!\!
    \begin{array}{c}
      X^\pert_1(s)\mathalpha{+}X^\pert_2(s)\\
      \eta^\pert(s)\\
      \zeta^\pert(t,s)
    \end{array}
  \!\!\right)\right\rangle\\
	&=\bigl\langle D^2\tilde{f}^\pert(t,s)\mathcal{X}^\pert_1(t,s),\mathcal{X}^\pert_1(t,s)\bigr\rangle+2\bigl\langle D^2\tilde{f}^\pert(t,s)\mathcal{X}^\pert_1(t,s),\mathcal{X}^\pert_2(t,s)\bigr\rangle\\
	&\hspace{1cm}+\bigl\langle D^2\tilde{f}^\pert(t,s)\mathcal{X}^\pert_2(t,s),\mathcal{X}^\pert_2(t,s)\bigr\rangle.
\end{align*}
By Lemma~\ref{lemma: variation state}, we see that
\begin{equation}\label{X_1 calculation}
\begin{split}
	&\sup_{t\in[\tau,T]}\mathbb{E}\Bigl[\Bigl(\int^T_t\bigl|\mathcal{X}^\pert_1(t,s)\bigr|^2\,ds\Bigr)^2\Bigr]\\
	&\leq\sup_{t\in[\tau,T]}\mathbb{E}\Bigl[\Bigl(\int^T_t\bigl(1+|p(s,s)|^2+|B_1(t,s)|^2\bigr)\bigl|X^\pert_1(s)\bigr|^2\,ds\Bigr)^2\Bigr]\\
	&\leq\sup_{t\in[\tau,T]}\mathbb{E}\Bigl[\sup_{s\in[\tau,T]}\bigl|X^\pert_1(s)\bigr|^4\Bigl(\int^T_t\bigl(1+|p(s,s)|^2+|B_1(t,s)|^2\bigr)\,ds\Bigr)^2\Bigr]\\
	&\leq\underbrace{\mathbb{E}\Bigl[\sup_{s\in[\tau,T]}\bigl|X^\pert_1(s)\bigr|^8\Bigr]^{1/2}}_{=O(\ep^2)}\underbrace{\sup_{t\in[\tau,T]}\mathbb{E}\Bigl[\Bigl(\int^T_t\bigl(1+|p(s,s)|^2+|B_1(t,s)|^2\bigr)\,ds\Bigr)^4\Bigr]^{1/2}}_{<\infty},
\end{split}
\end{equation}
and hence
\begin{equation}\label{estimate X_1}
	\sup_{t\in[\tau,T]}\mathbb{E}\Bigl[\Bigl(\int^T_t\bigl|\mathcal{X}^\pert_1(t,s)\bigr|^2\,ds\Bigr)^2\Bigr]=O(\ep^2).
\end{equation}
Similarly we can show that
\begin{equation}\label{estimate X_2}
	\sup_{t\in[\tau,T]}\mathbb{E}\Bigl[\Bigl(\int^T_t\bigl|\mathcal{X}^\pert_2(t,s)\bigr|^2\,ds\Bigr)^2\Bigr]=o(\ep^2).
\end{equation}
Note that $D^2\tilde{f}^\pert(\cdot,\cdot)$ is uniformly bounded. Therefore, by the estimates~\eqref{estimate X_1} and \eqref{estimate X_2}, we obtain
\begin{align*}
	&\sup_{t\in[\tau,T]}\mathbb{E}\Bigl[\Bigl|\int^T_t\bigl\langle D^2\tilde{f}^\pert(t,s)\mathcal{X}^\pert_1(t,s),\mathcal{X}^\pert_2(t,s)\bigr\rangle\,ds\Bigr|^2\Bigr]=o(\ep^2)
\shortintertext{and}
	&\sup_{t\in[\tau,T]}\mathbb{E}\Bigl[\Bigl|\int^T_t\bigl\langle D^2\tilde{f}^\pert(t,s)\mathcal{X}^\pert_2(t,s),\mathcal{X}^\pert_2(t,s)\bigr\rangle\,ds\Bigr|^2\Bigr]=o(\ep^2).
\end{align*}
On the other hand, a simple calculation shows that
\begin{equation*}
	\bigl\langle G(t,s)X^\pert_1(s),X^\pert_1(s)\bigr\rangle=\bigl\langle D^2f(t,s)\mathcal{X}^\pert_1(t,s),\mathcal{X}^\pert_1(t,s)\bigr\rangle.
\end{equation*}
Thus, it remains to show that
\begin{equation}\label{estimate D^2 X_1}
	\sup_{t\in[\tau,T]}\mathbb{E}\Bigl[\Bigl|\int^T_t\bigl\langle \bigl(D^2\tilde{f}^\pert(t,s)-D^2f(t,s)\bigr)\mathcal{X}^\pert_1(t,s),\mathcal{X}^\pert_1(t,s)\bigr\rangle\,ds\Bigr|^2\Bigr]=o(\ep^2).
\end{equation}
Now we prove \eqref{estimate D^2 X_1}. First of all, by the same calculations as in \eqref{X_1 calculation}, we have, for any $\mathcal{A}\in\mathcal{B}([\tau,T])\otimes\mathcal{F}_T$,
\begin{align*}
	&\sup_{t\in[\tau,T]}\mathbb{E}\Bigl[\Bigl(\int^T_t\bigl|\mathcal{X}^\pert_1(t,s)\bigr|^2\1_\mathcal{A}(s,\omega)\,ds\Bigr)^2\Bigr]\\
	&\leq\mathbb{E}\Bigl[\sup_{s\in[\tau,T]}\bigl|X^\pert_1(s)\bigr|^{8}\Bigr]^{1/2}\sup_{t\in[\tau,T]}\mathbb{E}\Bigl[\Bigl(\int^T_t\bigl(1+|p(s,s)|^2+|B_1(t,s)|^2\bigr)\1_\mathcal{A}(s,\omega)\,ds\Bigr)^{4}\Bigr]^{1/2}.
\end{align*}
For each $\kappa>0$, define $\mathfrak{A}(\kappa):=\{\mathcal{A}\in\mathcal{B}([\tau,T])\otimes\mathcal{F}_T\,|\,\mathrm{Leb}_{[\tau,T]}\otimes\mathbb{P}(\mathcal{A})\leq\kappa\}$. Fix an arbitrary $\gamma>0$. Then, for each $t\in[\tau,T]$, there exists a constant $\kappa_t=\kappa_t(\gamma)>0$ such that
\begin{equation*}
	\mathbb{E}\Bigl[\Bigl(\int^T_\tau\bigl(1+|p(s,s)|^2+|B_1(t,s)|^2\bigr)\1_\mathcal{A}(s,\omega)\,ds\Bigr)^{4}\Bigr]\leq\frac{\gamma^2}{256},\ \forall\,\mathcal{A}\in\mathfrak{A}(\kappa_t).
\end{equation*}
Besides, since the map $[\tau,T]\ni t\mapsto B_1(t,\cdot)\in L^{8,2}_\mathbb{F}(\tau,T;\mathbb{R}^n)$ is (uniformly) continuous, there exists a partition $\tau=t_0<t_1<\dots<t_N=T$ of $[\tau,T]$ such that
\begin{equation*}
	\mathbb{E}\Bigl[\Bigl(\int^T_\tau|B_1(t,s)-B_1(t_n,s)|^2\,ds\Bigr)^{4}\Bigr]\leq\frac{\gamma^2}{256},\ \forall\,t\in[t_{n-1},t_n],\ n=1,\dots,N.
\end{equation*}
Define $\kappa=\kappa(\gamma):=\min\{\kappa_{t_1},\dots,\kappa_{t_N}\}$. Then, for any $\mathcal{A}\in\mathfrak{A}(\kappa)$ and any $t\in[t_{n-1},t_n]$ with $n=1,\dots,N$, it holds that
\begin{align*}
	&\mathbb{E}\Bigl[\Bigl(\int^T_t\bigl(1+|p(s,s)|^2+|B_1(t,s)|^2\bigr)\1_\mathcal{A}(s,\omega)\,ds\Bigr)^{4}\Bigr]\\
	&\leq128\Bigl\{\mathbb{E}\Bigl[\Bigl(\int^T_\tau\bigl(1+|p(s,s)|^2+|B_1(t_n,s)|^2\bigr)\1_\mathcal{A}(s,\omega)\,ds\Bigr)^{4}\Bigr]\\
	&\hspace{4cm}+\mathbb{E}\Bigl[\Bigl(\int^T_\tau|B_1(t,s)-B_1(t_n,s)|^2\,ds\Bigr)^{4}\Bigr]\Bigr\}\\
	&\leq\gamma^2.
\end{align*}
Thus, we obtain
\begin{equation*}
	\sup_{t\in[\tau,T]}\mathbb{E}\Bigl[\Bigl(\int^T_t\bigl|\mathcal{X}^\pert_1(t,s)\bigr|^2\1_\mathcal{A}(s,\omega)\,ds\Bigr)^2\Bigr]\leq\mathbb{E}\Bigl[\sup_{s\in[\tau,T]}\bigl|X^\pert_1(s)\bigr|^{8}\Bigr]^{1/2}\gamma,\ \forall\,\mathcal{A}\in\mathfrak{A}(\kappa).
\end{equation*}
On the other hand, by Assumption~\ref{assumption: cost BSVIE}~(\rnum{2}), there exists a modulus of continuity $\rho:[0,\infty)\to[0,\infty)$ such that
\begin{equation*}
	|D^2\tilde{f}^\pert(t,s)-D^2f(t,s)|\leq\rho\bigl(|X^\pert_1(s)+X^\pert_2(s)|+|\eta^\pert(s)|+|\zeta^\pert(t,s)|\bigr).
\end{equation*}
Furthermore, by using Lemma~\ref{lemma: variation state}, we can easily show that
\begin{equation*}
	\lim_{\ep\downarrow0}\sup_{t\in[\tau,T]}\mathbb{E}\Bigl[\int^T_\tau\bigl(|X^\pert_1(s)+X^\pert_2(s)|+|\eta^\pert(s)|+|\zeta^\pert(t,s)|\bigr)\,ds\Bigr]=0.
\end{equation*}
Hence, there exists a constant $\ep_0=\ep_0(\gamma)>0$ such that, for any $0<\ep<\ep_0$, we have
\begin{equation*}
	\{(s,\omega)\in[\tau,T]\times\Omega\,|\,|D^2\tilde{f}^\pert(t,s)-D^2f(t,s)|\geq\sqrt{\gamma}\}\in\mathfrak{A}(\kappa),\ \forall\,t\in[\tau,T].
\end{equation*}
Therefore, for any $0<\ep<\ep_0$, it holds that
\begin{align*}
	&\sup_{t\in[\tau,T]}\mathbb{E}\Bigl[\Bigl|\int^T_t\bigl\langle \bigl(D^2\tilde{f}^\pert(t,s)-D^2f(t,s)\bigr)\mathcal{X}^\pert_1(t,s),\mathcal{X}^\pert_1(t,s)\bigr\rangle\,ds\Bigr|^2\Bigr]\\
	&\leq 2\sup_{t\in[\tau,T]}\mathbb{E}\Bigl[\Bigl(\int^T_t\bigl|D^2\tilde{f}^\pert(t,s)-D^2f(t,s)\bigr|\bigl|\mathcal{X}^\pert_1(t,s)\bigr|^2\1_{\{|D^2\tilde{f}^\pert(t,s)-D^2f(t,s)|<\sqrt{\gamma}\}}\,ds\Bigr)^2\Bigr]\\
	&\hspace{1cm}+2\sup_{t\in[\tau,T]}\mathbb{E}\Bigl[\Bigl(\int^T_t\bigl|D^2\tilde{f}^\pert(t,s)-D^2f(t,s)\bigr|\bigl|\mathcal{X}^\pert_1(t,s)\bigr|^2\1_{\{|D^2\tilde{f}^\pert(t,s)-D^2f(t,s)|\geq\sqrt{\gamma}\}}\,ds\Bigr)^2\Bigr]\displaybreak[1]\\
	&\leq 2\underbrace{\sup_{t\in[\tau,T]}\mathbb{E}\Bigl[\Bigl(\int^T_t\bigl|\mathcal{X}^\pert_1(t,s)\bigr|^2\,ds\Bigr)^2\Bigr]}_{=O(\ep^2)}\gamma+8\|D^2f\|^2_\infty\underbrace{\mathbb{E}\Bigl[\sup_{s\in[\tau,T]}\bigl|X^\pert_1(s)\bigr|^{8}\Bigr]^{1/2}}_{=O(\ep^2)}\gamma\\
	&\leq C\ep^2\gamma,
\end{align*}
where $C>0$ is a constant which is independent of $\ep>0$ and $\gamma>0$. Since $\gamma>0$ is arbitrary, this implies \eqref{estimate D^2 X_1}. Hence, we obtain \eqref{estimate D^2}.
\end{proof}


\begin{lemm}\label{lemma: A4}
It holds that
\begin{equation}\label{lemestimate 1}
	\sup_{t\in[\tau,\tau+\ep]}\mathbb{E}\Bigl[|\bar{Y}^\pert(t)-\check{Y}^\pert(t)|^2+\int^{\tau+\ep}_t|\bar{Z}^\pert(t,s)-\check{Z}^\pert(t,s)|^2\,ds\Bigr]=o(\ep^2).
\end{equation}
\end{lemm}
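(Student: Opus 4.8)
The plan is to realize $(\bar{Y}^\pert-\check{Y}^\pert,\bar{Z}^\pert-\check{Z}^\pert)$ as the solution of a linear BSVIE on the short interval $[\tau,\tau+\ep]$ and to run the a priori and stability estimates \eqref{BSVIE estimate}--\eqref{BSVIE stability} of Corollary~\ref{corollary: BSVIE} there. Writing
\[
	\Phi(t,s):=\alpha(t,s)+f\bigl(t,s,v(s),\hat{X}(s),\hat{Y}(s),\hat{Z}(t,s)+\beta(t,s)\bigr)-f(t,s),
\]
$(\bar{Y}^\pert,\bar{Z}^\pert)$ solves on $[\tau,\tau+\ep]$ the BSVIE \eqref{bar Y BSVIE} with free term $\psi^\pert_3(\cdot)$, bounded zero-order coefficients $\tilde{f}^\pert_y+\tilde{\mathfrak{f}}^\pert_y$ and $\tilde{f}^\pert_z+\tilde{\mathfrak{f}}^\pert_z$, and inhomogeneous part $\Phi(t,s)$, whereas $(\check{Y}^\pert,\check{Z}^\pert)$ solves the same BSVIE with zero free term and without the zero-order coefficients. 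Applying the stability estimate \eqref{BSVIE stability} on $[\tau,\tau+\ep]$, with the solution of \eqref{bar Y BSVIE} substituted into the generator difference, bounds the left-hand side of \eqref{lemestimate 1} by a constant times
\[
	\sup_{t\in[\tau,\tau+\ep]}\mathbb{E}\Bigl[|\psi^\pert_3(t)|^2+\Bigl(\int^{\tau+\ep}_t\bigl(|\bar{Y}^\pert(s)|+|\bar{Z}^\pert(t,s)|\bigr)\,ds\Bigr)^2\Bigr].
\]
Since $\sup_{t}\mathbb{E}[|\psi^\pert_3(t)|^2]=o(\ep^2)$ is already known, and Cauchy--Schwarz over an interval of length $\le\ep$ dominates the second term by $C\ep\sup_{t}\mathbb{E}[\int^{\tau+\ep}_t(|\bar{Y}^\pert(s)|^2+|\bar{Z}^\pert(t,s)|^2)\,ds]$, everything reduces to the single a priori bound $\sup_{t\in[\tau,\tau+\ep]}\mathbb{E}[|\bar{Y}^\pert(t)|^2+\int^{\tau+\ep}_t|\bar{Z}^\pert(t,s)|^2\,ds]=o(\ep)$: once this is in hand, $\sup_t\mathbb{E}[\int^{\tau+\ep}_t|\bar{Y}^\pert(s)|^2\,ds]\le\ep\cdot o(\ep)=o(\ep^2)$ and the remaining $\bar{Z}^\pert$-integral is $o(\ep)$, so the whole expression is $o(\ep^2)$.

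To obtain this a priori bound I would apply \eqref{BSVIE estimate} on $[\tau,\tau+\ep]$ directly to \eqref{bar Y BSVIE}: the free term contributes $o(\ep^2)$ and the zero-order term is $\Phi(t,s)$, so it remains only to show $\sup_{t\in[\tau,\tau+\ep]}\mathbb{E}[(\int^{\tau+\ep}_t|\Phi(t,s)|\,ds)^2]=o(\ep)$. Using the linear growth $|f(t,s,u,x,y,z)|\le L(1+|x|+|y|+|z|)$ one gets the pointwise bound $|\Phi(t,s)|\le C\bigl(|\delta b(s)|+|\delta\sigma(s)|+|\delta\sigma(s)|^2\bigr)\bigl(1+|p(t,s)|+|P(t,s)|\bigr)+C\bigl(1+|\hat{X}(s)|+|\hat{Y}(s)|\bigr)+|q(t,s)||\delta\sigma(s)|+C|\hat{Z}(t,s)|$. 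All summands not involving $q$ or $\hat{Z}$ are controlled by H\"older's inequality, the $L^r(\Omega)$-bounds on $p(\cdot,\cdot),P(\cdot,\cdot),\hat{X},\hat{Y}$, and the fact that $\int^{\tau+\ep}_\tau|\delta b(s)|^k\,ds$ and $\int^{\tau+\ep}_\tau|\delta\sigma(s)|^k\,ds$ are $O(\ep)$ in every $L^r(\Omega)$ (here $x_{t_0}\in L^8$ supplies the moments needed for the $|\delta\sigma|^2$-term); after squaring and integrating, each such contribution is $O(\ep^2)=o(\ep)$.

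The main obstacle is the pair of terms $|q(t,s)||\delta\sigma(s)|$ and $|\hat{Z}(t,s)|$: since $q(t,\cdot)$ and $\hat{Z}(t,\cdot)$ are only square-integrable, the naive bound $(\int^{\tau+\ep}_t|q(t,s)|^2\,ds)^{1/2}(\int^{\tau+\ep}_\tau|\delta\sigma(s)|^2\,ds)^{1/2}$ is only $O(\ep)$, not $o(\ep)$. To squeeze out the extra decay I would insert the diagonal decomposition of Proposition~\ref{proposition: EBSVIE}, writing $q(t,s)=(q(t,s)-\mathrm{Diag}[q](s))+\mathrm{Diag}[q](s)$: by \eqref{q-diagonal estimate} with $q'=2$, $p'=4$ the remainder satisfies $\mathbb{E}[(\int^{\tau+\ep}_t|q(t,s)-\mathrm{Diag}[q](s)|^2\,ds)^2]^{1/2}=o(\ep^2)$ (using $[t,\tau+\ep]\subseteq[t,t+\ep]$), which, paired with $\mathbb{E}[(\int^{\tau+\ep}_\tau|\delta\sigma(s)|^2\,ds)^2]^{1/2}=O(\ep)$, gives $o(\ep^3)$; and for the diagonal part $\mathbb{E}[(\int^{\tau+\ep}_\tau|\mathrm{Diag}[q](s)|^2\,ds)^2]\to0$ by dominated convergence (it is dominated by $(\int^T_{t_0}|\mathrm{Diag}[q]|^2\,ds)^2\in L^1$), so pairing with the same $O(\ep)$ yields $o(\ep)$. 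The term $|\hat{Z}(t,s)|$ is handled identically via $\hat{Z}(t,s)=(\hat{Z}(t,s)-\mathrm{Diag}[\hat{Z}](s))+\mathrm{Diag}[\hat{Z}](s)$, using the analogue of \eqref{q-diagonal estimate} for $\hat{Z}$ guaranteed by Theorem~\ref{theorem: EBSVIE derivative} and Lemma~\ref{lemma: Z diagonal}. Summing over all pieces gives $\sup_{t}\mathbb{E}[(\int^{\tau+\ep}_t|\Phi(t,s)|\,ds)^2]=o(\ep)$, and unwinding the reductions yields \eqref{lemestimate 1}. One should also note that the $o(\cdot)$'s coming from \eqref{q-diagonal estimate} are uniform in $t\in[\tau,\tau+\ep]$, which is visible from the proof of Lemma~\ref{lemma: Z diagonal}, whose bound involves only a uniform bound on $\partial_tq$ (resp.\ $\partial_t\hat{Z}$) and a uniform modulus of continuity; hence all the estimates above are indeed uniform over $t$.
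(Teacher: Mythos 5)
Your proof is correct and follows the same two-step skeleton as the paper: first establish the a priori bound $\sup_{t\in[\tau,\tau+\ep]}\mathbb{E}\bigl[|\bar{Y}^\pert(t)|^2+\int^{\tau+\ep}_t|\bar{Z}^\pert(t,s)|^2\,ds\bigr]=o(\ep)$ via \eqref{BSVIE estimate}, then feed it into the stability estimate \eqref{BSVIE stability}, where the boundedness of $\tilde{f}^\pert_y+\tilde{\mathfrak{f}}^\pert_y$ and $\tilde{f}^\pert_z+\tilde{\mathfrak{f}}^\pert_z$ together with Cauchy--Schwarz on the length-$\ep$ interval upgrades $o(\ep)$ to $o(\ep^2)$ (resp.\ $o(\ep^3)$ for the $\bar{Y}^\pert$-term); this second step is identical to the paper's. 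The only genuine divergence is in how you control the inhomogeneous term $\Phi(t,s)$. The paper does not invoke the diagonal processes at all here: it freezes the $t$-parameter at $\tau$ using the $L^{p,2}$-continuity of $t\mapsto\hat{Z}(t,\cdot)$ and $t\mapsto p(t,\cdot)$ and the modulus of continuity of $f$ in $t$, and then kills the remaining term $\mathbb{E}\bigl[\int^{\tau+\ep}_\tau|f(\tau,s,\dots)|^2\,ds\bigr]$ by absolute continuity of the integral of a fixed integrable function (and treats $\alpha(t,s)$ by the splitting $q(t,s)=\bigl(q(t,s)-q(\tau,s)\bigr)+q(\tau,s)$ as in Lemma~\ref{lemma: A1}). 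Your route through $\mathrm{Diag}[q]$ and \eqref{q-diagonal estimate} also works --- the remainder contributes $o(\ep^2)\cdot O(\ep)$ and the diagonal part $o(1)\cdot O(\ep)$ by dominated convergence --- but it is heavier machinery than the lemma requires, since only an $o(\ep)$ bound is needed at this stage and plain continuity in $t$ plus absolute continuity of the integral already delivers it. Your closing remark on uniformity in $t$ is the one point to tighten: \eqref{q-diagonal estimate} is stated pointwise in $t$, and the uniformity over $t\in[\tau,\tau+\ep]$ should be justified explicitly, e.g.\ by bounding $\sup_{t\in[\tau,\tau+\ep]}\mathbb{E}\bigl[\bigl(\int^{t+\ep}_t|\partial_tq(t,s)|^2\,ds\bigr)^{p/2}\bigr]$ through $\|\partial_tq(t,\cdot)-\partial_tq(\tau,\cdot)\|+\mathbb{E}\bigl[\bigl(\int^{\tau+2\ep}_\tau|\partial_tq(\tau,s)|^2\,ds\bigr)^{p/2}\bigr]^{2/p}$, both of which vanish as $\ep\downarrow0$; this is a genuine (if small) gap in the write-up rather than in the idea.
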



\begin{proof}
Firstly we prove the following estimate:
\begin{equation}\label{lemestimate 0}
	\sup_{t\in[\tau,\tau+\ep]}\mathbb{E}\Bigl[|\bar{Y}^\pert(t)|^2+\int^{\tau+\ep}_t|\bar{Z}^\pert(t,s)|^2\,ds\Bigr]=o(\ep).
\end{equation}
Recall that $(\bar{Y}^\pert(\cdot),\bar{Z}^\pert(\cdot,\cdot))$ satisfies BSVIE~\eqref{bar Y BSVIE}. Thus, by the standard estimate~\eqref{BSVIE estimate} of the solution to the BSVIE, we have
\begin{align*}
	&\sup_{t\in[\tau,\tau+\ep]}\mathbb{E}\Bigl[|\bar{Y}^\pert(t)|^2+\int^{\tau+\ep}_t|\bar{Z}^\pert(t,s)|^2\,ds\Bigr]\\
	&\leq C\sup_{t\in[\tau,\tau+\ep]}\mathbb{E}\Bigl[|\psi^\pert_3(t)|^2+\Bigl(\int^{\tau+\ep}_t|\alpha(t,s)|\,ds\Bigr)^2\\
	&\hspace{1cm}+\Bigl(\int^{\tau+\ep}_t\bigl|f\bigl(t,s,v(s),\hat{X}(s),\hat{Y}(s),\hat{Z}(t,s)\mathalpha{+}\beta(t,s)\bigr)\bigr|\,ds\Bigr)^2+\Bigl(\int^{\tau+\ep}_t|f(t,s)|\,ds\Bigr)^2\Bigr]
\end{align*}
for some constant $C>0$ which is independent of $\tau$ and $\ep$. By the same arguments as in the proof of Lemma~\ref{lemma: A1}, we can show that $\sup_{t\in[\tau,\tau+\ep]}\mathbb{E}\Bigl[\Bigl(\int^{\tau+\ep}_t|\alpha(t,s)|\,ds\Bigr)^2\Bigr]=o(\ep)$. Furthermore, observe that
\begin{align*}
	&\sup_{t\in[\tau,\tau+\ep]}\mathbb{E}\Bigl[\Bigl(\int^{\tau+\ep}_t\bigl|f\bigl(t,s,v(s),\hat{X}(s),\hat{Y}(s),\hat{Z}(t,s)\mathalpha{+}\beta(t,s)\bigr)\bigr|\,ds\Bigr)^2\Bigr]\\
	&\leq C\Bigl\{\sup_{t\in[\tau,\tau+\ep]}\mathbb{E}\Bigl[\Bigl(\int^{\tau+\ep}_t\bigl|f\bigl(t,s,v(s),\hat{X}(s),\hat{Y}(s),\hat{Z}(t,s)\mathalpha{+}\beta(t,s)\bigr)\\
	&\hspace{4.5cm}-f\bigl(t,s,v(s),\hat{X}(s),\hat{Y}(s),\hat{Z}(\tau,s)\mathalpha{+}\beta(\tau,s)\bigr)\bigr|\,ds\Bigr)^2\Bigr]\\
	&\hspace{0.8cm}+\sup_{t\in[\tau,\tau+\ep]}\mathbb{E}\Bigl[\Bigl(\int^{\tau+\ep}_t\bigl|f\bigl(t,s,v(s),\hat{X}(s),\hat{Y}(s),\hat{Z}(\tau,s)\mathalpha{+}\beta(\tau,s)\bigr)\\
	&\hspace{4.5cm}-f\bigl(\tau,s,v(s),\hat{X}(s),\hat{Y}(s),\hat{Z}(\tau,s)\mathalpha{+}\beta(\tau,s)\bigr)\bigr|\,ds\Bigr)^2\Bigr]\\
	&\hspace{0.8cm}+\sup_{t\in[\tau,\tau+\ep]}\mathbb{E}\Bigl[\Bigl(\int^{\tau+\ep}_t\bigl|f\bigl(\tau,s,v(s),\hat{X}(s),\hat{Y}(s),\hat{Z}(\tau,s)\mathalpha{+}\beta(\tau,s)\bigr)\bigr|\,ds\Bigr)^2\Bigr]\Bigr\}\displaybreak[1]\\
	&\leq \ep\,C\Bigl\{\sup_{t\in[\tau,\tau+\ep]}\mathbb{E}\Bigl[\int^T_{t_0}|\hat{Z}(t,s)-\hat{Z}(\tau,s)|^2\,ds+\sup_{s\in[t_0,T]}|p(t,s)-p(\tau,s)|^2\!\int^T_{t_0}|\delta\sigma(s)|^2\,ds\Bigr]\\
	&\hspace{1.5cm}+\rho(\ep)^2\mathbb{E}\Bigl[\int^T_{t_0}\bigl(1+|\hat{X}(s)|^2+|\hat{Y}(s)|^2+|\hat{Z}(\tau,s)\mathalpha{+}\beta(\tau,s)|^2\bigr)\,ds\Bigr]\\
	&\hspace{1.5cm}+\mathbb{E}\Bigl[\int^{\tau+\ep}_\tau\bigl|f\bigl(\tau,s,v(s),\hat{X}(s),\hat{Y}(s),\hat{Z}(\tau,s)\mathalpha{+}\beta(\tau,s)\bigr)\bigr|^2\,ds\Bigr]\Bigr\}
\end{align*}
for some constant $C>0$ which is independent of $\tau$ and $\ep$, and allowed to change from line to line. Therefore, we see that
\begin{equation*}
	\sup_{t\in[\tau,\tau+\ep]}\mathbb{E}\Bigl[\Bigl(\int^{\tau+\ep}_t\bigl|f\bigl(t,s,v(s),\hat{X}(s),\hat{Y}(s),\hat{Z}(t,s)\mathalpha{+}\beta(t,s)\bigr)\bigr|\,ds\Bigr)^2\Bigr]=o(\ep).
\end{equation*}
Similarly we can show that $\sup_{t\in[\tau,\tau+\ep]}\mathbb{E}\bigl[\bigl(\int^{\tau+\ep}_t|f(t,s)|\,ds\bigr)^2\bigr]=o(\ep)$. Thus, the estimate~\eqref{lemestimate 0} holds.
\par
Next, we prove the estimate~\eqref{lemestimate 1}. By the stability estimate~\eqref{BSVIE stability} of the difference of solutions of two BSVIEs, we see that
\begin{align*}
	&\sup_{t\in[\tau,\tau+\ep]}\mathbb{E}\Bigl[|\bar{Y}^\pert(t)-\check{Y}^\pert(t)|^2+\int^{\tau+\ep}_t|\bar{Z}^\pert(t,s)-\check{Z}^\pert(t,s)|^2\,ds\Bigr]\\
	&\leq C\sup_{t\in[\tau,\tau+\ep]}\mathbb{E}\Bigl[|\psi^\pert_3(t)|^2+\Bigl(\int^{\tau+\ep}_t|\tilde{f}^\pert_y(t,s)+\tilde{\mathfrak{f}}^\pert_y(t,s)|\,|\bar{Y}^\pert(s)|\,ds\Bigr)^2\\
	&\hspace{3cm}+\Bigl(\int^{\tau+\ep}_t|\tilde{f}^\pert_z(t,s)+\tilde{\mathfrak{f}}^\pert_z(t,s)|\,|\bar{Z}^\pert(t,s)|\,ds\Bigr)^2\Bigr]
\end{align*}
for some constant $C>0$ which is independent of $\tau$ and $\ep$. From the discussions in Section~\ref{section: proof of main result}, we know that $\sup_{t\in[\tau,\tau+\ep]}\mathbb{E}\bigl[|\psi^\pert_3(t)|^2\bigr]=o(\ep^2)$. Moreover, by using the estimate~\eqref{lemestimate 0}, we obtain
\begin{align*}
	&\sup_{t\in[\tau,\tau+\ep]}\mathbb{E}\Bigl[\Bigl(\int^{\tau+\ep}_t|\tilde{f}^\pert_y(t,s)+\tilde{\mathfrak{f}}^\pert_y(t,s)|\,|\bar{Y}^\pert(s)|\,ds\Bigr)^2\Bigr]\\
	&\leq9\|\partial_yf\|^2_\infty\ep^2\sup_{t\in[\tau,\tau+\ep]}\mathbb{E}\bigl[|\bar{Y}^\pert(t)|^2\bigr]=o(\ep^3)
\end{align*}
and
\begin{align*}
	&\sup_{t\in[\tau,\tau+\ep]}\mathbb{E}\Bigl[\Bigl(\int^{\tau+\ep}_t|\tilde{f}^\pert_z(t,s)+\tilde{\mathfrak{f}}^\pert_z(t,s)|\,|\bar{Z}^\pert(t,s)|\,ds\Bigr)^2\Bigr]\\
	&\leq9\|\partial_zf\|^2_\infty\ep\sup_{t\in[\tau,\tau+\ep]}\mathbb{E}\Bigl[\int^{\tau+\ep}_t|\bar{Z}^\pert(t,s)|^2\,ds\Bigr]=o(\ep^2).
\end{align*}
Consequently, we get the estimate~\eqref{lemestimate 1}.
\end{proof}

\par
\medskip
In order to prove the necessity part of Theorem~\ref{theorem: main result}, we need the following abstract lemma, which is a slight modification of Lemma~3.5 of \cite{a_Hu-Huang-Li_17}. We provide a complete proof here for the sake of self-containedness.


\begin{lemm}\label{lemma: A5}
Let $\varphi(\cdot)\in L^1_\mathbb{F}(S,T;\mathbb{R})$ with $0\leq S<T<\infty$ be fixed. Assume that
\begin{equation*}
	\liminf_{\ep\downarrow0}\frac{1}{\ep}\mathbb{E}\Bigl[\int^{t+\ep}_t\varphi(s)\,ds\,\xi_t\Bigr]\geq0,
\end{equation*}
for any $t\in[S,T)$ and any nonnegative, bounded and $\mathcal{F}_t$-measurable random variable $\xi_t$. Then it holds that $\varphi(s)\geq0$ for $\mathrm{Leb}_{[S,T]}\otimes\mathbb{P}$-a.e.\,$(s,\omega)\in[S,T]\times\Omega$.
\end{lemm}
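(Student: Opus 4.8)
The plan is to reduce the statement to a one–dimensional real–analysis fact about Dini derivatives, applied separately for each admissible test set $B\in\mathcal{F}_a$, and then to upgrade the resulting countably many ``$B$-by-$B$'' conclusions to a genuine $\mathrm{Leb}_{[S,T]}\otimes\mathbb{P}$-a.e.\ statement via a density argument that uses the separability and left-continuity of the (augmented) Brownian filtration.

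First I would fix $a\in[S,T)$ and $B\in\mathcal{F}_a$ and set $h_B(s):=\mathbb{E}[\varphi(s)\1_B]$ for $s\in[a,T]$. Since $\varphi\in L^1_\mathbb{F}(S,T;\mathbb{R})$, Tonelli's theorem gives $\int^T_a|h_B(s)|\,ds\le\mathbb{E}\int^T_a|\varphi(s)|\,ds<\infty$, so $h_B\in L^1(a,T;\mathbb{R})$, and by Fubini the function $F_B(t):=\mathbb{E}\bigl[\int^t_a\varphi(s)\,ds\,\1_B\bigr]=\int^t_ah_B(s)\,ds$ is absolutely continuous on $[a,T]$ with $F_B'=h_B$ a.e. Applying the hypothesis at each $t\in[a,T)$ with the nonnegative bounded $\mathcal{F}_t$-measurable random variable $\xi_t=\1_B$ (note $\mathcal{F}_a\subseteq\mathcal{F}_t$) yields $\liminf_{\ep\downarrow0}\tfrac{1}{\ep}\bigl(F_B(t+\ep)-F_B(t)\bigr)\ge0$ for \emph{every} $t\in[a,T)$, i.e.\ the lower right Dini derivative of $F_B$ is nonnegative everywhere on $[a,T)$. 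I would then invoke the classical theorem that a continuous function on an interval whose lower right Dini derivative is nonnegative at every point is nondecreasing (proof: perturb to $F_B(t)+\delta t$, which has strictly positive lower right Dini derivative; if it failed to be nondecreasing, a sup/connectedness argument on a level set of the perturbed function contradicts positivity of the Dini derivative at the boundary point; then let $\delta\downarrow0$). Hence $F_B$ is nondecreasing, so $h_B(s)=\mathbb{E}[\varphi(s)\1_B]\ge0$ for a.e.\ $s\in[a,T]$.

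For the upgrade I would choose rationals $\{a_j\}_{j\in\mathbb{N}}$ dense in $[S,T)$ and, for each $j$, a countable family $\mathcal{D}_j\subseteq\mathcal{F}_{a_j}$ that is dense in $(\mathcal{F}_{a_j},d)$ for the pseudmetric $d(A,B):=\mathbb{P}(A\triangle B)$; such a family exists because the augmented Brownian filtration has a separable measure algebra. Applying the previous paragraph to every pair $(a_j,B)$ with $B\in\mathcal{D}_j$, and also discarding the Lebesgue-null set of times $s$ at which $\varphi(s)\notin L^1_{\mathcal{F}_s}(\Omega;\mathbb{R})$, I obtain a Lebesgue-null set $N\subseteq[S,T]$ such that for every $s\in[S,T]\setminus N$ one has $\varphi(s)\in L^1$ and $\mathbb{E}[\varphi(s)\1_B]\ge0$ for all $j$ with $a_j<s$ and all $B\in\mathcal{D}_j$. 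Fixing such an $s$ with $s>S$ (the single point $S$ being negligible), the density of $\{a_j:a_j<s\}$ in $(S,s)$ together with left-continuity of the augmented Brownian filtration shows that $\bigcup_{j:a_j<s}\mathcal{F}_{a_j}$ generates $\mathcal{F}_s$ modulo $\mathbb{P}$-null sets, hence $\bigcup_{j:a_j<s}\mathcal{D}_j$ is $d$-dense in $\mathcal{F}_s$; and since $\varphi(s)\in L^1$, the map $B\mapsto\mathbb{E}[\varphi(s)\1_B]$ is $d$-continuous on $\mathcal{F}_s$. Therefore $\mathbb{E}[\varphi(s)\1_B]\ge0$ for \emph{all} $B\in\mathcal{F}_s$, and choosing $B=\{\varphi(s)<0\}\in\mathcal{F}_s$ gives $-\mathbb{E}[\varphi(s)^-]=\mathbb{E}[\varphi(s)\1_{\{\varphi(s)<0\}}]\ge0$, so $\varphi(s)\ge0$ a.s. As this holds for every $s\notin N$, the conclusion follows.

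I expect the main obstacle to be precisely the second part: passing from the ``$B$-by-$B$'' inequalities, whose exceptional $s$-set a priori depends on $B$, to a single Lebesgue-null set of times. This is where the probabilistic structure genuinely enters, through separability of $(\mathcal{F}_{a_j},d)$, left-continuity of the filtration, and the $d$-continuity of $B\mapsto\mathbb{E}[\varphi(s)\1_B]$ for fixed $s$. The Dini-derivative step is classical, but one must use that the hypothesis holds at \emph{every} $t\in[S,T)$ rather than merely at a.e.\ $t$, since it is the ``everywhere'' version of the monotonicity criterion that is being invoked.
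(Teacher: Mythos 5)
Your proposal is correct, but it takes a genuinely different route from the paper. The paper's proof applies the Lebesgue differentiation theorem for \emph{Bochner integrable} functions to the map $t\mapsto\varphi(t)\in L^1_{\mathcal{F}_T}(\Omega;\mathbb{R})$; this produces, in one stroke, a single Lebesgue-null set of times outside of which $\frac{1}{\ep}\int_t^{t+\ep}\|\varphi(s)-\varphi(t)\|_{L^1}\,ds\to0$, so that at each such $t$ one can test against \emph{all} nonnegative bounded $\mathcal{F}_t$-measurable $\xi_t$ simultaneously and conclude $\mathbb{E}[\varphi(t)\xi_t]\geq0$, whence $\varphi(t)\geq0$ a.s.\ by $\mathcal{F}_t$-measurability of $\varphi(t)$. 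In other words, the uniformity over test variables that you correctly identify as the main obstacle is obtained for free from the vector-valued differentiation theorem, and the paper's argument needs no structural hypothesis on the filtration beyond progressive measurability. Your argument replaces this with a scalar Dini-derivative monotonicity criterion applied set-by-set (for which, incidentally, ordinary scalar Lebesgue differentiation of $F_B$ would also suffice and would spare you the ``everywhere vs.\ a.e.'' care), and then recovers the uniformity by a separate density argument exploiting separability of the measure algebras $(\mathcal{F}_{a_j},d)$, left-continuity of the augmented Brownian filtration, and $d$-continuity of $B\mapsto\mathbb{E}[\varphi(s)\1_B]$. All of these ingredients are valid in the setting of the paper (the augmented Brownian filtration is continuous and countably generated modulo null sets), so your proof goes through; what it costs is reliance on these specific properties of $\mathbb{F}$, whereas the paper's proof is shorter and filtration-agnostic. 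What your approach buys is that the differentiation step is entirely elementary and real-variable, at the price of a more delicate probabilistic upgrade.
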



\begin{proof}
Since the map $[S,T]\ni t\mapsto \varphi(t)\in L^1_{\mathcal{F}_T}(\Omega;\mathbb{R})$ is Bochner integrable, by Lebesgue's differentiation theorem for Bochner integrable functions (cf.\ Theorem~3.8.5 of \cite{b_Hille-Phillips_57}), we have, for a.e.\ $t\in[S,T)$,
\begin{equation}\label{null set 1}
	\lim_{\ep\downarrow0}\frac{1}{\ep}\int^{t+\ep}_t\|\varphi(s)-\varphi(t)\|_{L^1_{\mathcal{F}_T}(\Omega;\mathbb{R})}\,ds=0.
\end{equation}
Take an arbitrary $t\in[S,T)$ satisfying \eqref{null set 1}. For any $\ep\in(0,T-t)$ and any nonnegative, bounded and $\mathcal{F}_t$-measurable random variable $\xi_t$, we have
\begin{equation*}
	\mathbb{E}\bigl[\varphi(t)\xi_t\bigr]=\frac{1}{\ep}\mathbb{E}\Bigl[\int^{t+\ep}_t\varphi(s)\,ds\,\xi_t\Bigr]-\frac{1}{\ep}\mathbb{E}\Bigl[\int^{t+\ep}_t\bigl(\varphi(s)-\varphi(t)\bigr)\,ds\,\xi_t\Bigr].
\end{equation*}
Note that
\begin{equation*}
	\Bigl|\frac{1}{\ep}\mathbb{E}\Bigl[\int^{t+\ep}_t\bigl(\varphi(s)-\varphi(t)\bigr)\,ds\,\xi_t\Bigr]\Bigr|\leq\|\xi_t\|_\infty\frac{1}{\ep}\int^{t+\ep}_t\|\varphi(s)-\varphi(t)\|_{L^1_{\mathcal{F}_T}(\Omega;\mathbb{R})}\,ds\overset{\ep\downarrow0}{\longrightarrow}0.
\end{equation*}
Furthermore, by the assumption, we have $\liminf_{\ep\downarrow0}\frac{1}{\ep}\mathbb{E}\Bigl[\int^{t+\ep}_t\varphi(s)\,ds\,\xi_t\Bigr]\geq0$. Thus, we see that $\mathbb{E}\bigl[\varphi(t)\xi_t\bigr]\geq0$. Since $\varphi(t)$ is $\mathcal{F}_t$-measurable, we get $\varphi(t)\geq0$ a.s. This completes the proof.
\end{proof}


\end{document}